\newtheorem{theorem}{Theorem}
\theoremstyle{plain}
\newtheorem{definition}{Definition}
\newtheorem{lemma}{Lemma}
\newtheorem{notation}{Notation}
\newtheorem{remark}{Remark}
\numberwithin{equation}{section}
\numberwithin{theorem}{section}
\numberwithin{lemma}{section}
\numberwithin{proposition}{section}
\numberwithin{corollary}{section}
\let\pdfoutput=\undefined\fi
\begin{document}
\title[Reaction-diffusion Equations on Complex Networks via $p$-Adic Analysis]{Reaction-diffusion Equations on Complex Networks and Turing Patterns, via
$p$-Adic Analysis}
\author{W. A. Z\'{u}\~{n}iga-Galindo}
\address{Centro de Investigaci\'{o}n y de Estudios Avanzados del Instituto
Polit\'{e}cnico Nacional\\
Departamento de Matem\'{a}ticas, Unidad Quer\'{e}taro\\
Libramiento Norponiente \#2000, Fracc. Real de Juriquilla. Santiago de
Quer\'{e}taro, Qro. 76230\\
M\'{e}xico.}
\email{wazuniga@math.cinvestav.edu.mx}
\thanks{The author was partially supported by Conacyt Grant No. 250845.}

\begin{abstract}
Nakao and Mikhailov proposed using continuous models (mean-field models) to
study reaction-diffusion systems on networks and the corresponding Turing
patterns. This work aims to show that $p$-adic analysis is the natural tool to
carry out this program. This is possible due to the fact that the discrete
Laplacian attached to a network turns out to be a particular case of a
$p$-adic Laplacian. By embedding the graph attached to the system into the
field of $p$-adic numbers, we construct a continuous $p$-adic version of a
reaction-diffusion system on a network. The existence and uniqueness of the
Cauchy problems for these systems are established. We show that Turing
criteria for the $p$-adic continuous reaction-diffusion system remain
essentially the same as in the classical case. However, the properties of the
emergent patterns are very different. The classical Turing patterns consisting
of alternating domains do not occur in the $p$-adic continuous case. Instead
of this, several domains (clusters) occur. Multistability, that is,
coexistence of a number of different patterns with the same parameter values
occurs. Clustering and multistability have been observed in the computer
simulations of large reaction-diffusion systems on networks. Such behavior can
be naturally explained in the $p$-adic continuous model, in a rigorous
mathematical way, but not in the original discrete model.

\end{abstract}
\keywords{Reaction-diffusion equations, complex \ networks, Turing patterns, $p$-adic
analysis, Markov processes.}
\subjclass{Primary: 35R02, 35K57, 47S10. Secondary: 05C82, 92C42.}
\maketitle

\section{Introduction}

Pattern-forming, reaction-diffusion systems in continuous media, are typically
described by a system of PDEs of the form%
\begin{equation}
\left\{
\begin{array}
[c]{cc}%
\frac{\partial u(x,t)}{\partial t}= & f\left(  u,v\right)  +\varepsilon\Delta
u(x,t)\\
& \\
\frac{\partial v(x,t)}{\partial t}= & g\left(  u,v\right)  +\varepsilon
d\Delta v(x,t),
\end{array}
\right.  \label{EQ_0}%
\end{equation}
where $x\in\mathbb{R}^{n}$, $t\geq0$, and $u(x,t)$, $v(x,t)$ are local
densities of two chemical species, the functions $f$ and $g$ specify the local
dynamics of $u$ and $v$, and $\varepsilon$, $\varepsilon d$ are the
corresponding diffusion coefficients. Typically $u$ corresponds to an
activator, which autocatalytically enhances its own production, and $v$ an
inhibitor that suppresses $u$. \ The system is initially considered to be at a
steady state\ $\left(  u_{0},v_{0}\right)  $ where $f\left(  u_{0}%
,v_{0}\right)  =g\left(  u_{0},v_{0}\right)  =0$. The Turing instability sets
up once the system is subjected to heterogeneous perturbations. If the
perturbation is homogeneous, the system will reach again the steady state
$(u_{0},v_{0})$. Moreover, one has to take care of the difference between a
Turing instability and the possibly resulting Turing patterns. Indeed, a
Turing instability could give rise to waves or even to another homogeneous
equilibrium, \cite{Turing}, \cite[Chapter 2]{Murray}. The existence of Turing
patterns requires that the parameter $d$ exceeds a certain threshold $d_{c}$.
This event drives to a spontaneous development of a spatial pattern formed by
alternating activator-rich and activator-poor patches. Turing instability in
activator-inhibitor systems establishes a paradigm of non-equilibrium
self-organization, which has been extensively studied for biological and
chemical processes.

In the 70s, Othmer and Scriven started the study of the Turing instability in
network-organized systems \cite{Othmer et al 1}-\cite{Othmer et al 2}. Since
then, reaction-diffusion models on networks have been studied intensively, see
e.g. \cite{Ambrosio et al}, \cite{Boccaletti et al}, \cite{Chung}, \cite{Ide},
\cite{Mocarlo}, \cite{Mugnolo}, \cite{Nakao-Mikhailov}, \cite{Othmer et al 1},
\cite{Othmer et al 2}, \cite{Slavova et al}, \cite{Van Mieghem}, \cite{von
Below}, \cite{Zhao}, and the references therein. In the discrete case, the
continuous media is replaced by a network (an unoriented graph $\mathcal{G}$,
which plays the role of discrete media) composed by $\#V(\mathcal{G})$
independent nodes (vertices) that interact via diffusive transport on
$\#E(\mathcal{G})$ links (edges). The analog of operator $\Delta$ is the
Laplacian of the graph $\mathcal{G}$, which is defined as%
\begin{equation}
\left[  L_{JI}\right]  _{J,I\in V(\mathcal{G})}=\left[  A_{JI}-\gamma
_{I}\delta_{JI}\right]  _{J,I\in V(\mathcal{G})}, \label{EQ_Matrix_L}%
\end{equation}
where $\left[  A_{JI}\right]  _{J,I\in V(\mathcal{G})}$ is the adjacency
matrix of $\mathcal{G}$ and $\gamma_{I}$ is the degree of $I$. The network
analogue of (\ref{EQ_0}) is
\begin{equation}
\left\{
\begin{array}
[c]{l}%
\frac{\partial u_{J}}{\partial t}=f(u_{J},v_{J})+\varepsilon%
{\textstyle\sum\limits_{I}}
L_{JI}u_{I}\\
\\
\frac{\partial v_{J}}{\partial t}=g(u_{J},v_{J})+\varepsilon d%
{\textstyle\sum\limits_{I}}
L_{JI}v_{I}.
\end{array}
\right.  \label{EQ_1}%
\end{equation}
In the last fifty years, Turing patterns produced by reaction-diffusion
systems on networks have been studied intensively, mainly by physicists,
biologists and engineers, see e.g. \cite{Boccaletti et al}, \cite{Ide},
\cite{Mocarlo}, \cite{Nakao-Mikhailov}, \cite{Othmer et al 1}, \cite{Othmer et
al 2}, \cite{Zhao}\ and the references therein. Nowadays, there is a large
amount of experimental results about the behavior of these systems, obtained
mainly via computer simulations using large random networks. The
investigations of the Turing patterns for large random networks have revealed
that, whereas the Turing criteria remain essentially the same, as in the
classical case, the properties of the emergent patterns are very different. In
\cite{Nakao-Mikhailov}, by using a physical argument, Nakao and Mikhailov
establish\ that Turing patterns with alternating domains cannot exist in the
network case, and only several domains (clusters) occur. Multistability, that
is, coexistence of a number of different patterns with the same parameter
values, is typically found and hysteresis phenomena are observed. They used
mean-field approximation to understand the Turing patterns when $d>d_{c}$, and
proposed that the mean-field approximation is the natural framework to
understand the peculiar behavior of the Turing patterns on networks.

The central goal of this work is to show the existence of a new $p$-adic
continuous version of the system (\ref{EQ_1}) which provides a very good
approximation of the original system. This is a consequence of the fact that
the discrete Laplacian attached to a network turns out to be a particular case
of a $p$-adic Laplacian. We establish from a mathematical perspective, that
the program of Nakao and Mikhailov proposed in \cite{Nakao-Mikhailov} for
understanding the peculiar behavior of the Turing patterns on networks using
mean-field approximations (i.e. a continuous version of (\ref{EQ_1})) is
possible, but it requires $p$-adic analysis. By `embedding the graph'
$\mathcal{G}$ into $\mathbb{Q}_{p}$, the field of $p$-adic numbers, we
construct a family of continuous $p$-adic versions of the system (\ref{EQ_1}),
which can be studied rigorously by using the classical semigroup theory, see
e.g. \cite{Milan}, \cite{Pazy}, in this way, we are able to study the original
system (\ref{EQ_1}) and to obtain a new $p$-adic continuous version of it,
which corresponds to a `mean-field approximation' of the original system
(\ref{EQ_1}).

From now on $p$ denotes a fixed prime number. A $p$-adic number is a series of
the form%
\begin{equation}
x=x_{-k}p^{-k}+x_{-k+1}p^{-k+1}+\ldots+x_{0}+x_{1}p+\ldots,\text{ with }%
x_{-k}\neq0\text{,} \label{p-adic-number}%
\end{equation}
where the $x_{j}$s \ are $p$-adic digits, i.e. numbers in the set $\left\{
0,1,\ldots,p-1\right\}  $. The set of all possible series of the form
(\ref{p-adic-number}) constitutes the field of $p$-adic numbers $\mathbb{Q}%
_{p}$. There are natural field operations, sum and multiplication, on series
of the form (\ref{p-adic-number}), see e.g. \cite{Koblitz}. There is also a
natural norm in $\mathbb{Q}_{p}$ defined as $\left\vert x\right\vert
_{p}=p^{k}$, for a nonzero $p$-adic number of the form (\ref{p-adic-number}).
The field of $p$-adic numbers with the distance induced by $\left\vert
\cdot\right\vert _{p}$ is a complete ultrametric space. The ultrametric
property refers to the fact that $\left\vert x-y\right\vert _{p}\leq
\max\left\{  \left\vert x-z\right\vert _{p},\left\vert z-y\right\vert
_{p}\right\}  $ for any $x$, $y$, $z\in\mathbb{Q}_{p}$. We denote by
$\mathbb{Z}_{p}$ the unit ball, which consists of all series with expansions
of the form (\ref{p-adic-number}) with $k\geq0$.

We identify each vertex of $\mathcal{G}$ with a $p$-adic number of the form
\begin{equation}
I=I_{0}+I_{1}p+\ldots+I_{N-1}p^{N-1}, \label{p_adic_integer}%
\end{equation}
where the $I_{j}$s are $p$-adic digits. The parameter $N$ is fixed along the
article. We denote by $G_{N}^{0}$ the set of all $p$-adic integers of the form
(\ref{p_adic_integer}) which correspond to the vertices of $\mathcal{G}$. In
this way, we construct a $p$-adic parametrization of the adjacency matrix of
$\mathcal{G}$. Our constructions depend only on the adjacency matrix of the
graph $\mathcal{G}$, for this reason, the tree-like structure of
$\mathbb{Q}_{p}$ does not play any role in our construction.

We denote by $\Omega\left(  p^{N}\left\vert x-I\right\vert _{p}\right)  $ the
characteristic function of the ball centered at $I$ with radius $p^{-N}$,
which corresponds to the set $I+p^{N}\mathbb{Z}_{p}$. Now, we attach to
$\mathcal{G}$ the open compact subset \ $\mathcal{K}_{N}$ defined as the
disjoint union of the balls $I+p^{N}\mathbb{Z}_{p}$ for $I\in G_{N}^{0}$,
\ and a finite dimensional real vector space $X_{N}$ generated by the
functions $\left\{  \Omega\left(  p^{N}\left\vert x-I\right\vert _{p}\right)
\right\}  _{I\in G_{N}^{0}}$. This is the space of continuous functions on
$\mathcal{G}$. There exists a kernel $J_{N}(x,y)$, which is a linear
combination of functions of type $\Omega\left(  p^{N}\left\vert x-I\right\vert
_{p}\right)  \Omega\left(  p^{N}\left\vert y-J\right\vert _{p}\right)  $, $I$,
$J\in G_{N}^{0}$, such that the operator $\boldsymbol{L}_{N}:X_{N}\rightarrow
X_{N}$ is defined as
\begin{equation}
\boldsymbol{L}_{N}\varphi\left(  x\right)  =%
{\textstyle\int\limits_{\mathcal{K}_{N}}}
\left(  \varphi\left(  y\right)  -\varphi\left(  x\right)  \right)
J_{N}(x,y)dy, \label{Operator_L}%
\end{equation}
where $dy$ denotes the normalized Haar measure of the locally compact group
$(\mathbb{Q}_{p},+)$, is represented by the matrix $\left[  L_{JI}\right]
_{J,I\in G_{N}^{0}}$, see (\ref{EQ_Matrix_L}).

The space $X_{N}$ (endowed with the supremum norm) plays the role of a mesh,
which can be refined as much as we want. Given $M>N$, we can subdivide each
ball $I+p^{N}\mathbb{Z}_{p}$, with $I\in G_{N}^{0}$, into $p^{M-N}$ disjoint
balls $I_{j}+p^{M}\mathbb{Z}_{p}$. In this way we construct new functions of
type $\sum_{I_{j}}c_{I_{j}}\Omega\left(  p^{M}\left\vert x-I_{j}\right\vert
_{p}\right)  $, which form an $\mathbb{R}$-vector space, denoted as $X_{M}$,
of dimension $p^{M-N}\left(  \#G_{N}^{0},\right)  $. We endow $X_{M}$ with the
supremum norm. Then\ $X_{N}$ is continuously embedded, as a Banach space, into
$X_{M}$. Furthermore, operator $\boldsymbol{L}_{N}$ has a natural extension
$\boldsymbol{L}_{M}$ to $X_{M}$ given by the right-hand side of formula
(\ref{Operator_L}).

We set $X_{\infty}$ for the vector space of real-valued, continuous functions
on $\mathcal{K}_{N}$, endowed with the supremum norm. Then \ $X_{M}$ is
continuously embedded, as a Banach space, into $X_{\infty}$, and $\cup_{M\geq
N}X_{M}$ is dense in $X_{\infty}$. Furthermore, operator $\boldsymbol{L}_{M}$
has an extension\ $\boldsymbol{L}$\ to $X_{\infty}$ given by the right-hand
side of formula (\ref{Operator_L}), which is a linear bounded operator. In
this way on each $X_{\bullet}$, we have an operator $\boldsymbol{L}_{\bullet}%
$, where the dot means $N$, $M$ with $M>N$ or $\infty$, and a continuous
version of the system (\ref{EQ_1}):%

\begin{equation}
\left\{
\begin{array}
[c]{l}%
\frac{\partial}{\partial t}\left[
\begin{array}
[c]{l}%
u^{\left(  \bullet\right)  }\left(  t\right) \\
\\
v^{\left(  \bullet\right)  }\left(  t\right)
\end{array}
\right]  =\left[
\begin{array}
[c]{l}%
f(u^{\left(  \bullet\right)  }\left(  t\right)  ,v^{\left(  \bullet\right)
}\left(  t\right)  )\\
\\
g(u^{\left(  \bullet\right)  }\left(  t\right)  ,v^{\left(  \bullet\right)
}\left(  t\right)  )
\end{array}
\right]  +\left[
\begin{array}
[c]{l}%
\varepsilon\boldsymbol{L}_{\bullet}u^{\left(  \bullet\right)  }\left(
t\right) \\
\\
\varepsilon d\boldsymbol{L}_{\bullet}v^{\left(  \bullet\right)  }\left(
t\right)
\end{array}
\right]  ,\\
\\
t\in\left[  0,\tau\right)  \text{, }x\in\mathcal{K}_{N}.
\end{array}
\right.  \label{EQ_System_2}%
\end{equation}
We study the Cauchy problem attached to (\ref{EQ_System_2}), when the initial
datum belongs to a sufficiently small open set containing a steady
state\ $\left(  u_{0},v_{0}\right)  $ where $f\left(  u_{0},v_{0}\right)
=g\left(  u_{0},v_{0}\right)  =0$, assuming that $\nabla f\left(  x\right)
\neq0$ and $\nabla g\left(  x\right)  \neq0$ for $x$ sufficiently close to
$\left(  u_{0},v_{0}\right)  \in\mathbb{R}^{2}$. Under these hypotheses we
establish that (simultaneously) all the Cauchy problems attached to
(\ref{EQ_System_2}) have a unique solution, with the same maximal interval of
existence, see Theorem \ref{Theorem2} and Lemma \ref{Lemma_mild_sol}. By using
standard techniques based on semigroup theory, one shows the uniqueness of all
the initial value problems (\ref{EQ_System_2}), as well as the continuous
dependence of the initial conditions. In Section \ref{Section_examples}, we
study the Cauchy problem attached to the Brusselator\ and the CIMA reaction.

In the case $X_{\infty}$, we call the system (\ref{EQ_System_2}) the
\textit{mean-field model} (or approximation) of the original system
(\ref{EQ_1}). For $M$ sufficiently large, the solution of the Cauchy problem
attached to the mean-field model is arbitrarily close to the solution of
system (\ref{EQ_System_2}) in $X_{M}$, see Theorem \ref{Theorem3}.

The matrix $\mathbb{A}^{\left(  M\right)  }$ of the operator $\boldsymbol{L}%
_{M}$ acting on $X_{M}$ (after renaming the elements of the basis of $X_{M}%
$)\ is a diagonal-type\ matrix of the form%
\[
\mathbb{A}^{\left(  M\right)  }=\left[
\begin{array}
[c]{ccccc}%
\mathbb{A}_{N;M} &  &  &  & \\
& \ddots &  &  & \\
&  & \mathbb{A}_{N;M} &  & \\
&  &  & \ddots & \\
&  &  &  & \mathbb{A}_{N;M}%
\end{array}
\right]  _{p^{M-N}\times p^{M-N}},
\]
where
\[
\mathbb{A}_{N;M}=\left[  p^{N-M}A_{JI}-\gamma_{I}\delta_{JI}\right]  _{J,I\in
G_{N}^{0}}=p^{N-M}\left[  A_{JI}-p^{M-N}\gamma_{I}\delta_{JI}\right]  _{J,I\in
G_{N}^{0}},
\]
see Lemma \ref{Lemma3}. The matrix $\mathbb{A}^{\left(  M\right)  }$, which
corresponds to a network constructed by using $p^{M-N}$ \textit{replicas of
the original network}, each of these replicas\ corresponds to a network having
a diffusion operator of type $\mathbb{A}_{N;M}$ and the corresponding $p$-adic
diffusion equation is
\begin{equation}
\frac{\partial f^{\left(  N\right)  }\left(  x,t\right)  }{\partial
t}=\varepsilon^{\prime}\boldsymbol{L}_{N,\lambda}f^{\left(  N\right)  }\left(
x,t\right)  \label{System_X_N}%
\end{equation}
where $\varepsilon^{\prime}=p^{N-M}\varepsilon$, $\lambda=p^{M-N}$, and
$\boldsymbol{L}_{N,\lambda}:X_{N}\rightarrow X_{N}$ is defined as
\[
L_{N,\lambda}\varphi\left(  x\right)  =\int\limits_{\mathcal{K}_{N}}\left\{
\varphi(y)-\lambda\varphi\left(  x\right)  \right\}  J_{N}\left(  x,y\right)
dy.
\]
The equations of type (\ref{System_X_N}) form a parametric family indexed by
$\left(  \varepsilon^{\prime},\lambda\right)  $, which is invariant under a
scale change of type $\left(  \varepsilon^{\prime},\lambda\right)
\rightarrow\left(  \delta\varepsilon^{\prime},\lambda\delta^{-1}\right)  $,
for $\delta\in\left(  0,1\right)  $, see Theorems \ref{Theorem1_fractal},
\ref{Theorem1_fractal_A}.

In conclusion, the mean-field approximation is the `limit' of the system
(\ref{EQ_System_2}) in $X_{M}$ when $M$ tends to infinity. In turn, any
solution of the system (\ref{EQ_System_2}) in $X_{M}$ ($M>N$) is made of
$p^{M-N}$ solutions of $p^{M-N}$ systems of type (\ref{EQ_System_2}) in
$X_{N}$. Each of them is a scaled version (a scaled replica) of the original
system (\ref{EQ_1}).

In order to understand the `physical contents' of the mean-field model, it is
completely necessary to study its diffusion mechanism, which means to study
the following Cauchy problem:%
\begin{equation}
\left\{
\begin{array}
[c]{l}%
\frac{\partial f\left(  x,t\right)  }{\partial t}=\varepsilon\boldsymbol{L}%
f\left(  x,t\right)  \text{, \ }x\in\mathcal{K}_{N}\text{, }t>0\\
\\
f\left(  x,0\right)  =f_{0}(x)\in{\large X}_{\infty}.
\end{array}
\right.  \label{EQ_Cauchy_Heat_eq}%
\end{equation}
The semigroup attached to (\ref{EQ_Cauchy_Heat_eq}), $\left\{  e^{\varepsilon
t\boldsymbol{L}}\right\}  _{t\geq0}$, is a Feller semigroup, and consequently,
there is a Markov process attached to (\ref{EQ_Cauchy_Heat_eq}), see Theorem
\ref{Theorem1}. This implies that in the mean-field model, the chemical
species $u$, $v$ interact via a random walk like in the classical case
(\ref{EQ_0}): the chemical reactions involving species $u$, $v$ occur as a
consequence of a random walk of the particles forming them. This random walk
is produced by changes in the concentrations, which can be deterministically
modeled by Fick's law of diffusion. The whole picture is coded as a $p$-adic
heat equation. A similar result holds in $X_{M}$\ for the equations
(\ref{EQ_Cauchy_Heat_eq}), see Theorem \ref{Theorem1_A}, and for the equations
of type (\ref{System_X_N}), see Sections \ref{Sect_selfsimilarity_1}\ and
\ref{Section_selsimilarity_XM}. It is important to mention here that the
$p$-adic diffusion has strong differences with respect to the classical
diffusion in $\mathbb{R}$.

There are infinitely many choices for the kernel $J_{N}\left(  x,y\right)  $
such that the corresponding operator $\boldsymbol{L}$ is a diffusion operator,
i.e. such that (\ref{EQ_Cauchy_Heat_eq}) is a heat equation, see Theorem
\ref{Theorem1} and the comments\ following it. Consequently in $X_{\infty}$
there are infinitely many reaction-diffusion systems `similar' to (\ref{EQ_1}).

The $p$-adic heat equations, which include equations of types
(\ref{System_X_N}) and (\ref{EQ_Cauchy_Heat_eq}), and their associated Markov
processes has been studied intensively in the last thirty years due to
physical motivations, see e.g. \cite{Av-4}, \cite{Av-5}, \cite{Bendiko et al},
\cite{Bendikov}, \cite{KKZuniga}, \cite{Koch}, \cite{Kozyrev SV},
\cite{To-Zuniga-1}, \cite{To-Zuniga-2}, \cite{V-V-Z}, \cite{Zuniga-LNM-2016}%
\ and the references therein. A central paradigm in physics of complex systems
(for instance proteins) asserts that the dynamics of such systems can be
modeled as a random walk in the energy landscape of the system, see e.g.
\cite{Fraunfelder et al}, \cite{Kozyrev SV}, and the references therein.
Typically the energy landscape is approximated by an ultrametric space (a tree
called the disconnectivity graph of the energy landscape) and a function on
this space describing the distribution of the activation barriers, see e.g.
\cite{Becker et al}. Then the dynamics is described by a discrete system of
ODEs on a tree, see e.g. \cite{Becker et al}, \cite{KKZuniga}, \cite{Kozyrev
SV}, and the references therein.

Avetisov, Kozyrev et al. discovered that under suitable physical and
mathematical hypotheses, the above-mentioned discrete system of ODEs on a tree
has a `continuous $p$-adic limit' (a mean-field model) as a $p$-adic diffusion
equation, see \cite{Av-4}, \cite{Av-5}, \cite{KKZuniga}. From a mathematical
perspective, the original system of ODEs on a tree is a good discretization of
the corresponding $p$-adic diffusion equation, see \cite{Zuniga-Nonlinear}.
The diffusion in the systems of type (\ref{EQ_System_2}) is $p$-adic
diffusion. Our results on $p$-adic diffusion equations are new. Indeed, they
correspond to the so-called `degenerate energy landscapes' in the terminology
of Avetisov, Kozyrev et al. In the case in which the kernel $J_{N}$\ has the
form $J(\left\vert x-y\right\vert _{p})$, the corresponding diffusion
equations were studied in \cite{To-Zuniga-1}. Operators similar to
$\boldsymbol{L}$ have been also studied by Bendikov \cite{Bendikov},
\cite{Bendikov-Pote} and Kozyrev \cite{Kozyrev-SB}.

Recently, $p$-adic nonlinear reaction-diffusion equations connected with
physical models involving tree-like graphs have been studied intensively,\ see
e.g. \cite{Antoniuk et al. 1}, \cite{Antoniuk et al.}, \cite{Khrennikov et al
1}, \cite{Khrennikov et al}, \cite{Kh-Kochubei}, \cite{OL-Kh-1},
\cite{OL-Kh-2}, \cite{Zuniga-JPA}, \cite{Zuniga-Nonlinear}. An important
novelty is that the $p$-adic reaction-diffusion equations presented here are
connected with arbitrary (non-oriented, simple) graphs, which \ means that we
are not restricted to tree-like graphs. Since this is a central idea, we
discuss it briefly. The dynamics of complex energy landscapes can be
approximated by a system of kinetic equations which describe the transitions
between local minima energy. The system of kinetic equations has the form%
\begin{equation}
\frac{d}{dt}f_{i}(t)=-\sum_{j}Q_{i,j}\left\{  f_{i}(t)-f_{j}(t)\right\}  ,
\label{Eq_System_1}%
\end{equation}
where $t\in\mathbb{R}$ is time, $f_{i}(t)$ is the population of the $i$-th
local minima, $Q=\left[  Q_{i,j}\right]  $ is the matrix of transition rates.
The matrix $Q$ has a special bock structure that can be understood using
ultrametrics. The matrix $Q$ admits a $p$-adic parametrization, i.e.
$Q_{i,j}=q\left(  \left\vert l(i)-l(j)\right\vert _{p}\right)  $ with
$l\left(  i\right)  \in p^{m}\mathbb{Z}_{p}/p^{n}\mathbb{Z}_{p}$, $n\geq m$.
By using this $p$-adic parametrization the system (\ref{Eq_System_1}) takes
the form%
\[
\frac{d}{dt}f_{I}(t)=-\sum_{J\in p^{m}\mathbb{Z}_{p}/p^{n}\mathbb{Z}_{p}%
}q\left(  \left\vert I-J\right\vert _{p}\right)  \left\{  f_{I}(t)-f_{J}%
(t)\right\}  p^{-n},
\]
where $I$, $J\in p^{m}\mathbb{Z}_{p}/p^{n}\mathbb{Z}_{p}$. In the limit
$m\rightarrow-\infty$, $n\rightarrow\infty$, the above system becomes%
\begin{equation}
\frac{\partial}{\partial t}f\left(  x,t\right)  =\int\limits_{\mathbb{Q}_{p}%
}q\left(  \left\vert x-y\right\vert _{p}\right)  \left\{
f(x,t)-f(y,t)\right\}  dy, \label{Eq_System_1A}%
\end{equation}
where $x$, $y\in\mathbb{Q}_{p}$, $t\in\mathbb{R}$, which is a $p$-adic
diffusion equation. In the above-mentioned limit process the size of the
matrix $Q$ changes. In the limit, the kernel $q$ in (\ref{Eq_System_1A})
corresponds to an infinite version of the matrix $Q$.

In this article the adjacency matrix $\left[  A_{JI}\right]  _{J,I\in
G_{N}^{0}}$ of a (non-oriented, simple) graph plays the role of the matrix
$Q$. The adjacency matrix is fixed, in particular the parameter $N$ is fixed.
With the matrix $\left[  A_{JI}\right]  _{J,I\in G_{N}^{0}}$ we construct a
kernel $J_{N}(x,y)$, and an operator $\boldsymbol{L}_{N}$, see
(\ref{Operator_L}). The kernel $J_{N}$ is the analog of $q$, but this
kernel\ is not obtained by a limit process starting with the adjacency matrix.
Initially, the operator $\boldsymbol{L}_{N}$ is defined in a\ finite
dimensional $\mathbb{R}$-space $X_{N}$ spanned by the characteristic functions
$\left\{  \Omega\left(  p^{N}\left\vert x-I\right\vert _{p}\right)  \right\}
_{I\in G_{N}^{0}}$. By `refining' the space $X_{N}$, we obtain new function
spaces and extensions of the operator $\boldsymbol{L}_{N}$, but all these
extensions have the same kernel, $J_{N}(x,y)$. Then the $p$-adic heat equation
introduced here are obtained by a limit process on the spaces $X_{N}$, $N\geq
M$, using a fixed kernel $J_{N}(x,y)$. In the previous paragraph, a new
$p$-adic diffusion operator, see (\ref{Eq_System_1A}), is constructed by a
limit process starting with a matrix $Q$. Here, we show that the Laplacian of
a graph is a $p$-adic integral operator, which admits an extension to the
space of continuous functions supported in a suitable open compact subset of
$\mathbb{Q}_{p}$.

Another goal of this work is to study the formation of Turing patterns in the
reaction-diffusion systems of type (\ref{EQ_System_2}).\ The novelty here is
the possibility of studying Turing patterns of reaction-diffusion systems on
networks by using $p$-adic continuous reaction-diffusion equations. To achieve
this goal, it is necessary to understand the spectra of operators $\left[
L_{IJ}\right]  _{I,J\in G_{N}^{0}}$, $\boldsymbol{L}_{N}$, $\boldsymbol{L}%
_{M}$ with $M>N$, $\boldsymbol{L}$. The spectrum of the graph Laplacian matrix
$\left[  L_{JI}\right]  _{J,I\in G_{N}^{0}}$ is well-understood, see e.g.
\cite{Van Mieghem}. The operator $\boldsymbol{L}$ has unique compact extension
$\boldsymbol{L}:L^{2}(\mathcal{K}_{N},\mathbb{C})\rightarrow L^{2}%
(\mathcal{K}_{N},\mathbb{C})$. Furthermore, the space $L^{2}(\mathcal{K}%
_{N},\mathbb{C})$ has an orthonormal basis formed by eigenfunctions of
operator $\boldsymbol{L}$, see Theorem \ref{Theorem4}.

In $X_{\bullet}$ the Turing instability criteria can be established using the
classical argument, see Theorem \ref{Theorem5}, Remark \ref{Nota_Theorem5},
and \cite[Chapter 2]{Murray}. In $X_{\infty}$, the Turing pattern has the form%
\begin{gather}
\sum\limits_{\kappa_{1}<\kappa<\kappa_{2}}\sum\limits_{I}A_{I\kappa}e^{\lambda
t}\Omega\left(  p^{N}\left\vert x-I\right\vert _{p}\right) \nonumber\\
+\sum\limits_{\kappa_{1}<\kappa<\kappa_{2}}\text{ }\sum\limits_{I}%
{\displaystyle\sum\limits_{rnj}}
B_{rnj}e^{\lambda t}p^{\frac{r}{2}}\cos\left(  \left\{  p^{-r-1}jx\right\}
_{p}\right)  \Omega\left(  \left\vert p^{r}x-n\right\vert _{p}\right)
\nonumber\\
+\sum\limits_{\kappa_{1}<\kappa<\kappa_{2}}\sum\limits_{I}%
{\displaystyle\sum\limits_{rnj}}
C_{rnj}e^{\lambda t}p^{\frac{r}{2}}\sin\left(  \left\{  p^{-r-1}jx\right\}
_{p}\right)  \Omega\left(  \left\vert p^{r}x-n\right\vert _{p}\right)  \text{
} \label{Turing_Pattern}%
\end{gather}
for $t\rightarrow+\infty$, where $\kappa$ runs through unstable modes, and
$\lambda=\lambda\left(  \kappa\right)  $, $r=r(I,\kappa)$, $n=n(I,\kappa)$,
and the ball $p^{-r}n+$ $p^{-r}\mathbb{Z}_{p}\subset\mathcal{K}_{N}$. In the
case $X_{M}$, with $M\geq N$, the Turing pattern does not contain the terms
involving sine and cosine functions. On the other hand, in the results
reported in the literature for the Turing patterns on networks the pattern is
described as $\sum A_{I}e^{\lambda t}\varphi_{I}$, where $\varphi_{I}$ is the
eigenfunction corresponding to $\mu_{I}$, see e.g. \cite{Nakao-Mikhailov}. Our
results, see Theorems \ref{Theorem2} and \ref{Theorem5}, show that Turing
criteria remain essentially the same as in the classical case. If the Turing
instability criteria holds true in $X_{M}$ for some $M\geq N$, then it is true
in $X_{\infty}$, but the converse is not true, see \ref{Theorem5} and Section
\ref{Section_complete_graphs}.

By identifying the ball $I+p^{N}\mathbb{Z}_{p}$ with a cluster, we have that
the Turing pattern (\ref{Turing_Pattern}) is organized in a finite number of
\ disjoint clusters, each of them supporting a stationary pattern. All these
patterns are controlled by the same kinetic parameters. Notice that the
occurrence of clusters in the Turing patterns is a direct consequence of the
hierarchical structure of $\mathbb{Q}_{p}$: every ball is a finite disjoint
union of balls of smaller radii. Notice that this corresponds exactly with the
qualitative description of the Turing patterns on networks given by Nakao and
Mikhailov \cite{Nakao-Mikhailov}\ in terms of clustering and multistability.
We have not found in the current literature results explaining (in\ a rigorous
mathematical way) the qualitative description of the Turing patterns on
networks given by Nakao and Mikhailov.

Reaction-diffusion systems on networks result to be ordinary differential
equations, under the assumption of Fickian diffusion, and thus standard
(local) existence and unicity conditions do apply. This is enough to prove the
onset of Turing instability, see e.g. \cite[Chapters 6, 7]{Perthame},
\cite{Chung}. On the other hand, the study of the $p$-adic continuous
reaction-diffusion systems on networks and its approximations requires the use
of abstract semilinear evolution equations. The study of traveling waves in
the $p$-adic continuous reaction-diffusion systems introduced here is an open
problem. The classical techniques cannot be applied because the time is a real
variable and the position is a $p$-adic number. Nowadays the study of
differential equations on graphs is a relevant mathematical matter, see e.g.
\cite{Berkolaiko} \cite{Mugnolo}.

In conclusion, we can say that from the perspective of the proposal of Nakao
and Mikhailov \cite{Nakao-Mikhailov} of studying reaction-diffusion systems on
networks\ and the corresponding Turing patterns using mean-field models, the
$p$-adic analysis results to be the natural tool for such studies. The key
point is that the clustering and multistability observed in the computer
simulations are naturally explained in the $p$-adic continuous model, and to
the best of our understanding, these properties cannot be mathematically
explained in the original discrete model.\ Of course, a good discretization of
the continuous model drives us to a classical reaction-diffusion system on a network.

\section{\label{Fourier Analysis}Basic facts on $p$-adic analysis}

In this section we collect some basic results about $p$-adic analysis that
will be used in the article. For an in-depth review of the $p$-adic analysis
the reader may consult \cite{Alberio et al}, \cite{Taibleson}, \cite{V-V-Z}.

\subsection{The field of $p$-adic numbers}

Along this article $p$ will denote a prime number. The field of $p-$adic
numbers $%
\mathbb{Q}
_{p}$ is defined as the completion of the field of rational numbers
$\mathbb{Q}$ with respect to the $p-$adic norm $|\cdot|_{p}$, which is defined
as
\[
\left\vert x\right\vert _{p}=\left\{
\begin{array}
[c]{lll}%
0 & \text{if} & x=0\\
&  & \\
p^{-\gamma} & \text{if} & x=p^{\gamma}\frac{a}{b}\text{,}%
\end{array}
\right.
\]
where $a$ and $b$ are integers coprime with $p$. The integer $\gamma:=ord(x)
$, with $ord(0):=+\infty$, is called the\textit{\ }$p-$\textit{adic order of}
$x$.

Any $p-$adic number $x\neq0$ has a unique expansion of the form
\[
x=p^{ord(x)}\sum_{j=0}^{\infty}x_{j}p^{j},
\]
where $x_{j}\in\{0,\dots,p-1\}$ and $x_{0}\neq0$. By using this expansion, we
define \textit{the fractional part of }$x\in\mathbb{Q}_{p}$, denoted as
$\{x\}_{p}$, to be the rational number
\[
\left\{  x\right\}  _{p}=\left\{
\begin{array}
[c]{lll}%
0 & \text{if} & x=0\text{ or }ord(x)\geq0\\
&  & \\
p^{ord(x)}\sum_{j=0}^{-ord_{p}(x)-1}x_{j}p^{j} & \text{if} & ord(x)<0.
\end{array}
\right.
\]
In addition, any non-zero $p-$adic number can be represented uniquely as
$x=p^{ord(x)}ac\left(  x\right)  $ where $ac\left(  x\right)  =\sum
_{j=0}^{\infty}x_{j}p^{j}$, $x_{0}\neq0$, is called the \textit{angular
component} of $x$. Notice that $\left\vert ac\left(  x\right)  \right\vert
_{p}=1$.

For $r\in\mathbb{Z}$, denote by $B_{r}(a)=\{x\in%
\mathbb{Q}
_{p};\left\vert x-a\right\vert _{p}\leq p^{r}\}$ \textit{the ball of radius
}$p^{r}$ \textit{with center at} $a\in%
\mathbb{Q}
_{p}$, and take $B_{r}(0):=B_{r}$. The ball $B_{0}$ equals $\mathbb{Z}_{p}$,
\textit{the ring of }$p-$\textit{adic integers of }$%
\mathbb{Q}
_{p}$. We also denote by $S_{r}(a)=\{x\in\mathbb{Q}_{p};|x-a|_{p}=p^{r}\}$
\textit{the sphere of radius }$p^{r}$ \textit{with center at} $a\in%
\mathbb{Q}
_{p}$, and take $S_{r}(0):=S_{r}$. We notice that $S_{0}=\mathbb{Z}%
_{p}^{\times}$ (the group of units of $\mathbb{Z}_{p}$). The balls and spheres
are both open and closed subsets in $%
\mathbb{Q}
_{p}$. In addition, two balls in $%
\mathbb{Q}
_{p}$ are either disjoint or one is contained in the other.

The metric space $\left(
\mathbb{Q}
_{p},\left\vert \cdot\right\vert _{p}\right)  $ is a complete ultrametric
space. As a topological space $\left(
\mathbb{Q}
_{p},|\cdot|_{p}\right)  $ is totally disconnected, i.e. the only connected
subsets of $%
\mathbb{Q}
_{p}$ are the empty set and the points. In addition, $\mathbb{Q}_{p}$\ is
homeomorphic to a Cantor-like subset of the real line, see e.g. \cite{Alberio
et al}, \cite{V-V-Z}. A subset of $\mathbb{Q}_{p}$ is compact if and only if
it is closed and bounded in $\mathbb{Q}_{p}$, see e.g. \cite[Section
1.3]{V-V-Z}, or \cite[Section 1.8]{Alberio et al}. The balls and spheres are
compact subsets. Thus $\left(
\mathbb{Q}
_{p},|\cdot|_{p}\right)  $ is a locally compact topological space.

\begin{notation}
We will use $\Omega\left(  p^{-r}|x-a|_{p}\right)  $ to denote the
characteristic function of the ball $B_{r}(a)$.
\end{notation}

\subsection{Some function spaces}

A complex-valued function $\varphi$ defined on $%
\mathbb{Q}
_{p}$ is \textit{called locally constant} if for any $x\in%
\mathbb{Q}
_{p}$ there exist an integer $l(x)\in\mathbb{Z}$ such that
\begin{equation}
\varphi(x+x^{\prime})=\varphi(x)\text{ for any }x^{\prime}\in B_{l(x)}.
\label{local_constancy_parameter}%
\end{equation}
\ A function $\varphi:%
\mathbb{Q}
_{p}\rightarrow\mathbb{C}$ is called a \textit{Bruhat-Schwartz function (or a
test function)} if it is locally constant with compact support. In this case,
we can take $l=l(\varphi)$ in (\ref{local_constancy_parameter}) independent of
$x$, the largest of such integers is called \textit{the parameter of local
constancy} of $\varphi$. The $\mathbb{C}$-vector space of Bruhat-Schwartz
functions is denoted by $\mathcal{D}(%
\mathbb{Q}
_{p},\mathbb{C})$. We will denote by $\mathcal{D}(%
\mathbb{Q}
_{p},\mathbb{R})$, the $\mathbb{R}$-vector space of test functions.

Since $(\mathbb{Q}_{p},+)$ is a locally compact topological group, there
exists a Borel measure $dx$, called the Haar measure of $(\mathbb{Q}_{p},+)$,
unique up to multiplication by a positive constant, such that $\int_{U}dx>0 $
for every non-empty Borel open set $U\subset\mathbb{Q}_{p}$, and satisfying
$\int_{E+z}dx=\int_{E}dx$ for every Borel set $E\subset\mathbb{Q}_{p}$, see
e.g. \cite[Chapter XI]{Halmos}. If we normalize this measure by the condition
$\int_{\mathbb{Z}_{p}}dx=1$, then $dx$ is unique. From now on we denote by
$dx$ the normalized Haar measure of $(\mathbb{Q}_{p},+)$.

Given $\rho\in\lbrack0,\infty)$ and an open subset $U\subset%
\mathbb{Q}
_{p}$, we denote by $L^{\rho}\left(  U,%
\mathbb{C}
\right)  $ the $%
\mathbb{C}
-$vector space of all the complex valued functions $g$ defined on
$U$\ satisfying
\[
\left\Vert g\right\Vert _{\rho}=\left\{  \int_{U}\left\vert g\left(  x\right)
\right\vert ^{\rho}dx\right\}  ^{\frac{1}{\rho}}<\infty,
\]
and $L^{\infty}\left(  U,%
\mathbb{C}
\right)  $ denotes the $\mathbb{C}-$vector space of all the complex valued
functions $g$ defined in $U$ such that the essential supremum of $|g|$ is
bounded. The corresponding $\mathbb{R}$-vector spaces are denoted as $L^{\rho
}\left(  U,\mathbb{R}\right)  $, $1\leq\rho\leq\infty$.

Let $U$ be an open subset of $%
\mathbb{Q}
_{p}$, we denote by $\mathcal{D}(U,\mathbb{C})$ the $\mathbb{C}$-vector space
of all test functions with support in $U$. For each $\rho\in\lbrack1,\infty)$,
$\mathcal{D}(U,\mathbb{C})$ is dense in $L^{\rho}\left(  U,\mathbb{C}\right)
$, see e.g. \cite[Proposition 4.3.3]{Alberio et al}.

\subsection{Fourier transform}

Set $\chi_{p}(y)=\exp(2\pi i\{y\}_{p})$ for $y\in%
\mathbb{Q}
_{p}$. The map $\chi_{p}$ is an additive character on $%
\mathbb{Q}
_{p}$, i.e. a continuous map from $\left(
\mathbb{Q}
_{p},+\right)  $ into $S$ (the unit circle considered as multiplicative group)
satisfying $\chi_{p}(x_{0}+x_{1})=\chi_{p}(x_{0})\chi_{p}(x_{1})$,
$x_{0},x_{1}\in%
\mathbb{Q}
_{p}$. The additive characters of $%
\mathbb{Q}
_{p}$ form an Abelian group which is isomorphic to $\left(
\mathbb{Q}
_{p},+\right)  $, the isomorphism is given by $\xi\rightarrow\chi_{p}(\xi x)$,
see e.g. \cite[Section 2.3]{Alberio et al}.

If $f\in L^{1}\left(
\mathbb{Q}
_{p},%
\mathbb{C}
\right)  $ its Fourier transform is defined by
\[
(\mathcal{F}f)(\xi)=\int_{%
\mathbb{Q}
_{p}}\chi_{p}(\xi x)f(x)dx,\quad\text{for }\xi\in%
\mathbb{Q}
_{p}.
\]
We will also use the notation $\mathcal{F}_{x\rightarrow\xi}f$ and
$\widehat{f}$\ for the Fourier transform of $f$. The Fourier transform is a
linear isomorphism (algebraic and topological) from $\mathcal{D}(%
\mathbb{Q}
_{p},\mathbb{C})$ onto itself satisfying
\begin{equation}
(\mathcal{F}(\mathcal{F}f))(\xi)=f(-\xi), \label{FF(f)}%
\end{equation}
for every $f\in\mathcal{D}(%
\mathbb{Q}
_{p},\mathbb{C}),$ see e.g. \cite[Section 4.8]{Alberio et al}. If $f\in
L^{2},$ its Fourier transform is defined as
\[
(\mathcal{F}f)(\xi)=\lim_{k\rightarrow\infty}\int_{|x|_{p}\leq p^{k}}\chi
_{p}(\xi x)f(x)dx,\quad\text{for }\xi\in%
\mathbb{Q}
_{p},
\]
where the limit is taken in $L^{2}(%
\mathbb{Q}
_{p},\mathbb{C})$. We recall that the Fourier transform is unitary on $L^{2}(%
\mathbb{Q}
_{p},\mathbb{C}),$ i.e. $||f||_{2}=||\mathcal{F}f||_{2}$ for $f\in L^{2}(%
\mathbb{Q}
_{p},\mathbb{C})$ and that (\ref{FF(f)}) is also valid in $L^{2}(%
\mathbb{Q}
_{p},\mathbb{C})$, see e.g. \cite[Chapter $III$, Section 2]{Taibleson}.

\section{$p$-Adic analogues of Reaction-diffusion systems on networks}

We consider an arbitrary graph $\mathcal{G}$ with vertices $I\in G_{N}^{0}$,
where $G_{N}^{0}$ is a finite set. Later on we will consider $G_{N}^{0}$ as a
subset of $\mathbb{Q}_{p}$, in this way we will obtain an embedding \ of
$\mathcal{G}$ in this space. When there is no connection between the vertices,
the dynamics on each vertex is controlled by local interactions described as%
\begin{equation}
\left\{
\begin{array}
[c]{l}%
\frac{\partial u_{J}}{\partial t}=f(u_{J},v_{J})\\
\\
\frac{\partial v_{J}}{\partial t}=g(u_{J},v_{J}),
\end{array}
\right.  \label{Eq_2}%
\end{equation}
for $J\in G_{N}^{0}$, where a pair $\left(  u_{J},v_{J}\right)  =(u_{J}\left(
t\right)  ,v_{J}\left(  t\right)  )$ represents some quantities in the vertex
$J$, such as population densities of biological species or concentrations of
chemical substances. When connection between vertices is taken into account,
we assume the existence of a flux of quantities between these vertices. If two
\ vertices are not connected, there is no flux between them. The flux is
assumed to be given by Fick's law of diffusion, which means that the flux is
proportional to the difference of quantities on the two vertices. Therefore
the dynamics of $u_{J}$\ and $v_{J}$ on vertex $J$ is described as%
\begin{equation}
\left\{
\begin{array}
[c]{l}%
\frac{\partial u_{J}}{\partial t}=f(u_{J},v_{J})+\varepsilon%
{\textstyle\sum\limits_{I\in G_{N}^{0}}}
A_{JI}\left\{  u_{I}-u_{J}\right\} \\
\\
\frac{\partial v_{J}}{\partial t}=g(u_{J},v_{J})+\varepsilon d%
{\textstyle\sum\limits_{I\in G_{N}^{0}}}
A_{JI}\left\{  v_{I}-v_{J}\right\}  ,
\end{array}
\right.  \label{Eq_3}%
\end{equation}
for $J\in G_{N}^{0}$, where%
\[
A_{JI}:=\left\{
\begin{array}
[c]{ll}%
1 & \text{if the vertices }J\text{ and }I\text{ are connected}\\
& \\
0 & \text{otherwise.}%
\end{array}
\right.
\]
The matrix $\left[  A_{JI}\right]  _{J,I\in G_{N}^{0}}$ is called the
\ \textit{adjacency matrix }of $\mathcal{G}$. We admit the possibility that
$A_{II}\neq0$, which means that graph $\mathcal{G}$ may have self-loops, we
also admit the possibility that $A_{IJ}\neq A_{JI}$, which means that we do
not require that matrix $\left[  A_{JI}\right]  _{J,I\in G_{N}^{0}}$ be
symmetric. However, in Sections \ref{Section_spectrum_L}\ and
\ref{Section_Turing_criteria}, we will require that $\mathcal{G}$ be an
undirected graph with a symmetric adjacency matrix.

The positive constants $\varepsilon$ and $\varepsilon d$ denote the
diffusivities of $u$ and $v$. The number of edges connecting to vertex $I$ is
\[
\gamma_{I}:=%
{\textstyle\sum\limits_{J\in G_{N}^{0}}}
A_{IJ}\text{.}%
\]
We set
\begin{equation}
\gamma_{_{\mathcal{G}}}:=\max_{I\in G_{N}^{0}}\gamma_{I}. \label{Eq_Gamma_T}%
\end{equation}
For each $J\in G_{N}^{0}$, we can rewrite\ the flux term as%
\[%
{\textstyle\sum\limits_{I\in G_{N}^{0}}}
A_{JI}\left\{  u_{I}-u_{J}\right\}  =%
{\textstyle\sum\limits_{I\in G_{N}^{0}}}
L_{JI}u_{I},
\]
where $L_{JI}=A_{JI}-\gamma_{I}\delta_{JI}$, and $\delta_{JI}$ denotes the
Kronecker delta. The matrix $\left[  L_{JI}\right]  _{J,I\in G_{N}^{0}}$ is
called the \textit{Laplacian matrix of the graph} $\mathcal{G}$. Then, the
system (\ref{Eq_3}) can be rewritten as%
\begin{equation}
\left\{
\begin{array}
[c]{l}%
\frac{\partial u_{J}}{\partial t}=f(u_{J},v_{J})+\varepsilon%
{\textstyle\sum\limits_{I\in G_{N}^{0}}}
L_{JI}u_{I}\\
\\
\frac{\partial v_{J}}{\partial t}=g(u_{J},v_{J})+\varepsilon d%
{\textstyle\sum\limits_{I\in G_{N}^{0}}}
L_{JI}v_{I},
\end{array}
\right.  \label{Eq_4}%
\end{equation}
for $J\in G_{N}^{0}$.

\subsection{A $p$-Adic analogue of system (\ref{Eq_4})}

We now construct a $p$-adic analogue of system (\ref{Eq_4}). We set%
\[
G_{N}:=\mathbb{Z}_{p}/p^{N}\mathbb{Z}_{p}\text{ for }N\geq1\text{.}%
\]
We identify $G_{N}$\ with the set of representatives of the form%
\begin{equation}
I=I_{0}+I_{1}p+\ldots+I_{N-1}p^{N-1}, \label{Eq_I}%
\end{equation}
where the $I_{j}$s are $p$-adic digits. We assume that $G_{N}^{0}\subset
G_{N}$. This implies that the number of vertices $\#G_{N}^{0}$ of
$\mathcal{G}$ must satisfy $\#G_{N}^{0}\leq p^{N}$. From now on, we fix and
$N$ and $p$ such that this inequality holds. There is no a canonical way of
choosing $N$ and $p$. On the other hand, since the elements of the form
(\ref{Eq_I}) belong to $\mathbb{Z}_{p}\smallsetminus p^{N}\mathbb{Z}_{p}$, the
assumption $G_{N}^{0}\subset G_{N}$ gives rise an embedding of $\mathcal{G}$
into $\mathbb{Z}_{p}\smallsetminus p^{N}\mathbb{Z}_{p}$.

We define
\[
\mathcal{K}_{N}=%
{\textstyle\bigsqcup\limits_{I\in G_{N}^{0}}}
I+p^{N}\mathbb{Z}_{p}\text{.}%
\]
Then $\mathcal{K}_{N}$ is an open compact subset of $\mathbb{Z}_{p}$. We also
define%
\begin{equation}
J_{N}(x,y)=p^{N}%
{\textstyle\sum\limits_{J\in G_{N}^{0}}}
{\textstyle\sum\limits_{K\in G_{N}^{0}}}
A_{JK}\Omega\left(  p^{N}\left\vert x-J\right\vert _{p}\right)  \Omega\left(
p^{N}\left\vert y-K\right\vert _{p}\right)  \text{,} \label{Eq_Kernel_J_N}%
\end{equation}
$x$, $y\in\mathbb{Q}_{p}$, where $\left[  A_{JI}\right]  _{J,I\in G_{N}^{0}}$
is the adjacency matrix of graph $\mathcal{G}$. Notice that $J_{N}(x,y)$ is a
test function from $\mathcal{D}(\mathcal{K}_{N}\times\mathcal{K}%
_{N},\mathbb{R})$.

We denote by ${\large C}\left(  \mathcal{K}_{N},\mathbb{R}\right)  $ the
vector space of all the continuous real-valued functions on $\mathcal{K}_{N}$
endowed with supremum norm, denoted as $\left\Vert \cdot\right\Vert _{\infty}%
$. Notice that ${\large C}\left(  \mathcal{K}_{N},\mathbb{R}\right)  $ is a
Banach space. We denote by ${\large X}_{N}$, the $\mathbb{R}$-vector space
consisting of all the test functions supported in $\mathcal{K}_{N}$\ having
the form%
\[
\varphi\left(  x\right)  =%
{\textstyle\sum\limits_{J\in G_{N}^{0}}}
\varphi\left(  J\right)  \Omega\left(  p^{N}\left\vert x-J\right\vert
_{p}\right)  \text{,}%
\]
where $\varphi\left(  J\right)  \in\mathbb{R}$. We endow ${\large X}_{N}$ with
\ the $\left\Vert \cdot\right\Vert _{\infty}$-norm. Then $X_{N}$ is a closed
subspace of ${\large C}\left(  \mathcal{K}_{N},\mathbb{R}\right)  $, in
addition,
\[
X_{N}\simeq\left(  \mathbb{R}^{\#G_{N}^{0}},\left\Vert \cdot\right\Vert
_{\infty}\right)  ,\text{ as \ Banach spaces,}%
\]
where $\left\Vert \left(  x_{1},\ldots,x_{^{\#G_{N}^{0}}}\right)  \right\Vert
_{\infty}:=\max\left\{  \left\vert x_{1}\right\vert ,\ldots,\left\vert
x_{^{\#G_{N}^{0}}}\right\vert \right\}  $ for $\left(  x_{1},\ldots
,x_{^{\#G_{N}^{0}}}\right)  \in\mathbb{R}^{\#G_{N}^{0}}$. \ Notice that
\[
\left\{  \Omega\left(  p^{N}\left\vert x-J\right\vert _{p}\right)  \right\}
_{J\in G_{N}^{0}}%
\]
is a basis of $X_{N}$.

We now define the operator%
\[
\boldsymbol{L}_{N}\varphi\left(  x\right)  =%
{\textstyle\int\limits_{\mathcal{K}_{N}}}
\left\{  \varphi\left(  y\right)  -\varphi\left(  x\right)  \right\}
J_{N}(x,y)dy\text{, for }\varphi\in X_{N}\text{.}%
\]
Then $\boldsymbol{L}_{N}:X_{N}\rightarrow X_{N}$ is a linear bounded operator
satisfying%
\[
\left\Vert \boldsymbol{L}_{N}\right\Vert \leq2\gamma_{_{\mathcal{G}}},
\]
see (\ref{Eq_Gamma_T}). In addition,%
\begin{gather*}
\boldsymbol{L}_{N}\Omega\left(  p^{N}\left\vert x-I\right\vert _{p}\right)  =%
{\textstyle\sum\limits_{J\in G_{N}^{0}}}
A_{JI}\Omega\left(  p^{N}\left\vert x-J\right\vert _{p}\right)  -\left(
{\textstyle\sum\limits_{K\in G_{N}^{0}}}
A_{IK}\right)  \Omega\left(  p^{N}\left\vert x-I\right\vert _{p}\right) \\
=%
{\textstyle\sum\limits_{J\in G_{N}^{0}}}
A_{JI}\Omega\left(  p^{N}\left\vert x-J\right\vert _{p}\right)  -\gamma
_{I}\Omega\left(  p^{N}\left\vert x-I\right\vert _{p}\right) \\
=%
{\textstyle\sum\limits_{J\in G_{N}^{0}}}
\left\{  A_{JI}-\gamma_{I}\delta_{JI}\right\}  \Omega\left(  p^{N}\left\vert
x-J\right\vert _{p}\right)  .
\end{gather*}
Consequently, operator $\boldsymbol{L}_{N}:X_{N}\rightarrow X_{N}$ is
represented by the matrix
\begin{equation}
\left[  A_{JI}-\gamma_{I}\delta_{JI}\right]  _{J,I\in G_{N}^{0}}.
\label{Eq_Matrix}%
\end{equation}

\begin{notation}
We set $\mathbb{R}_{+}:=\left[  0,+\infty\right)  $.
\end{notation}

Consider the following system:%
\begin{equation}
\left\{
\begin{array}
[c]{l}%
u^{\left(  N\right)  }\left(  \cdot,t\right)  ,v^{\left(  N\right)  }\left(
\cdot,t\right)  \in{\large C}^{1}(\mathbb{R}_{+},X_{N});\\
\\
\frac{\partial u^{\left(  N\right)  }\left(  x,t\right)  }{\partial
t}=f(u^{\left(  N\right)  }\left(  x,t\right)  ,v^{\left(  N\right)  }\left(
x,t\right)  )+\varepsilon\boldsymbol{L}_{N}u^{\left(  N\right)  }\left(
x,t\right) \\
\\
\frac{\partial v^{\left(  N\right)  }\left(  x,t\right)  }{\partial
t}=g(u^{\left(  N\right)  }\left(  x,t\right)  ,v^{\left(  N\right)  }\left(
x,t\right)  )+\varepsilon d\boldsymbol{L}_{N}v^{\left(  N\right)  }\left(
x,t\right)  .
\end{array}
\right.  \label{Eq_5}%
\end{equation}
By using that%
\[%
\begin{array}
[c]{l}%
u^{\left(  N\right)  }\left(  x,t\right)  =%
{\textstyle\sum\limits_{L\in G_{N}^{0}}}
u^{\left(  N\right)  }\left(  L,t\right)  \Omega\left(  p^{N}\left\vert
x-L\right\vert _{p}\right)  ,\\
\\
v^{\left(  N\right)  }\left(  x,t\right)  =%
{\textstyle\sum\limits_{L\in G_{N}^{0}}}
v^{\left(  N\right)  }\left(  L,t\right)  \Omega\left(  p^{N}\left\vert
x-L\right\vert _{p}\right)  ,
\end{array}
\]
where $u^{\left(  N\right)  }\left(  L,t\right)  $, $v^{\left(  N\right)
}\left(  L,t\right)  $ belong to ${\large C}^{1}(\mathbb{R}_{+})$, and
(\ref{Eq_Matrix}), and the fact that for any $h:\mathbb{R}^{2}\rightarrow
\mathbb{R}$,%
\[
h(u^{\left(  N\right)  }\left(  x,t\right)  ,v^{\left(  N\right)  }\left(
x,t\right)  )=%
{\textstyle\sum\limits_{L\in G_{N}^{0}}}
h\left(  u^{\left(  N\right)  }\left(  L,t\right)  ,v^{\left(  N\right)
}\left(  L,t\right)  \right)  \Omega\left(  p^{N}\left\vert x-L\right\vert
_{p}\right)  ,
\]
we\ obtain that the system (\ref{Eq_5}) can be rewritten as%
\begin{equation}
\left\{
\begin{array}
[c]{l}%
\frac{\partial u^{\left(  N\right)  }\left(  I,t\right)  }{\partial t}=\\
\\
f(u^{\left(  N\right)  }\left(  I,t\right)  ,v^{\left(  N\right)  }\left(
I,t\right)  )+\varepsilon%
{\textstyle\sum\limits_{L\in G_{N}^{0}}}
\left\{  A_{IL}-\gamma_{I}\delta_{IL}\right\}  u^{\left(  N\right)  }\left(
L,t\right) \\
\\
\frac{\partial v^{\left(  N\right)  }\left(  I,t\right)  }{\partial t}=\\
\\
g(u^{\left(  N\right)  }\left(  I,t\right)  ,v^{\left(  N\right)  }\left(
I,t\right)  )+\varepsilon d%
{\textstyle\sum\limits_{L\in G_{N}^{0}}}
\left\{  A_{IL}-\gamma_{I}\delta_{IL}\right\}  u^{\left(  N\right)  }\left(
L,t\right)  ,
\end{array}
\right.  \label{Eq_6}%
\end{equation}
for $I\in G_{N}^{0}$, which is exactly the system (\ref{Eq_4}).

In order to obtain a $p$-adic continuous version of the system (\ref{Eq_5}),
we first notice that \ for $\varphi\in{\large C}(\mathcal{K}_{N},\mathbb{R})$,
the function%
\begin{equation}
\boldsymbol{L}\varphi\left(  x\right)  =%
{\textstyle\int\limits_{\mathcal{K}_{N}}}
\left\{  \varphi\left(  y\right)  -\varphi\left(  x\right)  \right\}
J_{N}\left(  x,y\right)  dy \label{Eq_Op_T}%
\end{equation}
belongs to ${\large C}(\mathcal{K}_{N},\mathbb{R})$, and that operator
$\boldsymbol{L}$ is a linear continuous operator satisfying
\[
\left\Vert \boldsymbol{L}\right\Vert \leq2\gamma_{_{\mathcal{G}}}\text{ and
}\boldsymbol{L}_{N}=\boldsymbol{L}\mid_{X_{N}}.
\]
By using the fact that operator $\boldsymbol{L}$ is an extension of
$\boldsymbol{L}_{N}$, it is natural to postulate\ that the system
\begin{equation}
\left\{
\begin{array}
[c]{l}%
u\left(  \cdot,t\right)  ,v\left(  \cdot,t\right)  \in{\large C}%
^{1}(\mathbb{R}_{+},{\large C}\left(  \mathcal{K}_{N},\mathbb{R}\right)  );\\
\\
\frac{\partial u\left(  x,t\right)  }{\partial t}=f(u,v)+\varepsilon
\boldsymbol{L}u\left(  x,t\right) \\
\\
\frac{\partial v\left(  x,t\right)  }{\partial t}=g(u,v)+\varepsilon
d\boldsymbol{L}v\left(  x,t\right)  ,
\end{array}
\right.  \label{Eq_7}%
\end{equation}
is a `$p$-adic analog' of the system (\ref{Eq_5}). In the following sections,
we will establish in a mathematically rigorous way this assertion.

\section{$p$-Adic diffusion and self-similarity in $X_{\infty}$}

In this section we study the following Cauchy problem:%
\begin{equation}
\left\{
\begin{array}
[c]{l}%
h\left(  x,t\right)  \in{\large C}^{1}(\left(  0,\infty\right)  ,{\large C}%
(\mathcal{K}_{N},\mathbb{R}));\\
\\
\frac{\partial h\left(  x,t\right)  }{\partial t}=\varepsilon\boldsymbol{L}%
h\left(  x,t\right)  \text{, \ }x\in\mathcal{K}_{N}\text{, }t>0;\\
\\
h\left(  x,0\right)  =h_{0}(x)\in{\large C}(\mathcal{K}_{N},\mathbb{R}),
\end{array}
\right.  \label{Eq_10}%
\end{equation}
where $\boldsymbol{L}:{\large C}(\mathcal{K}_{N},\mathbb{R})\rightarrow
{\large C}(\mathcal{K}_{N},\mathbb{R})$ is the operator defined in
(\ref{Eq_Op_T}). In this section we show that the equation (\ref{Eq_10}) is a
`$p$-adic heat equation,' which means that the corresponding semigroup is
Feller, and consequently there is a $p$-adic diffusion process in
$\mathcal{K}_{N}$ attached to the differential equation (\ref{Eq_10}).

\subsection{Yosida-Hille-Ray theorem and Feller semigroups}

We formulate Yosida-Hille-Ray Theorem in the setting of $(\mathbb{Q}%
_{p},\left\vert \cdot\right\vert _{p})$. For a general discussion the reader
may consult \cite[Chapter 4, Theorem 2.2]{E-K}.

A semigroup $\{\boldsymbol{Q}(t)\}_{t\geq0}$ on ${\large C}\left(
\mathcal{K}_{N},\mathbb{R}\right)  $ is said to be \textit{positive} if
$\boldsymbol{Q}(t)$ is a positive operator for each $t\geq0$, i.e. it maps
non-negative functions to non-negative functions. An operator $(\boldsymbol{A}%
,Dom(\boldsymbol{A}))$ on ${\large C}\left(  \mathcal{K}_{N},\mathbb{R}%
\right)  $ is said to satisfy the \textit{positive maximum principle} if
whenever $h\in Dom(\boldsymbol{A})\subseteq{\large C}\left(  \mathcal{K}%
_{N},\mathbb{R}\right)  $, $x_{0}\in\mathbb{Q}_{p}$, and $\sup_{x\in\mathbf{%
\mathbb{Q}
}_{p}}h(x)=h(x_{0})\geq0$ we have $\boldsymbol{A}h(x_{0})\leq0$.

We recall that every linear operator on ${\large C}\left(  \mathcal{K}%
_{N},\mathbb{R}\right)  $ satisfying the positive maximum principle is
dissipative, see e.g. \cite[Chapter 4, Lemma 2.1]{E-K}.

\begin{theorem}
[Hille-Yosida-Ray Theorem]\label{Hille-Yosida-Ray-Theorem}Let $(\boldsymbol{A}%
,Dom(\boldsymbol{A}))$ be a linear operator on ${\large C}\left(
\mathcal{K}_{N},\mathbb{R}\right)  $. The closure $\overline{\boldsymbol{A}}$
of $\boldsymbol{A}$ on ${\large C}\left(  \mathcal{K}_{N},\mathbb{R}\right)  $
is single-valued and generates a strongly continuous, positive, contraction
semigroup $\{\boldsymbol{Q}_{t}\}_{t\geq0}$ on ${\large C}\left(
\mathcal{K}_{N},\mathbb{R}\right)  $ if and only if:

\noindent(i) $Dom(\boldsymbol{A})$ is dense in ${\large C}\left(
\mathcal{K}_{N},\mathbb{R}\right)  $;

\noindent(ii) $\boldsymbol{A}$ satisfies the positive maximum principle;

\noindent(iii) Rank$(\eta\boldsymbol{I}-\boldsymbol{A})$ is dense in
${\large C}\left(  \mathcal{K}_{N},\mathbb{R}\right)  $ for some $\eta>0$.
\end{theorem}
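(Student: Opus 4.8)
The plan is to recognize this statement as the specialization to $\mathcal{K}_{N}$ of the abstract Hille--Yosida--Ray theorem of \cite[Chapter 4, Theorem 2.2]{E-K}, and to check that the ambient Banach space ${\large C}\left(\mathcal{K}_{N},\mathbb{R}\right)$ meets the hypotheses required there. The key topological observation is that $\mathcal{K}_{N}$ is a closed and bounded, hence compact, subset of the complete separable ultrametric space $\mathbb{Q}_{p}$, so it is itself a compact separable metric space. Because $\mathcal{K}_{N}$ is compact, every continuous real function on it is bounded and trivially vanishes at infinity, so ${\large C}\left(\mathcal{K}_{N},\mathbb{R}\right)$ coincides with the function space for which Ethier and Kurtz state their result. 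Under this identification, conditions (i)--(iii) are precisely the hypotheses of the abstract theorem, and the conclusion transfers verbatim.

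For a self-contained argument I would split the equivalence into its two implications. For necessity, assume $\overline{\boldsymbol{A}}$ generates a strongly continuous, positive, contraction semigroup $\{\boldsymbol{Q}_{t}\}_{t\geq0}$. Density of $Dom(\boldsymbol{A})$ and the density of Rank$(\eta\boldsymbol{I}-\boldsymbol{A})$ for $\eta>0$ are standard consequences of the generator--resolvent correspondence. The positive maximum principle is the only genuinely order-theoretic point: if $h\in Dom(\boldsymbol{A})$ satisfies $\sup_{x}h(x)=h(x_{0})\geq0$, then $h(x_{0})-h\geq0$, and positivity together with the contraction bound (applied to the constant function $1$, giving $\boldsymbol{Q}_{t}1\leq1$) yields $\boldsymbol{Q}_{t}h\leq h(x_{0})\boldsymbol{Q}_{t}1\leq h(x_{0})$; evaluating at $x_{0}$ and letting $t\downarrow0$ gives $\boldsymbol{A}h(x_{0})=\lim_{t\downarrow0}t^{-1}\left(\boldsymbol{Q}_{t}h(x_{0})-h(x_{0})\right)\leq0$.

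The sufficiency direction is where I expect the main obstacle. Since the positive maximum principle implies dissipativity (as recalled before the statement, and this property passes to the closure $\overline{\boldsymbol{A}}$), conditions (i), dissipativity, and (iii) let the classical Lumer--Phillips/Hille--Yosida theorem produce a strongly continuous contraction semigroup generated by $\overline{\boldsymbol{A}}$. The delicate remaining step is \emph{positivity} of this semigroup, which I would obtain from positivity of the resolvent $(\eta\boldsymbol{I}-\overline{\boldsymbol{A}})^{-1}$: if $(\eta\boldsymbol{I}-\overline{\boldsymbol{A}})h=g$ with $g\geq0$ but $h$ attained a strictly negative infimum at some $x_{0}$, then applying the positive maximum principle to $-h$ (whose supremum $-h(x_{0})>0$ is attained at $x_{0}$) gives $\overline{\boldsymbol{A}}h(x_{0})\geq0$, whence $g(x_{0})=\eta h(x_{0})-\overline{\boldsymbol{A}}h(x_{0})\leq\eta h(x_{0})<0$, a contradiction; therefore $h\geq0$ and the resolvent is positive. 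Positivity of $\{\boldsymbol{Q}_{t}\}_{t\geq0}$ then follows from the exponential (Yosida) formula $\boldsymbol{Q}_{t}=\lim_{n\to\infty}\left(\boldsymbol{I}-\tfrac{t}{n}\overline{\boldsymbol{A}}\right)^{-n}$, since each factor is a positive operator and positivity is preserved under composition and strong limits.
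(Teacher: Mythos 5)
The paper offers no proof of this statement: it is quoted directly from Ethier--Kurtz \cite[Chapter 4, Theorem 2.2]{E-K}, specialized to the compact set $\mathcal{K}_{N}$, and your first paragraph performs exactly the same verification (compactness of $\mathcal{K}_{N}$, so that ${\large C}\left(\mathcal{K}_{N},\mathbb{R}\right)$ coincides with the function space required there), so your proposal matches the paper's approach. Your supplementary self-contained sketch is the standard argument and essentially correct; the one point to watch is that in the resolvent-positivity step you apply the positive maximum principle to an element of $Dom(\overline{\boldsymbol{A}})$, whereas hypothesis (ii) only provides it on $Dom(\boldsymbol{A})$, so one should either first establish positivity of $(\eta\boldsymbol{I}-\boldsymbol{A})^{-1}$ on the dense subspace $\mathrm{Rank}(\eta\boldsymbol{I}-\boldsymbol{A})$ and then extend by a limiting argument, or verify that the positive maximum principle passes to the closure.
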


\begin{definition}
A family of bounded linear operators $\boldsymbol{P}_{t}:{\large C}\left(
\mathcal{K}_{N},\mathbb{R}\right)  \rightarrow{\large C}\left(  \mathcal{K}%
_{N},\mathbb{R}\right)  $ is called a Feller semigroup if

\noindent(i) $\boldsymbol{P}_{s+t}=\boldsymbol{P}_{s}\boldsymbol{P}_{t}$ and
$\boldsymbol{P}_{0}=I$;

\noindent(ii) $\lim_{t\rightarrow0}||\boldsymbol{P}_{t}h-h||_{\infty}=0$ for
any $h\in{\large C}\left(  \mathcal{K}_{N},\mathbb{R}\right)  $;

\noindent(iii) $0\leq\boldsymbol{P}_{t}h\leq1$ if $0\leq h\leq1$, with
$h\in{\large C}\left(  \mathcal{K}_{N},\mathbb{R}\right)  $ and for any
$t\geq0$.
\end{definition}

Therefore, Theorem \ref{Hille-Yosida-Ray-Theorem} characterizes the Feller
semigroups. More precisely, if $(\boldsymbol{A},Dom(\boldsymbol{A}))$
satisfies Theorem \ref{Hille-Yosida-Ray-Theorem}, then $\boldsymbol{A}$ has a
closed extension which is the generator of a Feller semigroup.

\begin{lemma}
\label{Lemma1}The operator $\varepsilon\boldsymbol{L}$ generates a strongly
continuous, positive, contraction semigroup $\left\{  e^{t\varepsilon
\boldsymbol{L}}\right\}  _{t\geq0}$ on ${\large C}\left(  \mathcal{K}%
_{N},\mathbb{R}\right)  $.
\end{lemma}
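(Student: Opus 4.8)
The plan is to verify the three hypotheses of the Hille--Yosida--Ray Theorem (Theorem \ref{Hille-Yosida-Ray-Theorem}) for the operator $\boldsymbol{A}=\varepsilon\boldsymbol{L}$ taken with domain $Dom(\boldsymbol{A})={\large C}(\mathcal{K}_{N},\mathbb{R})$. Since $\boldsymbol{L}$ is a bounded operator on ${\large C}(\mathcal{K}_{N},\mathbb{R})$ (indeed $\left\Vert \boldsymbol{L}\right\Vert \leq 2\gamma_{_{\mathcal{G}}}$), the operator $\varepsilon\boldsymbol{L}$ is bounded and everywhere defined, hence closed and single-valued, and its closure is itself. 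This immediately settles hypothesis (i): $Dom(\boldsymbol{A})$ is the whole space, so it is dense. Moreover, the semigroup generated by a bounded operator is exactly the uniformly continuous semigroup $\{e^{t\varepsilon\boldsymbol{L}}\}_{t\geq 0}$ given by the usual exponential series, so once (i)--(iii) are checked the Theorem identifies this semigroup as the strongly continuous, positive, contraction semigroup claimed.

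The substantive step is hypothesis (ii), the positive maximum principle. Suppose $h\in{\large C}(\mathcal{K}_{N},\mathbb{R})$ attains its supremum at a point $x_{0}$ with $h(x_{0})=\sup_{x}h(x)\geq 0$. Evaluating the operator at $x_{0}$ gives
\[
\varepsilon\boldsymbol{L}h(x_{0})=\varepsilon\int_{\mathcal{K}_{N}}\left\{ h(y)-h(x_{0})\right\} J_{N}(x_{0},y)\,dy .
\]
Here $\varepsilon>0$, and the kernel $J_{N}(x_{0},y)$ is nonnegative because, by (\ref{Eq_Kernel_J_N}), it is a linear combination of products of characteristic functions with coefficients $p^{N}A_{JK}$, and the adjacency entries satisfy $A_{JK}\in\{0,1\}$. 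Since $x_{0}$ is a maximum, $h(y)-h(x_{0})\leq 0$ for every $y\in\mathcal{K}_{N}$. Therefore the integrand is pointwise $\leq 0$, whence $\varepsilon\boldsymbol{L}h(x_{0})\leq 0$, which is precisely the positive maximum principle.

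Finally, for hypothesis (iii) I would pick any $\eta>\left\Vert \varepsilon\boldsymbol{L}\right\Vert$. Then $\eta\boldsymbol{I}-\varepsilon\boldsymbol{L}$ is invertible on ${\large C}(\mathcal{K}_{N},\mathbb{R})$ via the Neumann series $(\eta\boldsymbol{I}-\varepsilon\boldsymbol{L})^{-1}=\eta^{-1}\sum_{k\geq 0}(\eta^{-1}\varepsilon\boldsymbol{L})^{k}$, so its range is the entire space and in particular dense. With (i)--(iii) in hand, Theorem \ref{Hille-Yosida-Ray-Theorem} yields that $\varepsilon\boldsymbol{L}$ generates a strongly continuous, positive, contraction semigroup, namely $\{e^{t\varepsilon\boldsymbol{L}}\}_{t\geq 0}$. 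I do not expect any genuine obstacle: the only point requiring care is the sign of the kernel, which is immediate from the nonnegativity of the adjacency matrix, and all the semigroup-theoretic content is supplied by the boundedness of $\boldsymbol{L}$ together with the quoted theorem.
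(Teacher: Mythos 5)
Your proposal is correct and follows essentially the same route as the paper: both verify the hypotheses of the Hille--Yosida--Ray theorem for the bounded operator $\varepsilon\boldsymbol{L}$ and then identify the resulting semigroup with $\left\{ e^{t\varepsilon\boldsymbol{L}}\right\} _{t\geq0}$ via uniqueness of the infinitesimal generator. The only divergence is in condition (iii), where the paper solves the resolvent equation by dividing by $\eta+\varepsilon\int_{\mathcal{K}_{N}}J_{N}(x,y)\,dy$ and inverting a contraction of norm at most $\varepsilon\gamma_{\mathcal{G}}/\eta$, whereas you invert $\eta\boldsymbol{I}-\varepsilon\boldsymbol{L}$ directly by a Neumann series for $\eta>\left\Vert \varepsilon\boldsymbol{L}\right\Vert $ --- a simpler and equally valid argument given that $\boldsymbol{L}$ is bounded; you also spell out the positive maximum principle (nonnegativity of the kernel $J_{N}$ plus $h(y)-h(x_{0})\leq0$ at a maximum), which the paper dismisses as straightforward.
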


\begin{proof}
We first verify the conditions given in Theorem \ref{Hille-Yosida-Ray-Theorem}%
. The verification of conditions (i)-(ii) is straightforward. The third
condition in Theorem \ref{Hille-Yosida-Ray-Theorem} is equivalent to the
existence of a $\eta>0$ such that for any $h\in{\large C}\left(
\mathcal{K}_{N},\mathbb{R}\right)  $ the equation
\begin{equation}
\eta u-\varepsilon\boldsymbol{L}u=h \label{EQ_10AAA}%
\end{equation}
has a solution $u\in{\large C}\left(  \mathcal{K}_{N},\mathbb{R}\right)  $.
Set
\[
g(x):=%
{\textstyle\int\limits_{\mathcal{K}_{N}}}
J_{N}\left(  x,y\right)  dy=%
{\textstyle\sum\limits_{I\in G_{N}^{0}}}
\gamma_{I}\Omega\left(  p^{N}\left\vert x-I\right\vert _{p}\right)  .
\]
Then $g(x)>0$, $\left\Vert g\right\Vert _{\infty}\leq\gamma_{\mathcal{G}}$ and
$g\in{\large C}\left(  \mathcal{K}_{N},\mathbb{R}\right)  $. We now rewrite
(\ref{EQ_10AAA}) as follows:%
\[
u(x)-\varepsilon%
{\textstyle\int\limits_{\mathcal{K}_{N}}}
u\left(  y\right)  \left\{  \frac{J_{N}\left(  x,y\right)  }{\eta+\varepsilon
g(x)}\right\}  dy=\frac{h(x)}{\eta+\varepsilon g(x)},
\]
where $\frac{h(x)}{\eta+\varepsilon g(x)}\in{\large C}\left(  \mathcal{K}%
_{N},\mathbb{R}\right)  $. Now, the operator $\boldsymbol{T}:{\large C}\left(
\mathcal{K}_{N},\mathbb{R}\right)  \rightarrow{\large C}\left(  \mathcal{K}%
_{N},\mathbb{R}\right)  $ defined as
\[
\boldsymbol{T}u\left(  x\right)  =\varepsilon%
{\textstyle\int\limits_{\mathcal{K}_{N}}}
u\left(  y\right)  \left\{  \frac{J_{N}\left(  x,y\right)  }{\eta+\varepsilon
g(x)}\right\}  dy
\]
satisfies $\left\Vert \boldsymbol{T}\right\Vert \leq\frac{\varepsilon
\gamma_{\mathcal{G}}}{\eta}$. By taking $\eta>\varepsilon\gamma_{\mathcal{G}}%
$, operator $\boldsymbol{I}-\boldsymbol{T}$ has an inverse on ${\large C}%
\left(  \mathcal{K}_{N},\mathbb{R}\right)  $. Notice that $\eta$ is
independent of $h$.

Therefore $\boldsymbol{L}=\overline{\boldsymbol{L}}$ generates a semigroup
$\{\boldsymbol{Q}_{t}\}_{t\geq0}$ having the properties announced in Theorem
\ref{Hille-Yosida-Ray-Theorem}. On the other hand, since $\varepsilon
\boldsymbol{L}$ is a linear bounded operator on a Banach space, $\left\{
e^{t\varepsilon\boldsymbol{L}}\right\}  _{t\geq0}$ is a uniformly continuous
semigroup, and by using that the infinitesimal generators of $\left\{
e^{t\varepsilon\boldsymbol{L}}\right\}  _{t\geq0}$\ and $\{\boldsymbol{Q}%
_{t}\}_{t\geq0}$ agree, we have $e^{t\varepsilon\boldsymbol{L}}=\boldsymbol{Q}%
_{t}$ for $t\geq0$, see e.g. \cite[Theorems 1.2 and 1.3]{Pazy}.
\end{proof}

\begin{theorem}
\label{Theorem1} There exists a probability measure $p_{t}\left(
x,\cdot\right)  $, $t\geq0$, $x\in\mathcal{K}_{N}$, on the Borel $\sigma
$-algebra of $\mathcal{K}_{N}$, such that the Cauchy problem (\ref{Eq_10}) has
a unique solution of the form%
\[
h(x,t)=\int\limits_{\mathcal{K}_{N}}h_{0}(y)p_{t}\left(  x,dy\right)  .
\]
In addition, $p_{t}\left(  x,\cdot\right)  $ is the transition function of a
Markov process $\mathfrak{X}$ whose paths are right continuous and have no
discontinuities other than jumps.
\end{theorem}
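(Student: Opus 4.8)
The plan is to deduce everything from Lemma \ref{Lemma1} together with the general correspondence between Feller semigroups and Markov processes. First I would set $\boldsymbol{P}_{t}:=e^{t\varepsilon\boldsymbol{L}}$ and check that $\{\boldsymbol{P}_{t}\}_{t\geq0}$ is a Feller semigroup in the sense of the Definition preceding the statement. The semigroup identity $\boldsymbol{P}_{s+t}=\boldsymbol{P}_{s}\boldsymbol{P}_{t}$ and $\boldsymbol{P}_{0}=\boldsymbol{I}$ are built into the exponential, and strong continuity is part of the conclusion of Lemma \ref{Lemma1}. For the order condition, if $0\leq h\leq 1$ then positivity of the semigroup gives $\boldsymbol{P}_{t}h\geq 0$, while the contraction property gives $\|\boldsymbol{P}_{t}h\|_{\infty}\leq\|h\|_{\infty}\leq 1$; hence $0\leq\boldsymbol{P}_{t}h\leq 1$, which is exactly condition (iii). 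Thus $\{\boldsymbol{P}_{t}\}_{t\geq0}$ is Feller.

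Next I would extract the transition function. For fixed $t\geq0$ and $x\in\mathcal{K}_{N}$, the map $h\mapsto(\boldsymbol{P}_{t}h)(x)$ is a positive linear functional on ${\large C}(\mathcal{K}_{N},\mathbb{R})$ of norm at most one, so the Riesz--Markov representation theorem yields a Borel measure $p_{t}(x,\cdot)$ on $\mathcal{K}_{N}$ with
\[
(\boldsymbol{P}_{t}h)(x)=\int_{\mathcal{K}_{N}}h(y)\,p_{t}(x,dy).
\]
To see that this is a genuine probability measure rather than a sub-probability one, observe from the definition (\ref{Eq_Op_T}) that $\boldsymbol{L}1=0$, since the integrand $\varphi(y)-\varphi(x)$ vanishes for the constant function; consequently $\boldsymbol{P}_{t}1=1$ and $p_{t}(x,\mathcal{K}_{N})=1$. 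The semigroup law $\boldsymbol{P}_{s+t}=\boldsymbol{P}_{s}\boldsymbol{P}_{t}$ translates into the Chapman--Kolmogorov equations for $p_{t}(x,\cdot)$, so $\{p_{t}\}_{t\geq0}$ is indeed a (conservative) transition function.

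For the solution of the Cauchy problem (\ref{Eq_10}), I would use that $\varepsilon\boldsymbol{L}$ is a bounded operator, so that $h(x,t):=(\boldsymbol{P}_{t}h_{0})(x)=\int_{\mathcal{K}_{N}}h_{0}(y)\,p_{t}(x,dy)$ is the unique ${\large C}^{1}$ solution: existence is just the differentiability of the uniformly continuous semigroup generated by a bounded operator, and uniqueness follows from the standard Gr\"{o}nwall argument for linear ODEs in a Banach space (any two solutions of $\partial_{t}h=\varepsilon\boldsymbol{L}h$ with the same datum coincide). Here I expect no serious difficulty, since all constructions reduce to the exponential of $\varepsilon\boldsymbol{L}$.

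Finally, the existence of the Markov process $\mathfrak{X}$ with the stated path regularity is the step that genuinely requires the general theory rather than a direct computation, and this is where I expect the only real subtlety. Having a Feller semigroup on the compact metric space $\mathcal{K}_{N}$, one invokes the standard construction (for example \cite[Chapter 4]{E-K}) producing a strong Markov process with transition function $p_{t}(x,\cdot)$ whose sample paths lie in the Skorohod space, i.e. are right continuous with left limits. The additional assertion that the paths have no discontinuities other than jumps reflects that $\mathcal{K}_{N}$ is totally disconnected (a compact subset of $\mathbb{Q}_{p}$) and that $\boldsymbol{L}$ is a bounded integral operator of jump type, so the associated process moves only by jumps; one records this qualitative description directly from the structure of $\boldsymbol{L}$ together with the general path-regularity theorem for Feller processes.
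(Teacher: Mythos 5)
Your proposal is correct and follows essentially the same route as the paper: Lemma \ref{Lemma1} gives the Feller property of $\{e^{t\varepsilon\boldsymbol{L}}\}_{t\geq0}$, and the transition function and Markov process are then obtained from the standard correspondence between Feller semigroups, transition functions and Markov processes (the paper cites Taira where you invoke Riesz--Markov and the conservativity computation $\boldsymbol{L}1=0$ explicitly). The extra details you supply are accurate and merely unpack the cited correspondence.
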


\begin{proof}
By Lemma \ref{Lemma1}, $\left\{  e^{t\varepsilon\boldsymbol{L}}\right\}
_{t\geq0}$ is a Feller semigroup on ${\large C}\left(  \mathcal{K}%
_{N},\mathbb{R}\right)  $. By using the correspondence between Feller
semigroups and transition functions, there exists a uniformly stochastically
continuous $C_{0}$-transition function $p_{t}\left(  x,dy\right)  $ satisfying
condition $(L)$, see \cite[Theorem 2.10]{Taira}, such that
\[
e^{t\boldsymbol{L}}h_{0}\left(  x\right)  =\int\limits_{\mathcal{K}_{N}}%
h_{0}(y)p_{t}\left(  x,dy\right)  \text{ for }h_{0}\in{\large C}\left(
\mathcal{K}_{N},\mathbb{R}\right)  \text{,}%
\]
see e.g. \cite[Theorem 2.15]{Taira}. Now, by using the correspondence between
transition functions and Markov processes, there exists a strong Markov
process $\mathfrak{X}$ whose paths are right continuous and have no
discontinuities other than jumps, see e.g. \cite[Theorem 2.12]{Taira}.
\end{proof}

\subsection{\label{Sect_selfsimilarity_1} Self-similarity}

Theorem \ref{Theorem1} can be easily extended to a larger class of operators.
For instance, take $J(x,y)\in L^{\infty}\left(  \mathcal{K}_{N}\times
\mathcal{K}_{N},\mathbb{R}\right)  $, $J(x,y)\geq0$ and $\lambda\geq1$, and
set
\begin{equation}
\varepsilon L_{\lambda}\varphi\left(  x\right)  =\varepsilon\int
\limits_{\mathcal{K}_{N}}\left\{  \varphi(y)-\lambda\varphi\left(  x\right)
\right\}  J\left(  x,y\right)  dy. \label{Eq_ope_L_lambda}%
\end{equation}
Then $L_{\lambda}:{\large C}\left(  \mathcal{K}_{N},\mathbb{R}\right)
\rightarrow{\large C}\left(  \mathcal{K}_{N},\mathbb{R}\right)  $ is a linear
bounded operator, with $\left\Vert \varepsilon L_{\lambda}\right\Vert
\leq\left(  1+\lambda\right)  \varepsilon\left\Vert J\right\Vert _{\infty}$.
Notice that condition $\lambda\geq1$ is essential to assure that operator
$\varepsilon L_{\lambda}$ satisfies the positive maximum principle. Theorem
\ref{Theorem1} holds for the following Cauchy problem:%
\begin{equation}
\left\{
\begin{array}
[c]{l}%
h\left(  x,t\right)  \in{\large C}^{1}(\left(  0,\infty\right)  ,{\large C}%
(\mathcal{K}_{N},\mathbb{R}));\\
\\
\frac{\partial h\left(  x,t\right)  }{\partial t}=\varepsilon\boldsymbol{L}%
_{\lambda}h\left(  x,t\right)  \text{, \ }x\in\mathcal{K}_{N}\text{, }t>0;\\
\\
h\left(  x,0\right)  =h_{0}(x)\in{\large C}(\mathcal{K}_{N},\mathbb{R}).
\end{array}
\right.  \label{Eq_Family_p_adic_eq}%
\end{equation}
Then, for a fixed $J(x,y)\in L^{\infty}\left(  \mathcal{K}_{N}\times
\mathcal{K}_{N},\mathbb{R}\right)  $, $J(x,y)\geq0$,
(\ref{Eq_Family_p_adic_eq}) is a family of $p$-adic diffusion equations
parametrized by the set%
\[
\mathcal{P}:=\left\{  \left(  \varepsilon,\lambda\right)  \in\mathbb{R}%
_{+}^{2};\text{ }\varepsilon>0\text{, }\lambda\geq1\right\}  .
\]
We identify the family (\ref{Eq_Family_p_adic_eq}) with the set $\mathcal{P}$.
Now, for $\sigma\in\left(  0,1\right]  $, we define the mapping:%
\[%
\begin{array}
[c]{llll}%
S_{\sigma}: & \mathcal{P} & \rightarrow & \mathcal{P}\\
&  &  & \\
& \left(  \varepsilon,\lambda\right)  & \rightarrow & \left(  \sigma
\varepsilon,\sigma^{-1}\lambda\right)  .
\end{array}
\]
The set of all $S_{\sigma}$\ for $\sigma\in\left(  0,1\right]  $ is naturally
a monoid (denoted as $\mathcal{S}_{\mathcal{P}}$), under the composition of
functions. Therefore, we have established the following result:

\begin{theorem}
\label{Theorem1_fractal}The family $\mathcal{P}$ is invariant under the action
of the monoid $\mathcal{S}_{\mathcal{P}}$.
\end{theorem}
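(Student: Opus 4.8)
The plan is to prove the statement directly from the definitions: I would first check that each $S_\sigma$ genuinely maps $\mathcal{P}$ into itself, then confirm that $\{S_\sigma\}_{\sigma\in(0,1]}$ is closed under composition with $S_1$ as its identity, so that $\mathcal{S}_{\mathcal{P}}$ is a monoid acting on $\mathcal{P}$. Once the self-map property holds for every generator $S_\sigma$, the invariance of $\mathcal{P}$ under the full monoid is immediate, since every element of $\mathcal{S}_{\mathcal{P}}$ is a composite of such maps.

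First I would verify that $S_\sigma$ is well defined as a self-map of $\mathcal{P}$. Fix $\sigma\in(0,1]$ and take any $(\varepsilon,\lambda)\in\mathcal{P}$, so that $\varepsilon>0$ and $\lambda\geq1$. Because $\sigma>0$, the first coordinate $\sigma\varepsilon$ is again strictly positive. For the second coordinate, the hypothesis $0<\sigma\leq1$ gives $\sigma^{-1}\geq1$, and combining this with $\lambda\geq1$ yields $\sigma^{-1}\lambda\geq\lambda\geq1$. Hence $S_\sigma(\varepsilon,\lambda)=(\sigma\varepsilon,\sigma^{-1}\lambda)\in\mathcal{P}$, which is precisely the invariance of $\mathcal{P}$ under the single map $S_\sigma$.

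Next I would record the monoid structure by a direct computation. For $\sigma,\sigma'\in(0,1]$,
\[
(S_\sigma\circ S_{\sigma'})(\varepsilon,\lambda)=S_\sigma\left(\sigma'\varepsilon,(\sigma')^{-1}\lambda\right)=\left(\sigma\sigma'\varepsilon,(\sigma\sigma')^{-1}\lambda\right)=S_{\sigma\sigma'}(\varepsilon,\lambda),
\]
so $S_\sigma\circ S_{\sigma'}=S_{\sigma\sigma'}$; since $\sigma\sigma'\in(0,1]$, the composite again lies in $\mathcal{S}_{\mathcal{P}}$, and $S_1$ is the identity map. Thus $\mathcal{S}_{\mathcal{P}}$ is a monoid under composition, and the invariance of $\mathcal{P}$ under the entire monoid follows from the invariance under each generator established in the previous paragraph.

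I do not expect any substantive obstacle, as the statement is essentially a bookkeeping assertion; the one point deserving attention is the preservation of the constraint $\lambda\geq1$, which is exactly the reason the parameter range for $\sigma$ must be $(0,1]$ rather than all of $(0,\infty)$. Indeed, a scaling with $\sigma>1$ has $\sigma^{-1}<1$ and would drive $\lambda$ below $1$ (take $\lambda=1$ and any $\sigma>1$), pushing the pair outside $\mathcal{P}$ and, in view of the earlier remark, destroying the positive maximum principle required for $\varepsilon\boldsymbol{L}_{\lambda}$ to generate a $p$-adic diffusion. The content of the theorem is therefore that the admissible region $\mathcal{P}$ is \emph{stable} under the reciprocal scaling $(\varepsilon,\lambda)\mapsto(\sigma\varepsilon,\sigma^{-1}\lambda)$, which is exactly what the two elementary checks above establish.
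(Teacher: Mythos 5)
Your proposal is correct and matches the paper's treatment: the paper presents this theorem as an immediate consequence of the definitions (the only substantive point being that $\sigma\in(0,1]$ forces $\sigma^{-1}\lambda\geq 1$, preserving the positive maximum principle), which is exactly the verification you carry out. The added check that $S_\sigma\circ S_{\sigma'}=S_{\sigma\sigma'}$ confirms the monoid structure the paper asserts without computation, so nothing is missing.
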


\section{Discretizations}

\subsection{Some additional function spaces and operators}

Let $M$ be a positive integer satisfying $M\geq N$. We fix a system of
representatives $I_{j}$s for the quotient%
\[
G_{I}^{M}:=\left(  I+p^{N}\mathbb{Z}_{p}\right)  /p^{M}\mathbb{Z}_{p}.
\]
This means that%
\[
B_{-N}(I)=%
{\textstyle\bigsqcup\limits_{I_{j}\in G_{I}^{M}}}
B_{-M}\left(  I_{j}\right)  ,
\]
where $B_{-L}(J)=\left\{  x\in\mathbb{Q}_{p};\left\vert x-J\right\vert
_{p}\leq p^{-L}\right\}  $. Now, we set
\[
G_{N}^{M}:=%
{\textstyle\bigsqcup\limits_{I\in G_{N}^{0}}}
G_{I}^{M}.
\]

\begin{remark}
Notice that $G_{N}^{N}=G_{N}^{0}$, and thus $G_{N}^{N}$ can be identified with
$\mathcal{G}$.
\end{remark}

Since $\mathcal{K}_{N}$ is the disjoint union of the $I+p^{N}\mathbb{Z}_{p}$,
for $I\in G_{N}^{0}$,
\[
\mathcal{K}_{N}=%
{\textstyle\bigsqcup\limits_{I\in G_{N}^{0}}}
\text{ }%
{\textstyle\bigsqcup\limits_{I_{j}\in G_{I}^{M}}}
I_{j}+p^{M}\mathbb{Z}_{p}=%
{\textstyle\bigsqcup\limits_{I_{j}\in G_{N}^{M}}}
I_{j}+p^{M}\mathbb{Z}_{p}.
\]
We set $X_{M}$, $M\geq N$, to be the $\mathbb{R}$-vector space of all the test
functions supported in $\mathcal{K}_{N}$ of the form
\[
\varphi\left(  x\right)  =%
{\textstyle\sum\limits_{I_{j}\in G_{N}^{M}}}
\varphi\left(  I_{j}\right)  \Omega\left(  p^{M}\left\vert x-I_{j}\right\vert
_{p}\right)  \text{, }\varphi\left(  I_{j}\right)  \in\mathbb{R}\text{,}%
\]
endowed with the $\left\Vert \cdot\right\Vert _{\infty}$-norm. This is a real
Banach space. For our convenience, from now on, we set $X_{\infty}%
:={\large C}\left(  \mathcal{K}_{N},\mathbb{R}\right)  $ endowed with the
$\left\Vert \cdot\right\Vert _{\infty}$-norm. This is also a real Banach space.

For $M\geq N$, we define $\boldsymbol{P}_{M}\in\mathfrak{B}(X_{\infty},X_{M}%
)$, the bounded linear operators from $X_{\infty}$

into $X_{M}$, as%
\begin{equation}
\boldsymbol{P}_{M}\varphi\left(  x\right)  =%
{\textstyle\sum\limits_{I_{j}\in G_{N}^{M}}}
\varphi\left(  I_{j}\right)  \Omega\left(  p^{M}\left\vert x-I_{j}\right\vert
_{p}\right)  . \label{definition_P_M}%
\end{equation}
We denote by $\boldsymbol{E}_{M}:$ $X_{M}\hookrightarrow X_{\infty}$, $M\geq
N$, the natural continuous embedding. Notice that $\left\Vert \boldsymbol{E}%
_{M}\right\Vert \leq1$, and that $\boldsymbol{P}_{M}\boldsymbol{E}_{M}%
\varphi=\varphi$ for $\varphi\in X_{M}$, $M\geq N$.

\begin{notation}
Whenever this is possible, we will omit in our formulas the operator
$\boldsymbol{E}_{M}$, instead we will use the fact that $X_{M}\hookrightarrow
X_{\infty}$, $M\geq N$.
\end{notation}

\begin{lemma}
\label{Lemma0}With the above notation, the following assertions hold true: (i)
$\left\Vert \boldsymbol{P}_{M}\right\Vert \leq1$; (ii) $\lim_{M\rightarrow
\infty}\left\Vert \boldsymbol{P}_{M}\varphi-\varphi\right\Vert _{\infty}=0$
for $\varphi\in X_{\infty}$.
\end{lemma}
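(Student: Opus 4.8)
The plan is to treat the two assertions separately, both resting on the single structural fact that the balls appearing in the definition of $\boldsymbol{P}_{M}$ form a partition of $\mathcal{K}_{N}$.

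For (i), I would first observe that the balls $\left\{I_{j}+p^{M}\mathbb{Z}_{p}\right\}_{I_{j}\in G_{N}^{M}}$ are pairwise disjoint and that their union is $\mathcal{K}_{N}$, as recorded in the displayed decomposition of $\mathcal{K}_{N}$ preceding the definition of $X_{M}$. Hence for each fixed $x\in\mathcal{K}_{N}$ exactly one of the characteristic functions $\Omega\left(p^{M}\left\vert x-I_{j}\right\vert _{p}\right)$ equals $1$ and all the others vanish. Writing $I_{j}(x)$ for the representative of the unique ball containing $x$, the sum defining $\boldsymbol{P}_{M}\varphi$ collapses pointwise to $\boldsymbol{P}_{M}\varphi(x)=\varphi\left(I_{j}(x)\right)$. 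Since each value $\varphi\left(I_{j}(x)\right)$ is a value attained by $\varphi$ on $\mathcal{K}_{N}$, we obtain $\left\vert \boldsymbol{P}_{M}\varphi(x)\right\vert \leq\left\Vert \varphi\right\Vert _{\infty}$ for every $x$, whence $\left\Vert \boldsymbol{P}_{M}\varphi\right\Vert _{\infty}\leq\left\Vert \varphi\right\Vert _{\infty}$ and therefore $\left\Vert \boldsymbol{P}_{M}\right\Vert \leq1$.

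For (ii), the key input is that $\varphi$, being continuous on the compact set $\mathcal{K}_{N}$, is uniformly continuous. In the ultrametric setting this admits a clean reformulation: for every $\epsilon>0$ there is an integer $M_{0}\geq N$ such that $\left\vert x-y\right\vert _{p}\leq p^{-M_{0}}$ implies $\left\vert \varphi(x)-\varphi(y)\right\vert <\epsilon$. Now fix $M\geq M_{0}$ and $x\in\mathcal{K}_{N}$. Using the pointwise formula from the previous step, $\left\vert \boldsymbol{P}_{M}\varphi(x)-\varphi(x)\right\vert =\left\vert \varphi\left(I_{j}(x)\right)-\varphi(x)\right\vert$, and because $x$ and $I_{j}(x)$ lie in the same ball of radius $p^{-M}\leq p^{-M_{0}}$ we have $\left\vert x-I_{j}(x)\right\vert _{p}\leq p^{-M}$, so the right-hand side is $<\epsilon$. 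As this bound is independent of $x$, it follows that $\left\Vert \boldsymbol{P}_{M}\varphi-\varphi\right\Vert _{\infty}\leq\epsilon$ for all $M\geq M_{0}$, which yields the claimed limit.

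I do not expect a genuine obstacle here. The only point requiring care is to phrase uniform continuity in its correct $p$-adic form and to check that the resulting estimate is uniform in $x$, so that it controls the supremum norm rather than merely giving pointwise convergence; this is precisely where the compactness of $\mathcal{K}_{N}$ enters.
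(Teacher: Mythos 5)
Your proof is correct. Part (i) matches what the paper leaves as ``straightforward'': the balls $I_{j}+p^{M}\mathbb{Z}_{p}$ partition $\mathcal{K}_{N}$, so $\boldsymbol{P}_{M}\varphi(x)=\varphi(I_{j}(x))$ pointwise and the contraction bound is immediate. For part (ii), however, you take a genuinely different route from the paper. The paper argues by density: it invokes the fact that $\mathcal{D}(\mathcal{K}_{N},\mathbb{R})$ is dense in $X_{\infty}$ in the supremum norm, notes that every test function lies in some $X_{M}$ (on which $\boldsymbol{P}_{M}$ acts as the identity), and then concludes by the standard three-term estimate using $\left\Vert \boldsymbol{P}_{M}\right\Vert \leq1$ from part (i). You instead argue directly from the uniform continuity of $\varphi$ on the compact set $\mathcal{K}_{N}$, obtaining the bound $\left\vert \boldsymbol{P}_{M}\varphi(x)-\varphi(x)\right\vert=\left\vert \varphi(I_{j}(x))-\varphi(x)\right\vert<\epsilon$ uniformly in $x$ once $p^{-M}$ falls below the modulus-of-continuity scale. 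The two arguments are close cousins --- the density of locally constant functions in $C(\mathcal{K}_{N},\mathbb{R})$ is itself proved by exactly the uniform-continuity computation you carry out, so you have in effect inlined that density proof. What your version buys is self-containedness and an explicit rate (convergence is governed by the modulus of continuity of $\varphi$); what the paper's version buys is brevity and a template that transfers to any uniformly bounded family of projections that eventually fix a dense subspace, without reference to the pointwise formula for $\boldsymbol{P}_{M}$. Your closing remark correctly identifies the one point of care: compactness of $\mathcal{K}_{N}$ is what upgrades pointwise to uniform convergence.
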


\begin{proof}
The first part is straightforward. The second part is a consequence of the
following two facts: first, $\mathcal{D}(\mathcal{K}_{N},\mathbb{R})$ is dense
in $X_{\infty}$ in the $\left\Vert \cdot\right\Vert _{\infty}$-norm; second,
given a function $\varphi\in\mathcal{D}(\mathcal{K}_{N},\mathbb{R})$, there
exists an $M$ sufficiently large such that $\varphi\in X_{M}$. Alternatively,
the reader may see \cite[Lemma 1]{Zuniga-Nonlinear}.
\end{proof}

We now consider the real Banach spaces $X_{\infty}\oplus X_{\infty}$,
$X_{M}\oplus X_{M}$ for $M\geq N$, endowed with the norm $\left\Vert u\oplus
v\right\Vert :=\max\left\{  \left\Vert u\right\Vert _{\infty},\left\Vert
v\right\Vert _{\infty}\right\}  $. We will identify $u\oplus v$ with the
column vector $\left[
\begin{array}
[c]{l}%
u\\
v
\end{array}
\right]  $.

\subsection{Conditions on the nonlinearity}

With respect to the nonlinearity we assume the following. We fix $a$,
$b\in\mathbb{R}$, with $a<b$, and assume that%
\begin{equation}
\left\{
\begin{array}
[c]{cc}%
\text{(i)} & f,g:\left(  a,b\right)  \times\left(  a,b\right)  \rightarrow
\mathbb{R}\text{;}\\
& \\
\text{(ii)} & f,g\in C^{1}\left(  \left(  a,b\right)  \times\left(
a,b\right)  \right)  \text{;}\\
& \\
\text{(iii)} & \nabla f\left(  x,y\right)  \neq0\text{ and }\nabla g\left(
x,y\right)  \neq0\text{ for any }\left(  x,y\right)  \in\left(  a,b\right)
\times\left(  a,b\right)  \text{.}%
\end{array}
\right.  \tag{Hypothesis 1}%
\end{equation}
Now we define%
\begin{equation}
U=\left\{  v\in X_{\infty};a<v\left(  x\right)  <b\text{ for any }%
x\in\mathcal{K}_{N}\right\}  . \label{Definition_set_U}%
\end{equation}
Notice that $U$ is an open set in $X_{\infty}$. Indeed, take $\delta>0$
sufficiently small and $v\in U$, if%
\[
h\in B\left(  v,\delta\right)  =\left\{  h\in X_{\infty};\left\Vert
v-h\right\Vert _{\infty}<\delta\right\}  ,
\]
then
\[
a<-\delta+\min_{x\in\mathcal{K}_{N}}v(x)<h(x)<\delta+\max_{x\in\mathcal{K}%
_{N}}v(x)<b,
\]
for $\delta$ sufficiently small.

By $\left[
\begin{array}
[c]{l}%
f(u,v)\\
g(u,v)
\end{array}
\right]  $, with $u\oplus v\in U\oplus U$, we mean the mapping%
\begin{equation}%
\begin{array}
[c]{llll}%
\left[
\begin{array}
[c]{l}%
f\\
g
\end{array}
\right]  : & U\oplus U & \rightarrow & \mathbb{R\oplus R}\\
&  &  & \\
& u\oplus v & \rightarrow & f(u,v)\mathbb{\oplus}g(u,v).
\end{array}
\label{Definition_map_f_g}%
\end{equation}

\subsection{Two Cauchy problems}

We denote by $\varepsilon\boldsymbol{L}\left[
\begin{array}
[c]{ll}%
1 & 0\\
0 & d
\end{array}
\right]  $ the operator \ acting on $X_{\infty}\oplus X_{\infty}$ as%
\[
\varepsilon\boldsymbol{L}\left[
\begin{array}
[c]{ll}%
1 & 0\\
0 & d
\end{array}
\right]  \left[
\begin{array}
[c]{l}%
u\\
v
\end{array}
\right]  =\left[
\begin{array}
[c]{l}%
\varepsilon\boldsymbol{L}u\\
\varepsilon d\boldsymbol{L}v
\end{array}
\right]  .
\]
In the next sections, we will study the following Cauchy problem, for some
$\tau>0$ fixed:%
\begin{equation}
\left\{
\begin{array}
[c]{l}%
\frac{\partial}{\partial t}\left[
\begin{array}
[c]{l}%
u\left(  t\right) \\
\\
v\left(  t\right)
\end{array}
\right]  =\left[
\begin{array}
[c]{l}%
f(u\left(  t\right)  ,v\left(  t\right)  )\\
\\
g(u\left(  t\right)  ,v\left(  t\right)  )
\end{array}
\right]  +\left[
\begin{array}
[c]{l}%
\varepsilon\boldsymbol{L}u\left(  t\right) \\
\\
\varepsilon d\boldsymbol{L}v\left(  t\right)
\end{array}
\right]  \text{,}\\
\\
t\in\left[  0,\tau\right)  \text{, }x\in\mathcal{K}_{N};\\
\\
u\left(  0\right)  \oplus v\left(  0\right)  =u_{0}\oplus v_{0}\in U\oplus U.
\end{array}
\right.  \label{Eq_14}%
\end{equation}
Also, we will study the Cauchy problem for the following discretization of
(\ref{Eq_14}), with $\boldsymbol{L}_{M}=\boldsymbol{L}\mid_{X_{M}}$:%
\begin{equation}
\left\{
\begin{array}
[c]{l}%
\frac{\partial}{\partial t}\left[
\begin{array}
[c]{l}%
u^{\left(  M\right)  }\left(  t\right) \\
\\
v^{\left(  M\right)  }\left(  t\right)
\end{array}
\right]  =\left[
\begin{array}
[c]{l}%
P_{M}f(E_{M}u^{\left(  M\right)  }\left(  t\right)  ,E_{M}v^{\left(  M\right)
}\left(  t\right)  )\\
\\
P_{M}g(E_{M}u^{\left(  M\right)  }\left(  t\right)  ,E_{M}v^{\left(  M\right)
}\left(  t\right)  )
\end{array}
\right]  +\left[
\begin{array}
[c]{l}%
\varepsilon\boldsymbol{L}_{M}u^{\left(  M\right)  }\left(  t\right) \\
\\
\varepsilon d\boldsymbol{L}_{M}v^{\left(  M\right)  }\left(  t\right)
\end{array}
\right]  ,\\
\\
t\in\left[  0,\tau\right)  \text{, }x\in\mathcal{K}_{N};\\
\\
u^{\left(  M\right)  }\left(  0\right)  \oplus v^{\left(  M\right)  }\left(
0\right)  \in U\cap X_{M}\oplus U\cap X_{M}.
\end{array}
\right.  \label{Eq_15}%
\end{equation}

\begin{remark}
Notice that
\begin{equation}
\left[
\begin{array}
[c]{l}%
P_{M}f(E_{M}u^{\left(  M\right)  }\left(  t\right)  ,E_{M}v^{\left(  M\right)
}\left(  t\right)  )\\
\\
P_{M}g(E_{M}u^{\left(  M\right)  }\left(  t\right)  ,E_{M}v^{\left(  M\right)
}\left(  t\right)  )
\end{array}
\right]  =\left[
\begin{array}
[c]{l}%
f(u^{\left(  M\right)  }\left(  t\right)  ,v^{\left(  M\right)  }\left(
t\right)  )\\
\\
g(u^{\left(  M\right)  }\left(  t\right)  ,v^{\left(  M\right)  }\left(
t\right)  )
\end{array}
\right]  . \label{Eq_nota}%
\end{equation}

\end{remark}

\section{$p$-Adic diffusion in $X_{M}$ and self-similarity}

We first observe that any function $h:G_{N}^{M}\rightarrow\mathbb{R}$,
$I_{j}\rightarrow h\left(  I_{j}\right)  $ can be uniquely identified with an
element of $X_{M}$:%
\[
h(x)=\sum_{I_{j}\in G_{N}^{M}}h\left(  I_{j}\right)  \Omega\left(
p^{M}\left\vert x-I_{j}\right\vert _{p}\right)  \text{, }x\in\mathcal{K}_{N}.
\]
In this way, $X_{M}$ is naturally the space of continuous real-valued
functions on$\ G_{N}^{M}$.

\begin{remark}
\label{Nota_Lemma1}Lemma \ref{Lemma1} holds true in $X_{M}$, $M\geq N$, i.e.
the operator $\varepsilon\boldsymbol{L}_{M}$ generates\ a strongly continuous,
positive, contraction\ semigroup $\left\{  e^{t\varepsilon\boldsymbol{L}_{M}%
}\right\}  _{t\geq0}$ on ${\large X}_{M}$.
\end{remark}

Now, by Theorem \ref{Theorem1}, $\boldsymbol{L}:{\large X}_{\infty}%
\rightarrow{\large X}_{\infty}$ generates a Feller semigroup, since
${\large X}_{M}\hookrightarrow{\large X}_{\infty}$, $\boldsymbol{L}%
_{M}=\boldsymbol{L}\mid_{X_{M}}$ and $\boldsymbol{L}_{M}:{\large X}%
_{M}\rightarrow{\large X}_{M}$, we have $\boldsymbol{L}_{M}$ generates a
Feller semigroup on $X_{M}$ and Theorem \ \ref{Theorem1} holds in $X_{M}$:

\begin{theorem}
\label{Theorem1_A}There exists a probability measure $p_{t}^{(M)}\left(
x,\cdot\right)  $, $t\geq0$, $x\in\mathcal{K}_{N}$, on the Borel $\sigma
$-algebra of $\mathcal{K}_{N}$, such that the Cauchy problem:%
\begin{equation}
\left\{
\begin{array}
[c]{l}%
h^{\left(  M\right)  }\left(  x,t\right)  \in{\large C}^{1}(\left(
0,\infty\right)  ,{\large X}_{M})\\
\\
\frac{\partial h^{\left(  M\right)  }\left(  x,t\right)  }{\partial
t}=\varepsilon\boldsymbol{L}_{M}h^{\left(  M\right)  }\left(  x,t\right)
\text{, \ }x\in\mathcal{K}_{N}\text{, }t>0\\
\\
h^{\left(  M\right)  }\left(  x,0\right)  =h_{0}^{\left(  M\right)  }%
(x)\in{\large X}_{M}%
\end{array}
\right.  \label{Eq_p_adic_dif_XM}%
\end{equation}
has a unique solution of the form%
\[
h^{\left(  M\right)  }(x,t)=\int\limits_{\mathcal{K}_{N}}h_{0}^{\left(
M\right)  }(y)p_{t}^{\left(  M\right)  }\left(  x,dy\right)  .
\]
In addition, $p_{t}^{\left(  M\right)  }\left(  x,\cdot\right)  $ is the
transition function of a Markov process $\mathfrak{X}^{\left(  M\right)  }$
whose paths are right continuous and have no discontinuities other than jumps.
\end{theorem}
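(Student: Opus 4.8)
The plan is to reproduce the argument of Theorem \ref{Theorem1} almost verbatim, replacing $X_{\infty}=C(\mathcal{K}_{N},\mathbb{R})$ by $X_{M}$ and $\boldsymbol{L}$ by $\boldsymbol{L}_{M}$, the only new input being Remark \ref{Nota_Lemma1} in place of Lemma \ref{Lemma1}. First I would record the key invariance: since $\boldsymbol{L}_{M}=\boldsymbol{L}\mid_{X_{M}}$ maps the finite-dimensional space $X_{M}$ into itself and $\boldsymbol{L}$ is bounded, the series $e^{t\varepsilon\boldsymbol{L}}=\sum_{n\geq0}(t\varepsilon\boldsymbol{L})^{n}/n!$ leaves $X_{M}$ invariant and restricts there to $e^{t\varepsilon\boldsymbol{L}_{M}}$. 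This identifies the semigroup generated by $\varepsilon\boldsymbol{L}_{M}$ with the restriction to $X_{M}$ of the Feller semigroup produced in Theorem \ref{Theorem1}.

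Next I would invoke Remark \ref{Nota_Lemma1}: by the Hille--Yosida--Ray Theorem \ref{Hille-Yosida-Ray-Theorem}, $\varepsilon\boldsymbol{L}_{M}$ generates a strongly continuous, positive, contraction semigroup $\{e^{t\varepsilon\boldsymbol{L}_{M}}\}_{t\geq0}$ on $X_{M}$; positivity together with the contraction bound gives $0\leq e^{t\varepsilon\boldsymbol{L}_{M}}h\leq1$ whenever $0\leq h\leq1$, so this semigroup is Feller, and since $\boldsymbol{L}_{M}1=0$ it is conservative, whence each measure produced below has total mass one. Identifying $X_{M}$ with $C(G_{N}^{M},\mathbb{R})$, the continuous functions on the finite (hence compact) set $G_{N}^{M}$, I would then apply the correspondence between Feller semigroups and transition functions, see \cite[Theorems 2.10, 2.15]{Taira}, to obtain a uniformly stochastically continuous $C_{0}$-transition function $p_{t}^{(M)}(x,dy)$ with
\[
e^{t\varepsilon\boldsymbol{L}_{M}}h_{0}^{(M)}(x)=\int_{\mathcal{K}_{N}}h_{0}^{(M)}(y)\,p_{t}^{(M)}(x,dy),\qquad h_{0}^{(M)}\in X_{M},
\]
and then the correspondence between transition functions and Markov processes, see \cite[Theorem 2.12]{Taira}, to produce the process $\mathfrak{X}^{(M)}$ with right-continuous paths whose only discontinuities are jumps. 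Uniqueness of the solution follows, exactly as in Theorem \ref{Theorem1}, from the fact that $\varepsilon\boldsymbol{L}_{M}$ is the bounded infinitesimal generator of $\{e^{t\varepsilon\boldsymbol{L}_{M}}\}_{t\geq0}$.

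The one point requiring care is the state space on which the measure is declared to live: functions in $X_{M}$ are constant on the balls $I_{j}+p^{M}\mathbb{Z}_{p}$, so $\boldsymbol{L}_{M}$ most naturally describes a continuous-time Markov chain on the finite quotient $G_{N}^{M}$ rather than on all of $\mathcal{K}_{N}$, whereas the statement asks for a probability measure on the Borel $\sigma$-algebra of $\mathcal{K}_{N}$. I would resolve this using the invariance noted in the first paragraph: because $e^{t\varepsilon\boldsymbol{L}}$ preserves $X_{M}$ and restricts there to $e^{t\varepsilon\boldsymbol{L}_{M}}$, one may simply take $p_{t}^{(M)}(x,\cdot)$ to be the transition function $p_{t}(x,\cdot)$ of Theorem \ref{Theorem1}, a genuine probability measure on $\mathcal{K}_{N}$, and set $\mathfrak{X}^{(M)}=\mathfrak{X}$; the representation formula and the path regularity are then inherited directly, since the displayed identity above already holds for every $h_{0}^{(M)}\in X_{M}\subseteq C(\mathcal{K}_{N},\mathbb{R})$. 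Equivalently, one transports the chain on $G_{N}^{M}$ to $\mathcal{K}_{N}$ by identifying each representative $I_{j}$ with the ball $I_{j}+p^{M}\mathbb{Z}_{p}$. I expect this bookkeeping---reconciling the finite-dimensional reduction of $X_{M}$ with Taira's framework, which is phrased for $C$ of a compact space---to be the only genuinely delicate step; everything else is a transcription of the proof of Theorem \ref{Theorem1}.
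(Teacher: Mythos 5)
Your proposal matches the paper's argument: the paper likewise derives Theorem \ref{Theorem1_A} by observing that $\boldsymbol{L}_{M}=\boldsymbol{L}\mid_{X_{M}}$ maps $X_{M}$ into itself, invoking Remark \ref{Nota_Lemma1} to get a positive contraction (hence Feller) semigroup on $X_{M}$, and then reusing the Taira correspondences exactly as in Theorem \ref{Theorem1}. Your extra paragraph reconciling the finite state space $G_{N}^{M}$ with the Borel $\sigma$-algebra of $\mathcal{K}_{N}$ is a point the paper only gestures at (``the nature of the points of $G_{N}^{M}$ is not relevant''), and your resolution via the restriction of the semigroup of Theorem \ref{Theorem1} is consistent with the intended reading.
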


If $M=N$, then $G_{N}^{M}=G_{N}^{0}$ and Theorem \ref{Theorem1_A} describes
the diffusion mechanism in $X_{N}$, which is exactly the diffusion mechanism
in the original discrete system (\ref{EQ_1}). In the construction of the
Markov process attached to (\ref{Eq_p_adic_dif_XM}) the nature of the points
of $G_{N}^{M}$ is not relevant (consequently in the case $M=N$, the embedding
of $\mathcal{G}$ into $\mathbb{Q}_{p}$ is not relevant). What matters is the
existence of a Feller semigroup acting on a suitable space of continuous
functions on $X_{M}$.

\subsection{\label{Section_selsimilarity_XM} Self-similarity}

We take $J(x,y)=J_{N}(x,y)$ and consider operators $L_{\lambda}$, $\lambda
\geq1$, introduced in Section \ref{Sect_selfsimilarity_1}, see
(\ref{Eq_ope_L_lambda}). We define, for $M\geq N$,
\begin{equation}
L_{M,\lambda}=L_{\lambda}\mid_{X_{M}}. \label{Eq_ope_L_lambda_XM}%
\end{equation}
This definition makes sense since $X_{M}\hookrightarrow X_{\infty}.$
Furthermore, $L_{M,\lambda}:X_{M}\rightarrow X_{M}$ is a bounded linear
operator. Theorem \ref{Theorem1_A} holds true for the following Cauchy
problem:
\begin{equation}
\left\{
\begin{array}
[c]{l}%
h^{\left(  M\right)  }\left(  x,t\right)  \in{\large C}^{1}(\left(
0,\infty\right)  ,{\large X}_{M})\\
\\
\frac{\partial h^{\left(  M\right)  }\left(  x,t\right)  }{\partial
t}=\varepsilon\boldsymbol{L}_{M,\lambda}h^{\left(  M\right)  }\left(
x,t\right)  \text{, \ }x\in\mathcal{K}_{N}\text{, }t>0\\
\\
h^{\left(  M\right)  }\left(  x,0\right)  =h_{0}^{\left(  M\right)  }%
(x)\in{\large X}_{M}.
\end{array}
\right.  \label{Eq_Family_p_adic_eq_2}%
\end{equation}
Thus, (\ref{Eq_Family_p_adic_eq_2}) is a family of $p$-adic diffusion
equations on $X_{M}$ parametrized by the set%
\[
\mathcal{P}_{M}:=\left\{  \left(  \varepsilon,\lambda\right)  \in
\mathbb{R}_{+}^{2};\text{ }\varepsilon>0\text{, }\lambda\geq1\right\}  .
\]
We identify the family (\ref{Eq_Family_p_adic_eq_2}) with the set
$\mathcal{P}_{M}$. By defining $\mathcal{S}_{\mathcal{P}_{M}}$ in an analogous
way to $\mathcal{S}_{\mathcal{P}}$, we obtain the following result:

\begin{theorem}
\label{Theorem1_fractal_A}The family $\mathcal{P}_{M}$ is invariant under the
action of the monoid $\mathcal{S}_{\mathcal{P}_{M}}$.
\end{theorem}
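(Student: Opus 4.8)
The plan is to mirror exactly the argument that established Theorem~\ref{Theorem1_fractal}, since $\mathcal{P}_M$ is literally the same parameter set as $\mathcal{P}$ and $\mathcal{S}_{\mathcal{P}_M}$ is built from the same rescaling maps $S_\sigma$, $\sigma\in(0,1]$. First I would record that the assignment $(\varepsilon,\lambda)\mapsto\boldsymbol{L}_{M,\lambda}=L_\lambda\mid_{X_M}$ of (\ref{Eq_ope_L_lambda_XM}) is well defined, i.e. $\boldsymbol{L}_{M,\lambda}:X_M\to X_M$ is a bounded linear operator, and that for every $(\varepsilon,\lambda)\in\mathcal{P}_M$ the Cauchy problem (\ref{Eq_Family_p_adic_eq_2}) is a genuine $p$-adic diffusion equation on $X_M$ — this is precisely the content of the extension of Theorem~\ref{Theorem1_A} recorded just above, and it is here that the constraint $\lambda\ge 1$ is used, since $\lambda\ge 1$ is what guarantees that $\varepsilon\boldsymbol{L}_{M,\lambda}$ satisfies the positive maximum principle. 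Thus each point of $\mathcal{P}_M$ genuinely indexes one of the diffusion equations in the family, so that the invariance assertion has content.

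Next I would check that $\mathcal{S}_{\mathcal{P}_M}$ is a monoid. A direct computation gives the composition law
\[
S_\sigma\circ S_{\sigma'}(\varepsilon,\lambda)=S_\sigma\left(\sigma'\varepsilon,(\sigma')^{-1}\lambda\right)=\left(\sigma\sigma'\varepsilon,(\sigma\sigma')^{-1}\lambda\right)=S_{\sigma\sigma'}(\varepsilon,\lambda),
\]
and since $\sigma,\sigma'\in(0,1]$ forces $\sigma\sigma'\in(0,1]$, the collection $\{S_\sigma\}_{\sigma\in(0,1]}$ is closed under composition; the identity element is $S_1=\mathrm{id}$. Hence $\mathcal{S}_{\mathcal{P}_M}$ is a monoid under composition, exactly as for $\mathcal{S}_{\mathcal{P}}$, and these two identities are precisely what is needed for $\mathcal{S}_{\mathcal{P}_M}$ to \emph{act} on $\mathcal{P}_M$.

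Finally I would verify the invariance itself. Fix $\sigma\in(0,1]$ and $(\varepsilon,\lambda)\in\mathcal{P}_M$, so $\varepsilon>0$ and $\lambda\ge 1$. Then $S_\sigma(\varepsilon,\lambda)=(\sigma\varepsilon,\sigma^{-1}\lambda)$ satisfies $\sigma\varepsilon>0$ and, because $\sigma\le 1$ implies $\sigma^{-1}\ge 1$, also $\sigma^{-1}\lambda\ge\lambda\ge 1$. Therefore $S_\sigma(\varepsilon,\lambda)\in\mathcal{P}_M$, i.e. $S_\sigma(\mathcal{P}_M)\subseteq\mathcal{P}_M$ for every $S_\sigma\in\mathcal{S}_{\mathcal{P}_M}$, which is the desired invariance.

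I do not expect a genuine obstacle here: the statement is essentially set-theoretic, and the only point requiring care is the interplay of the two constraints $\lambda\ge 1$ and $\sigma\in(0,1]$. The restriction $\sigma\le 1$ is exactly what keeps the second coordinate $\sigma^{-1}\lambda$ inside the admissible range $[1,\infty)$; were one to allow $\sigma>1$, that coordinate could drop below $1$, the operator $\varepsilon\boldsymbol{L}_{M,\lambda}$ would fail the positive maximum principle, and the image would leave $\mathcal{P}_M$. This is also the structural reason why $\mathcal{S}_{\mathcal{P}_M}$ is only a monoid rather than a group.
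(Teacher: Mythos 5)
Your proof is correct and follows essentially the same (largely implicit) argument the paper uses: the theorem is the immediate set-theoretic observation that $(\varepsilon,\lambda)\mapsto(\sigma\varepsilon,\sigma^{-1}\lambda)$ preserves the constraints $\varepsilon>0$ and $\lambda\geq 1$ when $\sigma\in(0,1]$, with the positive maximum principle being the reason the constraint $\lambda\geq1$ matters. Your added checks of the monoid law $S_{\sigma}\circ S_{\sigma'}=S_{\sigma\sigma'}$ and of why $\sigma>1$ must be excluded are consistent with, and slightly more explicit than, the paper's presentation.
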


\begin{remark}
\label{Nota_replica}Assume that $\left(  \varepsilon,1\right)  \in
\mathcal{P}_{N}$, i.e. the equation%
\[
\frac{\partial h_{I}^{\left(  N\right)  }}{\partial t}=\varepsilon%
{\textstyle\sum\limits_{J\in G_{N}^{0}}}
\left\{  A_{IJ}-\gamma_{I}\delta_{IJ}\right\}  h_{J}^{\left(  N\right)
}\text{, }I\in G_{N}^{0}\text{,}%
\]
is a $p$-adic diffusion equation, then for any $0<\sigma\leq1$, $\left(
\varepsilon\sigma,\sigma^{-1}\right)  \in\mathcal{P}_{N}$, i.e. the equation%
\[
\frac{\partial h_{I}^{\left(  N\right)  }}{\partial t}=\varepsilon%
{\textstyle\sum\limits_{J\in G_{N}^{0}}}
\left\{  \sigma A_{IJ}-\gamma_{I}\delta_{IJ}\right\}  h_{J}^{\left(  N\right)
}\text{, }I\in G_{N}^{0}\text{,}%
\]
is a $p$-adic diffusion equation.
\end{remark}

\section{The Cauchy problem in $X_{\bullet}$}

In this section we study in unified form the Cauchy problems (\ref{Eq_14}%
)-(\ref{Eq_15}). We use the following notation:%
\[
X_{\bullet}:=\left\{
\begin{array}
[c]{lll}%
X_{\infty} & \text{if} & \bullet=\infty\\
&  & \\
X_{M} & \text{if} & \bullet=M
\end{array}
\right.  \text{, \ \ \ \ \ }\boldsymbol{L}_{\bullet}:=\left\{
\begin{array}
[c]{lll}%
\boldsymbol{L} & \text{if} & \bullet=\infty\\
&  & \\
\boldsymbol{L}_{M} & \text{if} & \bullet=M
\end{array}
\right.  \text{,}%
\]
and $u^{\left(  \bullet\right)  }\left(  t\right)  \oplus v^{\left(
\bullet\right)  }\left(  t\right)  $ means $u\left(  t\right)  \oplus v\left(
t\right)  $ if $\bullet=\infty$.\ By using this notation and (\ref{Eq_nota}),
the Cauchy problems (\ref{Eq_14})-(\ref{Eq_15}), with initial data in $U\cap
X_{N}\oplus U\cap X_{N}$, can be written as%
\begin{equation}
\left\{
\begin{array}
[c]{l}%
\frac{\partial}{\partial t}\left[
\begin{array}
[c]{l}%
u^{\left(  \bullet\right)  }\left(  t\right) \\
\\
v^{\left(  \bullet\right)  }\left(  t\right)
\end{array}
\right]  =\left[
\begin{array}
[c]{l}%
f(u^{\left(  \bullet\right)  }\left(  t\right)  ,v^{\left(  \bullet\right)
}\left(  t\right)  )\\
\\
g(u^{\left(  \bullet\right)  }\left(  t\right)  ,v^{\left(  \bullet\right)
}\left(  t\right)  )
\end{array}
\right]  +\left[
\begin{array}
[c]{l}%
\varepsilon\boldsymbol{L}_{\bullet}u^{\left(  \bullet\right)  }\left(
t\right) \\
\\
\varepsilon d\boldsymbol{L}_{\bullet}v^{\left(  \bullet\right)  }\left(
t\right)
\end{array}
\right]  ,\\
\\
t\in\left[  0,\tau\right)  \text{, }x\in\mathcal{K}_{N};\\
\\
u^{\left(  \bullet\right)  }\left(  0\right)  \oplus v^{\left(  \bullet
\right)  }\left(  0\right)  \in U\cap X_{N}\oplus U\cap X_{N}.
\end{array}
\right.  \label{Eq_18}%
\end{equation}

\subsection{Mild solutions}

We review the notion of mild solutions for the Cauchy problem (\ref{Eq_18}).
We will follow \cite[Chapter 5]{Milan}. We use the following conditions:

\paragraph{\noindent\textbf{Condition AS1}}

$X_{\bullet}\oplus X_{\bullet}$ is a real Banach space.

\paragraph{\noindent\textbf{Condition AS2}}

The operator $\left[
\begin{array}
[c]{l}%
\varepsilon\boldsymbol{L}_{\bullet}\\
\varepsilon d\boldsymbol{L}_{\bullet}%
\end{array}
\right]  $ is the generator of a strongly continuous semigroup $\left\{
e^{\varepsilon t\boldsymbol{L}_{\bullet}}\right\}  _{t\geq0}\oplus\left\{
e^{\varepsilon dt\boldsymbol{L}_{\bullet}}\right\}  _{t\geq0}$ satisfying
\[
\left\Vert e^{\varepsilon t\boldsymbol{L}_{\bullet}}\oplus e^{\varepsilon
dt\boldsymbol{L}_{\bullet}}\right\Vert \leq1\text{ for }t\geq0.
\]
This condition follows from Lemma \ref{Lemma1} and Remark \ref{Nota_Lemma1}.

\paragraph{\noindent\textbf{Condition AS3}}

Let $U\subset X_{\infty}$ be the open set defined in (\ref{Definition_set_U}),
and let
\[
\left[
\begin{array}
[c]{l}%
f\\
g
\end{array}
\right]  :\left(  U\oplus U\right)  \rightarrow X_{\infty}\oplus X_{\infty}%
\]
be the continuous mapping defined in (\ref{Definition_map_f_g}). Then for each
$u_{0}\oplus v_{0}\in U\oplus U$, there exist $\delta>0$ and $L<\infty$ such
that%
\begin{equation}
\left\Vert \left[
\begin{array}
[c]{l}%
f(u_{1},v_{1})\\
\\
g(u_{1},v_{1})
\end{array}
\right]  -\left[
\begin{array}
[c]{l}%
f(u_{2},v_{2})\\
\\
g(u_{2},v_{2})
\end{array}
\right]  \right\Vert \leq L\left\Vert \left(  u_{1}-u_{2}\right)
\oplus\left(  v_{1}-v_{2}\right)  \right\Vert , \label{Lipschitz_condition}%
\end{equation}
for $u_{1}\oplus v_{1}$, $u_{2}\oplus v_{2}$ in the ball $B\left(  u_{0}\oplus
v_{0},\delta\right)  $. This result follows from the Taylor formula applied to
$f$ and $g$ and Hypothesis 1. Indeed, take $\delta>0$ such that $u_{1}\oplus
v_{1}$, $u_{2}\oplus v_{2}\in B(u_{0}\oplus v_{0},\delta)\subset U\oplus U$.
This implies
\[
\left\Vert u_{i}-u_{0}\right\Vert _{\infty}<\delta\text{ and }\left\Vert
v_{i}-v_{0}\right\Vert _{\infty}<\delta\text{, }i=1,2\text{,}%
\]
and by using the definition of $U$,%
\[
a<u_{i}<b\text{ and }a<v_{i}<b\text{, }i=1,2\text{.}%
\]
Now by using the Taylor formula, for $h\in C^{1}\left(  \left(  a,b\right)
\times\left(  a,b\right)  \right)  $, with $\left(  u_{1},v_{1}\right)  $,
$\left(  u_{1}+h_{1},v_{1}+h_{2}\right)  \in\left(  a,b\right)  \times\left(
a,b\right)  $,
\begin{multline*}
h(u_{1}+h_{1},v_{1}+h_{2})=h\left(  u_{1},v_{1}\right)  +\frac{\partial
h\left(  u_{1},v_{1}\right)  }{\partial u}h_{1}+\frac{\partial h\left(
u_{1},v_{1}\right)  }{\partial v}h_{2}\\
+\sqrt{h_{1}^{2}+h_{2}^{2}}\text{ }E(\left(  u_{1}+h_{1},v_{1}+h_{2}\right)  ,
\end{multline*}
where $E$ is a continuous function near to $\left(  u_{1},v_{1}\right)  $, we
obtain%
\begin{multline*}
\left\Vert f\left(  u_{1},v_{1}\right)  -f\left(  u_{2},v_{2}\right)
\right\Vert _{\infty}<\\
2\delta\left\{  \sup_{\substack{a<u_{1}<b\\<a<v_{1}<b}}\left\vert
\frac{\partial f\left(  u_{1},v_{1}\right)  }{\partial u}\right\vert
+\sup_{\substack{a<u_{1}<b\\<a<v_{1}<b}}\left\vert \frac{\partial f\left(
u_{1},v_{1}\right)  }{\partial v}\right\vert \right\}  +\sqrt{2}\delta
\sup_{\substack{a<u_{2}<b\\<a<v_{2}<b}}\left\vert E\left(  u_{2},v_{2}\right)
\right\vert .
\end{multline*}
A similar estimation holds for $g\left(  u_{1},v_{1}\right)  -g\left(
u_{2},v_{2}\right)  $.

Notice that our nonlinearity does not depend on the temporal variable. For
this reason, we are using \ a simpler condition than the one used in
\cite[Section 5.2]{Milan}.

\begin{remark}
Take
\begin{equation}
\left[
\begin{array}
[c]{l}%
f\\
g
\end{array}
\right]  :\left(  U\cap X_{M}\oplus U\cap X_{M}\right)  \rightarrow
X_{M}\oplus X_{M}\text{,} \label{map_f_g_X_M}%
\end{equation}
since $X_{M}\hookrightarrow X_{\infty}$, condition (\ref{Lipschitz_condition})
holds true for map (\ref{map_f_g_X_M}).
\end{remark}

\begin{definition}
For $\tau_{0}\in\left(  0,\tau\right]  $, let $\mathcal{S}_{\text{Mild}%
}\left(  \tau_{0},X_{\bullet}\oplus X_{\bullet}\right)  $ be the collection of
all $u^{\left(  \bullet\right)  }\oplus v^{\left(  \bullet\right)  }\in
C\left(  \left[  0,\tau_{0}\right)  ,U\cap X_{\bullet}\oplus U\cap X_{\bullet
}\right)  $ which satisfy%
\[
\int\nolimits_{0}^{t}u^{\left(  \bullet\right)  }\left(  s\right)  ds\in
Dom\left(  \varepsilon\boldsymbol{L}_{\bullet}\right)  =X_{\bullet}\text{
\ and \ }\int\nolimits_{0}^{t}v^{\left(  \bullet\right)  }\left(  s\right)
ds\in Dom\left(  \varepsilon d\boldsymbol{L}_{\bullet}\right)  =X_{\bullet
}\text{,}%
\]
and%
\[
\left\{
\begin{array}
[c]{ccc}%
u^{\left(  \bullet\right)  }\left(  t\right)  -u^{\left(  \bullet\right)
}\left(  0\right)  +\varepsilon\boldsymbol{L}_{\bullet}\int\nolimits_{0}%
^{t}u^{\left(  \bullet\right)  }\left(  s\right)  ds & = & \int\nolimits_{0}%
^{t}f\left(  u^{\left(  \bullet\right)  }\left(  s\right)  ,v^{\left(
\bullet\right)  }\left(  s\right)  \right)  ds\\
&  & \\
v^{\left(  \bullet\right)  }\left(  t\right)  -v^{\left(  \bullet\right)
}\left(  0\right)  +\varepsilon d\boldsymbol{L}_{\bullet}\int\nolimits_{0}%
^{t}v^{\left(  \bullet\right)  }\left(  s\right)  ds & = & \int\nolimits_{0}%
^{t}g\left(  u^{\left(  \bullet\right)  }\left(  s\right)  ,v^{\left(
\bullet\right)  }\left(  s\right)  \right)  ds,
\end{array}
\right.
\]
for $t\in\left[  0,\tau_{0}\right)  $. The elements of $\mathcal{S}%
_{\text{Mild}}\left(  \tau_{0},X_{\bullet}\oplus X_{\bullet}\right)  $ are the
called mild solutions of (\ref{Eq_18}).
\end{definition}

By using well-known results from semigroup theory, see e.g. \cite[Theorem
4.7.3]{Milan}, we have $u^{\left(  \bullet\right)  }\oplus v^{\left(
\bullet\right)  }\in\mathcal{S}_{\text{Mild}}\left(  \tau_{0},X_{\bullet
}\oplus X_{\bullet}\right)  $ if and only if
\begin{equation}
u^{\left(  \bullet\right)  }\oplus v^{\left(  \bullet\right)  }\in C\left(
\left[  0,\tau_{0}\right)  ,U\cap X_{\bullet}\oplus U\cap X_{\bullet}\right)
\label{Mild_sol_1}%
\end{equation}
and
\begin{equation}
\left\{
\begin{array}
[c]{ccc}%
u^{\left(  \bullet\right)  }\left(  t\right)  & = & e^{\varepsilon
t\boldsymbol{L}_{\bullet}}u^{\left(  \bullet\right)  }\left(  0\right)
+\int\nolimits_{0}^{t}e^{\varepsilon\left(  t-s\right)  \boldsymbol{L}%
_{\bullet}}f\left(  u^{\left(  \bullet\right)  }\left(  s\right)  ,v^{\left(
\bullet\right)  }\left(  s\right)  \right)  ds\\
&  & \\
v^{\left(  \bullet\right)  }\left(  t\right)  & = & e^{\varepsilon
dt\boldsymbol{L}_{\bullet}}v^{\left(  \bullet\right)  }\left(  0\right)
+\int\nolimits_{0}^{t}e^{\varepsilon d\left(  t-s\right)  \boldsymbol{L}%
_{\bullet}}g\left(  u^{\left(  \bullet\right)  }\left(  s\right)  ,v^{\left(
\bullet\right)  }\left(  s\right)  \right)  ds,
\end{array}
\right.  \label{Mild_sol_2}%
\end{equation}
for $t\in\left[  0,\tau_{0}\right)  $.

The following result shows that Hypothesis 1, which also implies Condition
AS3, implies that any mild solution is a classical solution.

\begin{lemma}
\label{Lemma_mild_sol}$\mathcal{S}_{\text{Mild}}\left(  \tau_{0},X_{\bullet
}\oplus X_{\bullet}\right)  \subset C^{1}\left(  \left[  0,\tau_{0}\right)
,U\cap X_{\bullet}\oplus U\cap X_{\bullet}\right)  $.
\end{lemma}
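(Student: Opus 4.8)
The plan is to use the fact that $\boldsymbol{L}_{\bullet}$ is a \emph{bounded} operator (with $\left\Vert \boldsymbol{L}_{\bullet}\right\Vert \leq 2\gamma_{\mathcal{G}}$), so that the semigroup $T_{\bullet}(t):=e^{\varepsilon t\boldsymbol{L}_{\bullet}}\oplus e^{\varepsilon dt\boldsymbol{L}_{\bullet}}=e^{tA_{\bullet}}$, with $A_{\bullet}:=\varepsilon\boldsymbol{L}_{\bullet}\oplus\varepsilon d\boldsymbol{L}_{\bullet}$, is not merely strongly continuous but \emph{norm-differentiable}, satisfying $\frac{d}{dt}T_{\bullet}(t)=A_{\bullet}T_{\bullet}(t)=T_{\bullet}(t)A_{\bullet}$. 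Given a mild solution $w:=u^{\left(\bullet\right)}\oplus v^{\left(\bullet\right)}\in C\left(\left[0,\tau_{0}\right),U\cap X_{\bullet}\oplus U\cap X_{\bullet}\right)$, the characterization (\ref{Mild_sol_1})--(\ref{Mild_sol_2}) shows that it satisfies
\[
w(t)=T_{\bullet}(t)w(0)+\int_{0}^{t}T_{\bullet}(t-s)F(w(s))\,ds,
\]
where $F$ denotes the nonlinearity $\left[f(u,v),g(u,v)\right]$ of (\ref{Definition_map_f_g}). I would differentiate this identity in $t$ and read off directly that $w\in C^{1}$.

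First I would note that, since $w$ is continuous with values in $U\oplus U$ and $F$ is continuous there (Condition AS3 even gives local Lipschitz continuity), the map $s\mapsto\phi(s):=F(w(s))$ is continuous on $\left[0,\tau_{0}\right)$. The summand $T_{\bullet}(t)w(0)$ is then manifestly $C^{1}$, with derivative $A_{\bullet}T_{\bullet}(t)w(0)$. The heart of the argument is to differentiate the convolution term $\Phi(t):=\int_{0}^{t}T_{\bullet}(t-s)\phi(s)\,ds$. Forming the difference quotient $h^{-1}\left(\Phi(t+h)-\Phi(t)\right)$ and splitting the integral over $\left[0,t\right]$ and $\left[t,t+h\right]$, I would let $h\to0$: on $\left[0,t\right]$ the quotient $h^{-1}\left(T_{\bullet}(t+h-s)-T_{\bullet}(t-s)\right)$ equals $T_{\bullet}(t-s)\,h^{-1}\left(T_{\bullet}(h)-I\right)$ and converges to $A_{\bullet}T_{\bullet}(t-s)$ \emph{uniformly} in $s$, because $\left\Vert T_{\bullet}(t-s)\right\Vert$ is bounded on $\left[0,t\right]$ and $h^{-1}\left(T_{\bullet}(h)-I\right)\to A_{\bullet}$ in operator norm; this contributes $A_{\bullet}\Phi(t)$. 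The boundary piece $h^{-1}\int_{t}^{t+h}T_{\bullet}(t+h-s)\phi(s)\,ds$ converges to $T_{\bullet}(0)\phi(t)=\phi(t)$ by continuity of $T_{\bullet}$ and $\phi$. Hence $\Phi'(t)=A_{\bullet}\Phi(t)+\phi(t)$.

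Combining the two summands gives $w'(t)=A_{\bullet}\left(T_{\bullet}(t)w(0)+\Phi(t)\right)+F(w(t))=A_{\bullet}w(t)+F(w(t))$ (with one-sided derivative at $t=0$). Since $w$ is continuous, $A_{\bullet}$ is bounded, and $F$ is continuous on $U\oplus U$, the right-hand side is continuous in $t$; therefore $w'$ exists and is continuous, i.e.\ $w\in C^{1}\left(\left[0,\tau_{0}\right),U\cap X_{\bullet}\oplus U\cap X_{\bullet}\right)$, which is the assertion. The only point demanding care---and the sole place where anything could go wrong---is the uniform convergence of the semigroup difference quotients needed to differentiate under the integral sign; this step would fail for an unbounded generator but is immediate here precisely because $A_{\bullet}$ is bounded, so that $t\mapsto T_{\bullet}(t)$ is norm-differentiable (indeed real-analytic). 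Everything else is a routine use of the fundamental theorem of calculus together with the continuity of $w$ and $F$.
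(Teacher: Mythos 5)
Your proposal is correct and follows essentially the same route as the paper: both exploit the boundedness of $\boldsymbol{L}_{\bullet}$ to get norm-differentiability of the semigroup, split the difference quotient of the convolution term over $\left[0,t\right]$ and $\left[t,t+h\right]$, use the operator-norm convergence of $h^{-1}\left(e^{\varepsilon h\boldsymbol{L}_{\bullet}}-\boldsymbol{I}\right)$ to $\varepsilon\boldsymbol{L}_{\bullet}$ for the first piece, and the continuity of $s\mapsto f\left(u^{\left(\bullet\right)}(s),v^{\left(\bullet\right)}(s)\right)$ (the paper invokes the Lipschitz bound of Condition AS3, you use plain continuity --- either suffices) for the boundary piece. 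No gaps.
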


\begin{proof}
Set
\[%
\begin{array}
[c]{ccc}%
r^{\left(  \bullet\right)  }\left(  t\right)  & := & \int\nolimits_{0}%
^{t}e^{\varepsilon\left(  t-s\right)  \boldsymbol{L}_{\bullet}}f\left(
u^{\left(  \bullet\right)  }\left(  s\right)  ,v^{\left(  \bullet\right)
}\left(  s\right)  \right)  ds\\
&  & \\
s^{\left(  \bullet\right)  }\left(  t\right)  & := & \int\nolimits_{0}%
^{t}e^{\varepsilon d\left(  t-s\right)  \boldsymbol{L}_{\bullet}}g\left(
u^{\left(  \bullet\right)  }\left(  s\right)  ,v^{\left(  \bullet\right)
}\left(  s\right)  \right)  ds,
\end{array}
\]
for $t\in\left[  0,\tau_{0}\right)  $. Then by (\ref{Mild_sol_1}%
)-(\ref{Mild_sol_2}), it is sufficient to show that
\[
r^{\left(  \bullet\right)  }\left(  t\right)  \text{, }s^{\left(
\bullet\right)  }\left(  t\right)  \in C^{1}\left(  \left[  0,\tau_{0}\right)
,U\cap X_{\bullet}\oplus U\cap X_{\bullet}\right)  \text{.}%
\]
We show that
\[
\frac{d}{dt}r^{\left(  \bullet\right)  }\left(  t\right)  =\varepsilon
\int\nolimits_{0}^{t}e^{\varepsilon\left(  t-s\right)  \boldsymbol{L}%
_{\bullet}}\boldsymbol{L}_{\bullet}f\left(  u^{\left(  \bullet\right)
}\left(  s\right)  ,v^{\left(  \bullet\right)  }\left(  s\right)  \right)
ds+f\left(  u^{\left(  \bullet\right)  }\left(  t\right)  ,v^{\left(
\bullet\right)  }\left(  t\right)  \right)  ,
\]
for $t\in\left[  0,\tau_{0}\right)  $. A similar formula holds for the
derivative of $s^{\left(  \bullet\right)  }\left(  t\right)  $. Take
$t\in\left[  0,\tau_{0}\right)  $ and $h\in\left[  0,t-s\right]  $, then
\begin{align*}
\frac{r^{\left(  \bullet\right)  }\left(  t+h\right)  -r^{\left(
\bullet\right)  }\left(  t\right)  }{h}  &  =\int\nolimits_{0}^{t}%
e^{\varepsilon\left(  t-s\right)  \boldsymbol{L}_{\bullet}}\left\{
\frac{e^{\varepsilon h\boldsymbol{L}_{\bullet}}-\boldsymbol{I}}{h}\right\}
f\left(  u^{\left(  \bullet\right)  }\left(  s\right)  ,v^{\left(
\bullet\right)  }\left(  s\right)  \right)  ds\\
&  +\frac{1}{h}\int\nolimits_{t}^{t+h}e^{\varepsilon\left(  t+h-s\right)
\boldsymbol{L}_{\bullet}}f\left(  u^{\left(  \bullet\right)  }\left(
s\right)  ,v^{\left(  \bullet\right)  }\left(  s\right)  \right)  ds.
\end{align*}
Then
\begin{gather*}
\left\Vert \frac{r^{\left(  \bullet\right)  }\left(  t+h\right)  -r^{\left(
\bullet\right)  }\left(  t\right)  }{h}-\frac{d}{dt}r^{\left(  \bullet\right)
}\left(  t\right)  \right\Vert _{\infty}\leq\\
\left\Vert \int\nolimits_{0}^{t}e^{\varepsilon\left(  t-s\right)
\boldsymbol{L}_{\bullet}}\left\{  \frac{e^{\varepsilon h\boldsymbol{L}%
_{\bullet}}-\boldsymbol{I}}{h}-\varepsilon\boldsymbol{L}_{\bullet}\right\}
f\left(  u^{\left(  \bullet\right)  }\left(  s\right)  ,v^{\left(
\bullet\right)  }\left(  s\right)  \right)  ds\right\Vert _{\infty}+\\
\left\Vert \frac{1}{h}\int\nolimits_{t}^{t+h}e^{\varepsilon\left(
t-s+h\right)  \boldsymbol{L}_{\bullet}}\left\{  f\left(  u^{\left(
\bullet\right)  }\left(  s\right)  ,v^{\left(  \bullet\right)  }\left(
s\right)  \right)  -f\left(  u^{\left(  \bullet\right)  }\left(  t\right)
,v^{\left(  \bullet\right)  }\left(  t\right)  \right)  \right\}
ds\right\Vert _{\infty}\\
=:I_{1}+I_{2}.
\end{gather*}
Now by using that $u^{\left(  \bullet\right)  }\oplus$ $v^{\left(
\bullet\right)  }\in C\left(  \left[  0,\tau_{0}\right)  ,U\cap X_{\bullet
}\oplus U\cap X_{\bullet}\right)  $ and Hypothesis 1:%
\[
I_{1}\leq\left\Vert \frac{e^{\varepsilon h\boldsymbol{L}_{\bullet}%
}-\boldsymbol{I}}{h}-\varepsilon\boldsymbol{L}_{\bullet}\right\Vert \left\Vert
f\left(  u^{\left(  \bullet\right)  }\left(  s\right)  ,v^{\left(
\bullet\right)  }\left(  s\right)  \right)  \right\Vert _{\infty}\leq
B\left\Vert \frac{e^{\varepsilon h\boldsymbol{L}_{\bullet}}-\boldsymbol{I}}%
{h}-\varepsilon\boldsymbol{L}_{\bullet}\right\Vert \rightarrow0\text{ }%
\]
as $h\rightarrow0$, where $B=\sup_{u,v\in U\cap X_{\bullet}}\left\vert
f\left(  u,v\right)  \right\vert <\infty$. And by using that
\[
u^{\left(  \bullet\right)  }\oplus v^{\left(  \bullet\right)  }\in C\left(
\left[  0,\tau_{0}\right)  ,U\cap X_{\bullet}\oplus U\cap X_{\bullet}\right)
\]
and Conditions AS2-AS3, we have%
\begin{gather*}
I_{2}\leq\frac{1}{h}\int\nolimits_{t}^{t+h}\left\Vert f\left(  u^{\left(
\bullet\right)  }\left(  s\right)  ,v^{\left(  \bullet\right)  }\left(
s\right)  \right)  -f\left(  u^{\left(  \bullet\right)  }\left(  t\right)
,v^{\left(  \bullet\right)  }\left(  t\right)  \right)  \right\Vert _{\infty
}\text{ }ds\\
\leq L\frac{1}{h}\int\nolimits_{t}^{t+h}\max\left\{  \left\Vert u^{\left(
\bullet\right)  }\left(  s\right)  -u^{\left(  \bullet\right)  }\left(
t\right)  \right\Vert _{\infty},\left\Vert v^{\left(  \bullet\right)  }\left(
s\right)  -v^{\left(  \bullet\right)  }\left(  t\right)  \right\Vert _{\infty
}\right\}  ds\\
\leq L\sup_{s\in\left[  t,t+h\right]  }\left\Vert u^{\left(  \bullet\right)
}\left(  s\right)  -u^{\left(  \bullet\right)  }\left(  t\right)  \right\Vert
_{\infty}+L\sup_{s\in\left[  t,t+h\right]  }\left\Vert v^{\left(
\bullet\right)  }\left(  s\right)  -v^{\left(  \bullet\right)  }\left(
t\right)  \right\Vert _{\infty}\rightarrow0\text{ as }h\rightarrow0.
\end{gather*}

\end{proof}

\subsection{Existence for the initial value problems}

\begin{theorem}
\label{Theorem2} For each $u_{0}^{\left(  \bullet\right)  }\oplus
v_{0}^{\left(  \bullet\right)  }\in U\cap X_{N}\oplus U\cap X_{N}$, there
exist $\tau_{u_{0}^{\left(  \bullet\right)  }\oplus v_{0}^{\left(
\bullet\right)  }}\in\left(  0,\tau\right)  $ and $u^{\left(  \bullet\right)
}\oplus v^{\left(  \bullet\right)  }\in\mathcal{S}_{\text{Mild}}\left(
\tau_{u_{0}^{\left(  \bullet\right)  }\oplus v_{0}^{\left(  \bullet\right)  }%
},X_{\bullet}\oplus X_{\bullet}\right)  $ such that $u^{\left(  \bullet
\right)  }\left(  0\right)  \oplus v^{\left(  \bullet\right)  }\left(
0\right)  =u_{0}^{\left(  \bullet\right)  }\oplus v_{0}^{\left(
\bullet\right)  }$. Furthermore,
\[
\lim_{k\rightarrow\infty}\text{ }\sup_{0\leq t\leq\tau_{u_{0}^{\left(
\bullet\right)  }\oplus v_{0}^{\left(  \bullet\right)  }}}\left\Vert
u_{k}^{\left(  \bullet\right)  }\left(  t\right)  \oplus v_{k}^{\left(
\bullet\right)  }\left(  t\right)  -u^{\left(  \bullet\right)  }\left(
t\right)  \oplus v^{\left(  \bullet\right)  }\left(  t\right)  \right\Vert
=0\text{,}%
\]
where $u_{k}^{\left(  \bullet\right)  }\oplus v_{k}^{\left(  \bullet\right)
}\in C(\left[  0,\tau_{u_{0}^{\left(  \bullet\right)  }\oplus v_{0}^{\left(
\bullet\right)  }}\right]  ,U\cap X_{\bullet}\oplus U\cap X_{\bullet})$ are
defined by $u_{1}^{\left(  \bullet\right)  }\left(  t\right)  \oplus
v_{1}^{\left(  \bullet\right)  }\left(  t\right)  =u_{0}^{\left(
\bullet\right)  }\oplus v_{0}^{\left(  \bullet\right)  }$ and%
\[
\left\{
\begin{array}
[c]{ccc}%
u_{k+1}^{\left(  \bullet\right)  }\left(  t\right)  & = & e^{\varepsilon
t\boldsymbol{L}_{\bullet}}u_{0}^{\left(  \bullet\right)  }+\int\nolimits_{0}%
^{t}e^{\varepsilon\left(  t-s\right)  \boldsymbol{L}_{\bullet}}f\left(
u_{k}^{\left(  \bullet\right)  }\left(  s\right)  ,v_{k}^{\left(
\bullet\right)  }\left(  s\right)  \right)  ds\\
&  & \\
v_{k+1}^{\left(  \bullet\right)  }\left(  t\right)  & = & e^{\varepsilon
dt\boldsymbol{L}_{\bullet}}v_{0}^{\left(  \bullet\right)  }+\int
\nolimits_{0}^{t}e^{\varepsilon d\left(  t-s\right)  \boldsymbol{L}_{\bullet}%
}g\left(  u_{k}^{\left(  \bullet\right)  }\left(  s\right)  ,v_{k}^{\left(
\bullet\right)  }\left(  s\right)  \right)  ds,
\end{array}
\right.
\]
for $t\in\left[  0,\tau_{u_{0}^{\left(  \bullet\right)  }\oplus v_{0}^{\left(
\bullet\right)  }}\right]  $ and $k\in\mathbb{N}\setminus\left\{  0\right\}  $.
\end{theorem}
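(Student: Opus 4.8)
The plan is to recast the mild-solution formulation (\ref{Mild_sol_1})--(\ref{Mild_sol_2}) as a fixed-point problem and solve it by Picard iteration on a short time interval, following the abstract scheme of \cite[Chapter 5]{Milan}. Fix $u_{0}^{(\bullet)}\oplus v_{0}^{(\bullet)}\in U\cap X_{N}\oplus U\cap X_{N}$ and define the map
\[
\boldsymbol{\Phi}(w)(t)=\left[
\begin{array}{l}
e^{\varepsilon t\boldsymbol{L}_{\bullet}}u_{0}^{(\bullet)}+\int_{0}^{t}e^{\varepsilon(t-s)\boldsymbol{L}_{\bullet}}f(w(s))\,ds\\
\\
e^{\varepsilon dt\boldsymbol{L}_{\bullet}}v_{0}^{(\bullet)}+\int_{0}^{t}e^{\varepsilon d(t-s)\boldsymbol{L}_{\bullet}}g(w(s))\,ds
\end{array}
\right],
\]
so that the iterates in the statement are exactly $w_{1}=u_{0}^{(\bullet)}\oplus v_{0}^{(\bullet)}$ and $w_{k+1}=\boldsymbol{\Phi}(w_{k})$. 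By the equivalence (\ref{Mild_sol_1})--(\ref{Mild_sol_2}), a fixed point of $\boldsymbol{\Phi}$ lying in $C([0,\tau_{0}],U\cap X_{\bullet}\oplus U\cap X_{\bullet})$ is precisely an element of $\mathcal{S}_{\text{Mild}}(\tau_{0},X_{\bullet}\oplus X_{\bullet})$ with the prescribed initial datum.

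First I would fix the geometry. Since $U$ is open (see the discussion following (\ref{Definition_set_U})), choose $\delta>0$ with $\overline{B(u_{0}^{(\bullet)}\oplus v_{0}^{(\bullet)},\delta)}\subset U\oplus U$; on this ball Condition AS3 provides a Lipschitz constant $L$ for $[f,g]$ via (\ref{Lipschitz_condition}), and $B:=\sup\{\|f\|_{\infty},\|g\|_{\infty}\}$ is finite there. I then pick $\tau_{0}\in(0,\tau)$ small enough that
\[
\sup_{0\le t\le\tau_{0}}\big\|e^{\varepsilon t\boldsymbol{L}_{\bullet}}u_{0}^{(\bullet)}-u_{0}^{(\bullet)}\big\|_{\infty},\ \ \sup_{0\le t\le\tau_{0}}\big\|e^{\varepsilon dt\boldsymbol{L}_{\bullet}}v_{0}^{(\bullet)}-v_{0}^{(\bullet)}\big\|_{\infty}<\tfrac{\delta}{2}
\]
(possible by the strong continuity of the semigroup from Lemma \ref{Lemma1} and Remark \ref{Nota_Lemma1}), together with $\tau_{0}B<\delta/2$ and $L\tau_{0}<1$.

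The core estimates rest on Condition AS2. Because $\|e^{\varepsilon t\boldsymbol{L}_{\bullet}}\oplus e^{\varepsilon dt\boldsymbol{L}_{\bullet}}\|\le 1$, for any $w$ in the closed ball of radius $\delta$ about the constant function $u_{0}^{(\bullet)}\oplus v_{0}^{(\bullet)}$ one obtains $\|\boldsymbol{\Phi}(w)(t)-u_{0}^{(\bullet)}\oplus v_{0}^{(\bullet)}\|\le \delta/2+\tau_{0}B<\delta$, so $\boldsymbol{\Phi}$ maps this ball into itself; in particular every iterate stays inside $U\cap X_{\bullet}\oplus U\cap X_{\bullet}$ (here one also uses that $\boldsymbol{L}_{\bullet}$, the semigroup, and the pointwise action of $f,g$ all preserve $X_{\bullet}$, by $\boldsymbol{L}_{M}=\boldsymbol{L}\mid_{X_{M}}$ and (\ref{Eq_nota})). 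Likewise, the contraction property of the semigroup combined with (\ref{Lipschitz_condition}) gives
\[
\sup_{0\le t\le\tau_{0}}\|\boldsymbol{\Phi}(w_{1})(t)-\boldsymbol{\Phi}(w_{2})(t)\|\le L\tau_{0}\sup_{0\le t\le\tau_{0}}\|w_{1}(t)-w_{2}(t)\|,
\]
and $L\tau_{0}<1$ makes $\boldsymbol{\Phi}$ a contraction on the complete metric space $C([0,\tau_{0}],\overline{B})$. The Banach fixed-point theorem then yields a unique fixed point to which the iterates $w_{k}$ converge in the supremum-in-$t$ norm, which is exactly the asserted convergence; passing to the limit in (\ref{Mild_sol_2}) (using continuity of $f,g$ and of convolution against the semigroup) shows the limit is a mild solution, and Lemma \ref{Lemma_mild_sol} promotes it to a classical solution.

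I expect the only genuine obstacle to be the bookkeeping that confines every iterate to the open set $U$ over one common time $\tau_{0}$: one must simultaneously absorb the semigroup drift $\|e^{\varepsilon t\boldsymbol{L}_{\bullet}}u_{0}^{(\bullet)}-u_{0}^{(\bullet)}\|_{\infty}$ and the Duhamel integral into the radius $\delta$, and verify that $\delta,L,B$ may be taken independent of $\bullet$. The independence is immediate because $X_{M}\hookrightarrow X_{\infty}$ carries the same supremum norm and the same set $U$, so the Lipschitz data obtained in $X_{\infty}$ restrict to every $X_{M}$ (as noted in the remark after (\ref{Lipschitz_condition})); this uniformity is what ultimately permits all the Cauchy problems to be handled at once.
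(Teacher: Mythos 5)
Your argument is correct and is essentially the paper's: the paper proves Theorem \ref{Theorem2} by verifying Conditions AS1--AS3 and then invoking Theorems 5.1.2 and 5.2.2 of Miklav\v{c}i\v{c}, whose proof is exactly the Picard--Banach contraction scheme you write out (semigroup contractivity from AS2, the local Lipschitz bound from AS3, a short time $\tau_{0}$ chosen so the iterates stay in a closed ball inside $U\oplus U$). You have simply unpacked the cited abstract theorem, and your added remark that the Lipschitz data restrict uniformly to every $X_{M}$ matches the paper's treatment of all the spaces $X_{\bullet}$ at once.
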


\begin{proof}
The results follow from Theorems 5.2.2 and 5.1.2 in \cite{Milan}, by using
conditions AS1, AS2, AS3.
\end{proof}

\subsubsection{Some additional remarks}

The uniqueness of all the initial value problems (\ref{Eq_18}) follows \ from
standard results, see e.g. \cite[Theorem 5.2.3]{Milan}. Also, the continuous
dependence of the initial value for all the initial value problems
(\ref{Eq_18}) also holds, see e.g. \cite[Theorem 5.2.4]{Milan}.

Maximal interval existence: for $u_{0}^{\bullet}\oplus v_{0}^{\bullet}\in
U\cap X_{\bullet}\oplus U\cap X_{\bullet}$, there exists $\tau_{\max}$
depending on the initial datum, and $u^{\bullet}\oplus v^{\bullet}%
\in\mathcal{S}_{\text{Mild}}\left(  \tau_{\max},X_{\bullet}\oplus X_{\bullet
}\right)  $ such that $u^{\bullet}\left(  0\right)  \oplus v^{\bullet}\left(
0\right)  =u_{0}^{\bullet}\oplus v_{0}^{\bullet}$, and if $\tau_{\max}<\infty$
then%
\[
\int\nolimits_{0}^{\tau_{\max}}\left\Vert f\left(  u^{\bullet}\left(
t\right)  ,v^{\bullet}\left(  t\right)  \right)  \right\Vert _{\infty
}dt=\infty\text{ \ or \ }\int\nolimits_{0}^{\tau_{\max}}\left\Vert g\left(
u^{\bullet}\left(  t\right)  ,v^{\bullet}\left(  t\right)  \right)
\right\Vert _{\infty}dt=\infty,
\]
or there exists $\widetilde{u}^{\bullet}\oplus\widetilde{v}^{\bullet}%
\in\overline{U}\smallsetminus U$ such that $\lim_{t\rightarrow\tau_{\max}%
}u^{\bullet}\left(  t\right)  \oplus v^{\bullet}\left(  t\right)
=\widetilde{u}^{\bullet}\oplus\widetilde{v}^{\bullet}$. By taking the same
initial datum for all the initial value problems (\ref{Eq_18}), we have the
same maximal interval existence for all of them.

\subsection{\label{Section_examples}Some examples of reaction-diffusion
networks on $X_{\bullet}$}

When studying Turing instability, one is interested in the study of a system
of type (\ref{Eq_18}) around a state $u_{0}\oplus v_{0}$ satisfying
\begin{equation}
f(u_{0},v_{0})=g(u_{0},v_{0})=0\text{ \ with }u_{0}\text{, }v_{0}\text{
non-negative real numbers.} \label{Eq_system}%
\end{equation}
In order to achieve this goal, we pick an open neighborhood $\left(
a,b\right)  \times\left(  a,b\right)  \subset\mathbb{R}^{2}$ containing
$(u_{0},v_{0})$, but no other solution of (\ref{Eq_system}). Then, by Theorem
\ref{Theorem2}, if the initial datum is closed to $(u_{0},v_{0})$, then all
the initial value problems (\ref{Eq_18}) have mild solutions, and the same
maximal interval of existence.

\subsubsection{\label{Sec_Brusselator}The Brusselator}

Taking $A>0$ and $B>0$, the Brusselator on ${\large X}_{\bullet}$ is the
following reaction-diffusion system:%

\begin{equation}
\left\{
\begin{array}
[c]{l}%
u\left(  t\right)  ,v\left(  t\right)  \in{\large C}^{1}(\left[
0,\tau\right)  ,{\large X}_{\bullet});\\
\\
\frac{\partial u^{\left(  \bullet\right)  }\left(  x,t\right)  }{\partial
t}-\varepsilon\boldsymbol{L}_{\bullet}u\left(  x,t\right)  =A-\left(
B+1\right)  u+u^{2}v\\
\\
\frac{\partial v^{\left(  \bullet\right)  }\left(  x,t\right)  }{\partial
t}-\varepsilon d\boldsymbol{L}_{\bullet}v\left(  x,t\right)  =Bu-u^{2}v\text{,
}%
\end{array}
\right.  \label{Brusselator}%
\end{equation}
for $t\in\left[  0,\tau\right)  $, $x\in\mathcal{K}_{N}$. This system has only
a homogeneous steady state: $u=A$, $v=\frac{B}{A}$. We consider
$f(u,v)=A-\left(  B+1\right)  u+u^{2}v$, $g(u,v)=Bu-u^{2}v$ as functions
defined on
\[
\left(  -\delta+A,\delta+A\right)  \times\left(  -\delta+\frac{B}{A}%
,\delta+\frac{B}{A}\right)  \subset\left(  a,b\right)  \times\left(
a,b\right)  ,
\]
for $\delta>0$ sufficiently small. Since $\nabla f(u,v)\neq\left(  0,0\right)
$ and $\nabla g(u,v)\neq\left(  0,0\right)  $ for any $(u,v)\in\mathbb{R}%
\times\mathbb{R}$, there exist $a$, $b\in\mathbb{R}$ such that $\nabla
f\mid_{\left(  a,b\right)  \times\left(  a,b\right)  }\neq$ $\left(
0,0\right)  $ and $\nabla g\mid_{\left(  a,b\right)  \times\left(  a,b\right)
}\neq$ $\left(  0,0\right)  $, and consequently Hypothesis 1 is valid. Now, we
take the subset
\[
\mathcal{U}:=\left\{  r\in X_{\infty};\left\Vert r-A\right\Vert _{\infty
}<\delta\right\}  \oplus\left\{  s\in X_{\infty};\left\Vert h-\frac{B}%
{A}\right\Vert _{\infty}<\delta\right\}  \subset U\oplus U.
\]
Then for any initial datum in $\mathcal{U\cap}X_{\bullet}\oplus X_{\bullet}$,
system (\ref{Brusselator}) has a unique solution, cf. Theorem \ref{Theorem2}
and Lemma \ref{Lemma_mild_sol}.

In the case $X_{\infty}$, the system (\ref{Brusselator}) is a $p$-adic version
of the Brusselator. This system was first considered by Prigogine and Lefever
\cite{Prig-Lefe} and Nicolis and Prigogine \cite{Nicolis-Prig}, see also
\cite[Section 7.5.4]{Perthame}.

\subsubsection{The CIMA reaction}

The CIMA\ reaction (chlorite-iodide-malonic acid) provided experimental
evidence of Turing instability. It was modeled by Lengyel and Epstein
\cite{Lengyel et al}. The following is a version of this system in
$X_{\bullet}$:%
\begin{equation}
\left\{
\begin{array}
[c]{l}%
u\left(  t\right)  ,v\left(  t\right)  \in{\large C}^{1}(\left[
0,\tau\right)  ,{\large X}_{\bullet});\\
\\
\frac{\partial u^{\left(  \bullet\right)  }\left(  x,t\right)  }{\partial
t}-\varepsilon\boldsymbol{L}_{\bullet}u\left(  x,t\right)  =A-u-\frac
{4uv}{1+u^{2}}\\
\\
\frac{\partial v^{\left(  \bullet\right)  }\left(  x,t\right)  }{\partial
t}-\varepsilon d\boldsymbol{L}_{\bullet}v\left(  x,t\right)  =BCu-\frac
{Cuv}{1+u^{2}}\text{, }%
\end{array}
\right.  \label{CIMA}%
\end{equation}
for $t\in\left[  0,\tau\right)  $, $x\in\mathcal{K}_{N}$. Here $u$ (the
activator) denotes the iodide ($I^{-}$ ) concentration and $v$ (the inhibitor)
the chlorite ($ClO_{2}^{-}$ ) concentration. We follow the presentation in
Perthame's book \cite[Section 7.5.2]{Perthame}. Then, we consider this system
with $A>0$, $B>0$, $C>0$. There is a single homogeneous steady state%
\[
u_{0}=\frac{A}{4B+1}\text{, \ }v_{0}=B(1+\frac{A^{2}}{\left(  4B+1\right)
^{2}}).
\]
We consider $f(u,v)=A-u-\frac{4uv}{1+u^{2}}$, $g(u,v)=BCu-\frac{Cuv}{1+u^{2}}$
as functions defined in
\[
\left(  -\delta+u_{0},\delta+u_{0}\right)  \times\left(  -\delta+v_{0}%
,\delta+v_{0}\right)  \subset\left(  a,b\right)  \times\left(  a,b\right)  ,
\]
for $\delta>0$ sufficiently small so that $\left(  0,0\right)  \notin\left(
a,b\right)  \times\left(  a,b\right)  $. Notice that%
\[
\nabla f(u,v)=\left(  0,0\right)  \Leftrightarrow u=0,v=\frac{1}{4}\text{ and
}\nabla g(u,v)=\left(  0,0\right)  \text{ }\Leftrightarrow u=0,v=B.
\]
Then, there exist $a,b\in\mathbb{R}$ such that $\nabla f\mid_{\left(
a,b\right)  \times\left(  a,b\right)  }\neq$ $\left(  0,0\right)  $ and
$\nabla g\mid_{\left(  a,b\right)  \times\left(  a,b\right)  }\neq$ $\left(
0,0\right)  $, and consequently Hypothesis 1 holds. Now, we take the subset
\[
\mathcal{U}:=\left\{  r\in X_{\infty};\left\Vert r-u_{0}\right\Vert _{\infty
}<\delta\right\}  \oplus\left\{  s\in X_{\infty};\left\Vert h-v_{0}\right\Vert
_{\infty}<\delta\right\}  \subset U\oplus U.
\]
Then for any initial datum in $\mathcal{U\cap}X_{\bullet}\oplus X_{\bullet}$,
the system (\ref{CIMA}) has a unique solution, cf. Theorem \ref{Theorem2} and
Lemma \ref{Lemma_mild_sol}. In the case $X_{\infty}$, system (\ref{CIMA}) is a
$p$-adic version of the CIMA reaction-diffusion system.

\section{Approximations}

In this section we show that discretization (\ref{Eq_15}) provides a good
approximation of (\ref{Eq_14}). We use standard techniques for approximating
nonlinear evolution equations, see e.g. \cite{Milan}. The following conditions
will be needed in order to use the results in \cite{Milan}:

\subsection{Condition A}

\noindent(a) $X_{\infty}$, $X_{N}$, $X_{N+1}$, \ldots\ are real Banach spaces,
with the norm $\left\Vert u\oplus v\right\Vert =\max\left\{  \left\Vert
u\right\Vert _{\infty},\left\Vert v\right\Vert _{\infty}\right\}  $.

\noindent(b) $\boldsymbol{P}_{M}\oplus\boldsymbol{P}_{M}\in\mathfrak{B}%
(X_{\infty}\oplus X_{\infty},X_{M}\oplus X_{M})$, for $M\geq N$, where
$\boldsymbol{P}_{M}$ as in (\ref{definition_P_M}). In addition,
\[
\left\Vert \left(  \boldsymbol{P}_{M}\oplus\boldsymbol{P}_{M}\right)  \left(
u\oplus v\right)  \right\Vert \leq\left\Vert u\oplus v\right\Vert \text{ for
}M\geq N.
\]
\noindent(c) $\boldsymbol{E}_{M}\oplus\boldsymbol{E}_{M}\in\mathfrak{B}%
(X_{\infty}\oplus X_{\infty},X_{M}\oplus X_{M})$, for $M\geq N$, where
$\boldsymbol{E}_{M}$ is the canonical continuous embedding $X_{M}%
\hookrightarrow X_{\infty}$.

\noindent(d) $\left(  \boldsymbol{P}_{M}\oplus\boldsymbol{P}_{M}\right)
\left(  \boldsymbol{E}_{M}\oplus\boldsymbol{E}_{M}\right)  \left(  u\oplus
v\right)  =u\oplus v$ for $M\geq N$ and $u\oplus v\in X_{M}\oplus X_{M}$.

\subsection{Condition B}

$\varepsilon\boldsymbol{L}_{M}\oplus\varepsilon d\boldsymbol{L}_{M}%
\in\mathfrak{B}(X_{M}\oplus X_{M})$ for $M\geq N$, and
\[
\left\Vert e^{\varepsilon t\boldsymbol{L}_{M}}\oplus e^{\varepsilon
dt\boldsymbol{L}_{M}}\right\Vert \leq1\text{ for }t\geq0\text{, }M\geq
N\text{.}%
\]
This condition follows from Lemma \ref{Lemma1} and Remark \ref{Nota_Lemma1}.
Notice that Condition B, implies that $\left(  -\infty,0\right)  \subset
\rho\left(  \varepsilon\boldsymbol{L}_{M}\right)  \cap\rho\left(  \varepsilon
d\boldsymbol{L}_{M}\right)  $ for $M\geq N$, where $\rho$ denotes the
resolvent set, see e.g. \cite[Theorem 4.3.2]{Milan}.

\subsection{Condition C'}

$\varepsilon\boldsymbol{L}\oplus\varepsilon d\boldsymbol{L}$ is a densely
defined linear operator in $X_{\infty}\oplus X_{\infty}$, $\lambda_{0}%
\in\left(  -\infty,0\right)  \cap\rho\left(  \varepsilon\boldsymbol{L}\right)
\cap\rho\left(  \varepsilon d\boldsymbol{L}\right)  $ and
\begin{gather}
\lim_{M\rightarrow\infty}\left\Vert \left(  \varepsilon\boldsymbol{L}%
_{M}\oplus\varepsilon d\boldsymbol{L}_{M}\right)  \left(  \boldsymbol{P}%
_{M}\oplus\boldsymbol{P}_{M}\right)  \left(  u\oplus v\right)  -\right.
\nonumber\\
\left.  \left(  \boldsymbol{P}_{M}\oplus\boldsymbol{P}_{M}\right)  \left(
\varepsilon\boldsymbol{L}\oplus\varepsilon d\boldsymbol{L}\right)  \left(
u\oplus v\right)  \right\Vert =0, \label{Eq_20}%
\end{gather}
and
\begin{equation}
\lim_{M\rightarrow\infty}\left\Vert \left(  \boldsymbol{E}_{M}\oplus
\boldsymbol{E}_{M}\right)  \left(  \boldsymbol{P}_{M}\oplus\boldsymbol{P}%
_{M}\right)  \left(  u\oplus v\right)  -\left(  u\oplus v\right)  \right\Vert
=0, \label{Eq_21}%
\end{equation}
for all $u\oplus v\in Dom\left(  \varepsilon\boldsymbol{L}\oplus\varepsilon
d\boldsymbol{L}\right)  .$

Since $\varepsilon\boldsymbol{L}\oplus\varepsilon d\boldsymbol{L}$ is a
bounded linear operator $Dom\left(  \varepsilon\boldsymbol{L}\oplus\varepsilon
d\boldsymbol{L}\right)  =X_{\infty}\oplus X_{\infty}$ satisfying $\left\Vert
e^{\varepsilon t\boldsymbol{L}}\oplus e^{\varepsilon dt\boldsymbol{L}%
}\right\Vert \leq1$ for $t\geq0$, the existence of $\lambda_{0}$ is immediate.
Condition (\ref{Eq_21}) was already established in Lemma \ref{Lemma0}.
Condition (\ref{Eq_20}) is equivalent to%
\begin{equation}
\lim_{M\rightarrow\infty}\left\Vert \varepsilon\boldsymbol{L}_{M}%
\boldsymbol{P}_{M}u-\boldsymbol{P}_{M}\varepsilon\boldsymbol{L}u\right\Vert
_{\infty}=0\text{, and }\lim_{M\rightarrow\infty}\left\Vert d\varepsilon
\boldsymbol{L}_{M}\boldsymbol{P}_{M}u-\boldsymbol{P}_{M}\varepsilon
d\boldsymbol{L}u\right\Vert _{\infty}=0. \label{Eq_22}%
\end{equation}
Now, since $\boldsymbol{L}_{M}=\boldsymbol{L}\mid_{X_{M}}=\boldsymbol{L}%
\boldsymbol{P}_{M}$, and $\boldsymbol{P}_{M}=\boldsymbol{P}_{M}^{2}$, it is
sufficient to show:

\begin{lemma}
$\lim_{M\rightarrow\infty}\left\Vert \boldsymbol{L}_{M}\boldsymbol{P}%
_{M}u-\boldsymbol{P}_{M}\boldsymbol{L}u\right\Vert _{\infty}=0$ for $u\in
X_{\infty}$.
\end{lemma}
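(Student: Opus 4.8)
The plan is to reduce the statement to the strong convergence $\boldsymbol{P}_{M}\to\boldsymbol{I}$ already recorded in Lemma~\ref{Lemma0}(ii), exploiting that $\boldsymbol{L}$ is a \emph{bounded} operator on $X_{\infty}$ and that $\boldsymbol{L}_{M}$ is merely its restriction to the invariant subspace $X_{M}$. First I would observe that, since $\boldsymbol{P}_{M}u\in X_{M}$ and $\boldsymbol{L}_{M}=\boldsymbol{L}\mid_{X_{M}}$, one has $\boldsymbol{L}_{M}\boldsymbol{P}_{M}u=\boldsymbol{L}\boldsymbol{P}_{M}u$, provided $\boldsymbol{L}$ maps $X_{M}$ into itself. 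This invariance follows from a direct computation with the kernel: for any $\varphi\in{\large C}(\mathcal{K}_{N},\mathbb{R})$,
\[
\boldsymbol{L}\varphi\left(  x\right)  =\int\limits_{\mathcal{K}_{N}}\varphi\left(  y\right)  J_{N}(x,y)\,dy-g(x)\varphi\left(  x\right)  ,\qquad g(x)=\sum_{I\in G_{N}^{0}}\gamma_{I}\Omega\left(  p^{N}\left\vert x-I\right\vert _{p}\right)  ,
\]
where the first term lies in $X_{N}$ because its dependence on $x$ occurs only through the factors $\Omega(p^{N}\left\vert x-J\right\vert_{p})$. Since $g\in X_{N}$ is constant on each ball of radius $p^{-N}$, multiplication by $g$ does not change the refinement level of $\varphi$, so $g\varphi\in X_{M}$ whenever $\varphi\in X_{M}$; hence $\boldsymbol{L}(X_{M})\subseteq X_{M}$. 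Consequently the quantity to be estimated is exactly the commutator $\boldsymbol{L}\boldsymbol{P}_{M}u-\boldsymbol{P}_{M}\boldsymbol{L}u$.

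Next I would split this commutator by adding and subtracting $\boldsymbol{L}u$:
\[
\boldsymbol{L}\boldsymbol{P}_{M}u-\boldsymbol{P}_{M}\boldsymbol{L}u=\boldsymbol{L}\left(  \boldsymbol{P}_{M}u-u\right)  +\left(  \boldsymbol{L}u-\boldsymbol{P}_{M}\boldsymbol{L}u\right)  .
\]
For the first summand, the bound $\left\Vert \boldsymbol{L}\right\Vert \leq 2\gamma_{_{\mathcal{G}}}$ gives
\[
\left\Vert \boldsymbol{L}\left(  \boldsymbol{P}_{M}u-u\right)  \right\Vert_{\infty}\leq 2\gamma_{_{\mathcal{G}}}\left\Vert \boldsymbol{P}_{M}u-u\right\Vert _{\infty},
\]
which tends to $0$ as $M\rightarrow\infty$ by Lemma~\ref{Lemma0}(ii). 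For the second summand I would apply Lemma~\ref{Lemma0}(ii) directly to the fixed function $\boldsymbol{L}u\in X_{\infty}$, obtaining $\left\Vert \boldsymbol{L}u-\boldsymbol{P}_{M}\boldsymbol{L}u\right\Vert _{\infty}\rightarrow 0$. Adding the two estimates and using the triangle inequality yields the claim.

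There is essentially no serious obstacle here: the only point requiring a moment's care is the identity $\boldsymbol{L}_{M}\boldsymbol{P}_{M}u=\boldsymbol{L}\boldsymbol{P}_{M}u$, namely that $X_{M}$ is invariant under $\boldsymbol{L}$ so that ``restrict-then-apply'' and ``apply'' agree on the range of $\boldsymbol{P}_{M}$. Once this is in hand, the whole statement is a two-term telescoping estimate driven by the strong convergence $\boldsymbol{P}_{M}\rightarrow\boldsymbol{I}$ together with the uniform boundedness of $\boldsymbol{L}$. In particular no refinement-dependent (or domain-dependent) estimate on $\boldsymbol{L}$ is needed, in sharp contrast to the verification one would face in Condition C' for a genuinely unbounded generator.
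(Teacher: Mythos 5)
Your proof is correct, and it takes a genuinely different and softer route than the paper's. The paper decomposes $\boldsymbol{L}$ into the elementary pieces $\boldsymbol{L}_{JK}$ and computes $\boldsymbol{P}_{M}\left(\boldsymbol{L}_{JK}u\right)$ and $\boldsymbol{L}_{JK}\left(\boldsymbol{P}_{M}u\right)$ explicitly via the ball-refinement identity $\Omega\left(p^{N}\left\vert x-J\right\vert_{p}\right)=\sum_{J_{j}}\Omega\left(p^{M}\left\vert x-J_{j}\right\vert_{p}\right)$; in that computation the multiplication parts cancel exactly and only the averaging term survives, giving the pointwise bound $\left\Vert \boldsymbol{P}_{M}\boldsymbol{L}_{JK}u-\boldsymbol{L}_{JK}\boldsymbol{P}_{M}u\right\Vert_{\infty}\leq A_{JK}\left\vert \mathrm{Aver}(u-\boldsymbol{P}_{M}u;K+p^{N}\mathbb{Z}_{p})\right\vert$, after which Lemma~\ref{Lemma0}(ii) finishes the argument. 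You instead use only two structural facts --- $\boldsymbol{L}$ is bounded with $\left\Vert\boldsymbol{L}\right\Vert\leq 2\gamma_{\mathcal{G}}$, and $\boldsymbol{P}_{M}\rightarrow\boldsymbol{I}$ strongly --- and the telescoping $\boldsymbol{L}\boldsymbol{P}_{M}u-\boldsymbol{P}_{M}\boldsymbol{L}u=\boldsymbol{L}\left(\boldsymbol{P}_{M}u-u\right)+\left(\boldsymbol{I}-\boldsymbol{P}_{M}\right)\boldsymbol{L}u$, which is the standard argument showing that commutators of a bounded operator with strongly convergent, uniformly bounded projections vanish strongly. Your preliminary check that $X_{M}$ is $\boldsymbol{L}$-invariant (so that $\boldsymbol{L}_{M}\boldsymbol{P}_{M}u=\boldsymbol{L}\boldsymbol{P}_{M}u$) is the same identification the paper makes when it writes $\boldsymbol{L}_{M}=\boldsymbol{L}\mid_{X_{M}}=\boldsymbol{L}\boldsymbol{P}_{M}$, and your kernel argument for it is sound. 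What each approach buys: yours is shorter, cleaner, and applies verbatim to any bounded operator in place of $\boldsymbol{L}$; the paper's explicit computation exhibits the precise structure of the commutator (only the averaging of $u-\boldsymbol{P}_{M}u$ contributes), which yields a slightly sharper constant and could be turned into a rate of convergence in terms of the modulus of continuity of $u$. Your closing remark is also apt: no refinement-dependent estimate is needed here precisely because the generator is bounded.
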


\begin{proof}
By using the fact that all the operators involved are linear, it is sufficient
to establish the lemma for the following class of operators:
\[
\boldsymbol{L}_{JK}u(x):=p^{N}A_{JK}\Omega\left(  p^{N}\left\vert
x-J\right\vert _{p}\right)
{\textstyle\int\limits_{\mathcal{K}_{N}}}
\left\{  u(y)-u(x)\right\}  \Omega\left(  p^{N}\left\vert y-K\right\vert
_{p}\right)  dy,
\]
for $u\in X_{\infty}$. We define%
\[
\text{Aver}(u;K+p^{N}\mathbb{Z}_{p})=p^{N}%
{\textstyle\int\limits_{K+p^{N}\mathbb{Z}_{p}}}
u(y)dy.
\]
Then%
\begin{gather}
\boldsymbol{L}_{JK}u(x)=\left(  A_{JK}\text{Aver}(u;K+p^{N}\mathbb{Z}%
_{p})\right)  \Omega\left(  p^{N}\left\vert x-J\right\vert _{p}\right)
\label{Eq_24}\\
-A_{JK}u(x)\Omega\left(  p^{N}\left\vert x-J\right\vert _{p}\right)
.\nonumber
\end{gather}
We show that
\[
\lim_{M\rightarrow\infty}\left\Vert \boldsymbol{L}_{JK}\boldsymbol{P}%
_{M}u-\boldsymbol{P}_{M}\boldsymbol{L}_{JK}u\right\Vert _{\infty}=0\text{ for
}u\in X_{\infty}.
\]
The identity%
\[
J+p^{N}\mathbb{Z}_{p}=%
{\textstyle\bigsqcup\limits_{J_{j}\in G_{J}^{M}}}
J_{j}+p^{N}\mathbb{Z}_{p}\text{, for }M\geq N\text{,}%
\]
can be rewritten as%
\begin{equation}
\Omega\left(  p^{N}\left\vert x-J\right\vert _{p}\right)  =%
{\textstyle\sum\limits_{J_{j}\in G_{J}^{M}}}
\Omega\left(  p^{N}\left\vert x-J_{j}\right\vert _{p}\right)  \text{, for
}M\geq N. \label{Eq_25}%
\end{equation}
Now by using that%
\[
\boldsymbol{P}_{M}\left(  \boldsymbol{L}_{JK}u(x)\right)  =%
{\textstyle\sum\limits_{R_{j}\in G_{N}^{M}}}
\boldsymbol{L}_{JK}u(R_{j})\Omega\left(  p^{M}\left\vert x-R_{j}\right\vert
_{p}\right)
\]
and (\ref{Eq_24})-(\ref{Eq_25}), we obtain%
\begin{gather}
\boldsymbol{P}_{M}\left(  \boldsymbol{L}_{JK}u(x)\right)  =\left(
A_{JK}\text{Aver}(u;K+p^{N}\mathbb{Z}_{p})\right)
{\textstyle\sum\limits_{R_{j}\in G_{J}^{M}}}
\Omega\left(  p^{M}\left\vert x-R_{j}\right\vert _{p}\right) \label{Eq_26}\\
-A_{JK}%
{\textstyle\sum\limits_{R_{j}\in G_{J}^{M}}}
u\left(  R_{j}\right)  \Omega\left(  p^{M}\left\vert x-R_{j}\right\vert
_{p}\right)  ,\nonumber
\end{gather}
and by using again (\ref{Eq_25}) and (\ref{Eq_26}), and the definition of
$P_{M}u(x)$,%
\begin{gather}
\boldsymbol{P}_{M}\left(  \boldsymbol{L}_{JK}u(x)\right)  =\left(
A_{JK}\text{Aver}(u;K+p^{N}\mathbb{Z}_{p})\right)  \Omega\left(
p^{N}\left\vert x-J\right\vert _{p}\right) \label{Eq_27A}\\
-A_{JK}\boldsymbol{P}_{M}u(x)\Omega\left(  p^{N}\left\vert x-J\right\vert
_{p}\right)  .\nonumber
\end{gather}
To compute $\boldsymbol{L}_{JK}\left(  \boldsymbol{P}_{M}u(x)\right)  $, we
use first (\ref{Eq_24}) to get%
\begin{gather}
\boldsymbol{L}_{JK}\left(  \boldsymbol{P}_{M}u(x)\right)  =\left(
A_{JK}\text{Aver}(\boldsymbol{P}_{M}u;K+p^{N}\mathbb{Z}_{p})\right)
\Omega\left(  p^{N}\left\vert x-J\right\vert _{p}\right) \label{Eq_29}\\
-A_{JK}\left(  \boldsymbol{P}_{M}u\left(  x\right)  \right)  \Omega\left(
p^{N}\left\vert x-J\right\vert _{p}\right)  .\nonumber
\end{gather}
Now from (\ref{Eq_27A}) and (\ref{Eq_29}), we have%
\begin{equation}
\left\Vert \boldsymbol{P}_{M}\left(  \boldsymbol{L}_{JK}u(x)\right)
-\boldsymbol{L}_{JK}\left(  P_{M}u(x)\right)  \right\Vert _{\infty}\leq
A_{JK}\left\vert \text{Aver}(u-\boldsymbol{P}_{M}u;K+p^{N}\mathbb{Z}%
_{p})\right\vert . \label{Eq_30}%
\end{equation}
On the other hand,%
\begin{multline*}
\left\vert \text{Aver}(u-\boldsymbol{P}_{M}u;K+p^{N}\mathbb{Z}_{p})\right\vert
\leq p^{N}%
{\textstyle\int\limits_{K+p^{N}\mathbb{Z}_{p}}}
\left\vert u(y)-\boldsymbol{P}_{M}u\left(  y\right)  \right\vert dy\\
\leq\sup_{y\in K+p^{N}\mathbb{Z}_{p}}\left\vert u(y)-\boldsymbol{P}%
_{M}u\left(  y\right)  \right\vert \leq\left\Vert u-\boldsymbol{P}%
_{M}u\right\Vert _{\infty}\rightarrow0\text{ as }M\rightarrow\infty,
\end{multline*}
cf. Lemma \ref{Lemma0}.
\end{proof}

\subsection{Existence of good approximations}

\begin{theorem}
\label{Theorem3}Take $u_{0}\oplus v_{0}\in U\oplus U$. Let $u\oplus v$ be the
mild solution of (\ref{Eq_14}), and let $u^{\left(  M\right)  }\oplus
v^{\left(  M\right)  }$ be the mild solution of (\ref{Eq_15}) with initial
datum $u^{\left(  M\right)  }\left(  0\right)  \oplus v^{\left(  M\right)
}\left(  0\right)  =\left(  P_{M}\oplus P_{M}\right)  \left(  u_{0}\oplus
v_{0}\right)  $. Then%
\[
\lim_{M\rightarrow\infty}\sup_{0\leq t\leq\tau}\left\Vert u^{\left(  M\right)
}\left(  t\right)  \oplus v^{\left(  M\right)  }\left(  t\right)  -u\left(
t\right)  \oplus v\left(  t\right)  \right\Vert =0,
\]
where $\tau<\tau_{\max}$, and $\tau_{\max}$ is the maximal interval of
existence for the solution $u\left(  t\right)  \oplus v\left(  t\right)  $
with initial datum $u_{0}\oplus v_{0}$.
\end{theorem}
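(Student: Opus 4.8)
The plan is to recognize that the Cauchy problems (\ref{Eq_14}) and (\ref{Eq_15}) fit exactly into the abstract framework for approximating semilinear evolution equations developed in \cite[Chapter 5]{Milan}, so that Conditions A, B, C', together with the local Lipschitz estimate (\ref{Lipschitz_condition}) for the nonlinearity $(f,g)$, are precisely the hypotheses required by the corresponding convergence theorem. Conditions A and B are immediate from Lemma \ref{Lemma0}, Lemma \ref{Lemma1} and Remark \ref{Nota_Lemma1}; Condition C' was reduced, via (\ref{Eq_20})--(\ref{Eq_22}), to the consistency estimate $\lim_{M\to\infty}\left\Vert \boldsymbol{L}_M\boldsymbol{P}_M u-\boldsymbol{P}_M\boldsymbol{L}u\right\Vert _\infty=0$, which was just established in the preceding Lemma. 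Thus the theorem follows by invoking the approximation result of \cite{Milan}. I would nonetheless indicate the underlying mechanism, since it makes transparent why $\tau<\tau_{\max}$ is the natural threshold.

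First I would pass to the mild-solution (Duhamel) representation (\ref{Mild_sol_2}), valid for both $u\oplus v$ and $u^{\left( M\right) }\oplus v^{\left( M\right) }$, using the identification $X_M\hookrightarrow X_\infty$ and the identity (\ref{Eq_nota}), so that the nonlinearity acts consistently on both levels. The first step is the \emph{linear} convergence: stability (Condition B, the uniform bound $\left\Vert e^{\varepsilon t\boldsymbol{L}_M}\right\Vert \le 1$) together with consistency (Condition C') yields, by the classical Trotter--Kato approximation theory underlying \cite{Milan}, that $\boldsymbol{E}_M e^{\varepsilon t\boldsymbol{L}_M}\boldsymbol{P}_M u\to e^{\varepsilon t\boldsymbol{L}}u$ and $\boldsymbol{E}_M e^{\varepsilon dt\boldsymbol{L}_M}\boldsymbol{P}_M v\to e^{\varepsilon dt\boldsymbol{L}}v$ uniformly for $t$ in compact intervals. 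This controls the homogeneous terms $e^{\varepsilon t\boldsymbol{L}_\bullet}u_0^{\left( \bullet\right) }$ in (\ref{Mild_sol_2}).

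Next I would control the nonlinear contribution. Writing $w_M(t):=u^{\left( M\right) }(t)-u(t)$ and $z_M(t):=v^{\left( M\right) }(t)-v(t)$ (with $\boldsymbol{E}_M$ suppressed), I subtract the two Duhamel formulas. The difference splits into a \emph{consistency error}, coming from the linear semigroups applied to the initial data and to $(f,g)$ evaluated along the limit solution, and a \emph{propagation error} governed by the Lipschitz bound (\ref{Lipschitz_condition}):
\begin{equation}
\max\{\left\Vert w_M(t)\right\Vert _\infty,\left\Vert z_M(t)\right\Vert _\infty\}\le \eta_M(t)+L\int_0^t \max\{\left\Vert w_M(s)\right\Vert _\infty,\left\Vert z_M(s)\right\Vert _\infty\}\,ds,
\end{equation}
where $\sup_{0\le t\le\tau}\eta_M(t)\to0$ by the linear convergence above and by $\left\Vert \boldsymbol{P}_M u_0-u_0\right\Vert _\infty\to0$ (Lemma \ref{Lemma0}). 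Gronwall's inequality then gives $\sup_{0\le t\le\tau}\max\{\left\Vert w_M(t)\right\Vert _\infty,\left\Vert z_M(t)\right\Vert _\infty\}\le \big(\sup_{0\le t\le\tau}\eta_M(t)\big)e^{L\tau}\to0$, which is the claim.

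The main obstacle is not the Gronwall step but guaranteeing that (\ref{Lipschitz_condition}) applies throughout $[0,\tau]$: one must ensure that the approximants $u^{\left( M\right) }\oplus v^{\left( M\right) }$ \emph{exist} on the common interval $[0,\tau]$ for all large $M$ and remain inside $U\cap X_M\oplus U\cap X_M$. This is where $\tau<\tau_{\max}$ enters. On $[0,\tau]$ the limit solution has image in a compact subset $\mathcal{C}$ of the open set $U\oplus U$, so some fixed neighborhood of $\mathcal{C}$ is still contained in $U\oplus U$ and the Lipschitz constant $L$ can be taken uniform there; for $M$ large the consistency error keeps $u^{\left( M\right) }\oplus v^{\left( M\right) }$ trapped in this neighborhood, so it cannot exit $U\oplus U$ and its maximal interval of existence exceeds $\tau$. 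This trapping argument, together with the uniform $L$ on $\mathcal{C}$, is exactly what the approximation theorem of \cite[Chapter 5]{Milan} packages, and it closes the bootstrap.
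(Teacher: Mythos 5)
Your proposal is correct and follows essentially the same route as the paper: the paper likewise verifies Conditions A, B, C\textquoteright{} (with the consistency estimate supplied by the preceding lemma), writes down the two Duhamel representations, and then invokes the approximation theorem of Miklav\v{c}i\v{c} (Theorems 5.4.2 and 5.4.7 in \cite{Milan}). Your additional unpacking of what that theorem packages --- Trotter--Kato convergence of the linear semigroups, a Gronwall bootstrap on the Duhamel difference, and the trapping argument that keeps the approximants in $U\oplus U$ on $[0,\tau]$ for $\tau<\tau_{\max}$ --- is consistent with, and more explicit than, the paper's citation-level proof.
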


\begin{proof}
The result follows from Conditions A, B, C', by using the argument given in
\cite{Milan} for Theorem 5.4.7. For this reason, we just provide some details
of the proof. \ Notice that for $t\in\left[  0,\tau\right]  $, $M\geq N$, we
have
\[
\left\{
\begin{array}
[c]{l}%
u^{\left(  M\right)  }\left(  t\right)  =e^{t\varepsilon\boldsymbol{L}_{M}%
}P_{M}u_{0}+%
{\textstyle\int\nolimits_{0}^{t}}
e^{\left(  t-s\right)  \varepsilon\boldsymbol{L}_{M}}P_{M}f\left(  u^{\left(
M\right)  }\left(  s\right)  ,v^{\left(  M\right)  }\left(  s\right)  \right)
ds\\
\\
v^{\left(  M\right)  }\left(  t\right)  =e^{t\varepsilon d\boldsymbol{L}_{M}%
}P_{M}v_{0}+%
{\textstyle\int\nolimits_{0}^{t}}
e^{\left(  t-s\right)  \varepsilon d\boldsymbol{L}_{M}}P_{M}g\left(
u^{\left(  M\right)  }\left(  s\right)  ,v^{\left(  M\right)  }\left(
s\right)  \right)  ds,
\end{array}
\right.
\]%
\[
\left\{
\begin{array}
[c]{l}%
u\left(  t\right)  =e^{t\varepsilon\boldsymbol{L}}u_{0}+%
{\textstyle\int\nolimits_{0}^{t}}
e^{\left(  t-s\right)  \varepsilon\boldsymbol{L}}f\left(  u\left(  s\right)
,v\left(  s\right)  \right)  ds\\
\\
v\left(  t\right)  =e^{t\varepsilon d\boldsymbol{L}}v_{0}+%
{\textstyle\int\nolimits_{0}^{t}}
e^{\left(  t-s\right)  \varepsilon d\boldsymbol{L}}g\left(  u\left(  s\right)
,v\left(  s\right)  \right)  ds.
\end{array}
\right.
\]
The estimations for $\sup_{0\leq t\leq\tau}\left\Vert u^{\left(  M\right)
}\left(  t\right)  -u\left(  t\right)  \right\Vert _{\infty}$ and $\sup_{0\leq
t\leq\tau}\left\Vert v^{\left(  M\right)  }\left(  t\right)  -v\left(
t\right)  \right\Vert _{\infty}$ follow from Theorem 5.4.2 \ in \cite{Milan},
which is a consequence of Conditions A, B, C', by using the reasoning given in
the proof Theorem 5.4.7 in \cite{Milan}.
\end{proof}

\section{Replicas of reaction-diffusion systems on networks}

In the above section we establish that the $p$-adic continuous model
(\ref{Eq_14}), which we call the \textit{mean-field approximation} of the
original system (\ref{Eq_6}), can be very well approximated by discretization
(\ref{Eq_15}) for $M$ sufficiently large. The purpose of this section is to
study the relations between the solutions of systems (\ref{Eq_6}) and
(\ref{Eq_15}).

We recall that the matrix of operator $\boldsymbol{L}_{N}$ acting on $X_{N}$
is $\left[  A_{JI}-\gamma_{I}\delta_{JI}\right]  _{J,I\in G_{N}^{0}}$. We set
$\mathbb{A}_{N;M}:=\left[  p^{N-M}A_{JI}-\gamma_{I}\delta_{JI}\right]
_{J,I\in G_{N}^{0}}$.

\begin{lemma}
\label{Lemma3}The matrix $\mathbb{A}^{\left(  M\right)  }$ of the operator
$\boldsymbol{L}_{M}$ acting on $X_{M}$ is a diagonal-type matrix of the form%
\begin{equation}
\mathbb{A}^{\left(  M\right)  }=\left[
\begin{array}
[c]{ccccc}%
\mathbb{A}_{N;M} &  &  &  & \\
& \ddots &  &  & \\
&  & \mathbb{A}_{N;M} &  & \\
&  &  & \ddots & \\
&  &  &  & \mathbb{A}_{N;M}%
\end{array}
\right]  _{p^{M-N}\times p^{M-N}}, \label{Eq_Matrix_A_M}%
\end{equation}
after renaming the elements of the basis of $X_{M}$.
\end{lemma}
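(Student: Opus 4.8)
The plan is to compute the matrix of $\boldsymbol{L}_M$ directly in the natural basis of $X_M$ and then to exhibit the relabelling of this basis that puts the matrix in block form. Recall that $X_M$ has the basis $\{\Omega(p^M|x-I_j|_p)\}_{I_j\in G_N^M}$ and that $G_N^M=\bigsqcup_{I\in G_N^0}G_I^M$, each fibre $G_I^M=(I+p^N\mathbb{Z}_p)/p^M\mathbb{Z}_p$ containing exactly $p^{M-N}$ representatives. First I would fix an index set $C$ with $\#C=p^{M-N}$ and, for each vertex $I\in G_N^0$, a bijection $c\mapsto I^{(c)}$ from $C$ onto $G_I^M$; this labels every basis vector by a pair $(I,c)\in G_N^0\times C$, and it is the ``renaming'' referred to in the statement.

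The core computation is to apply $\boldsymbol{L}_M=\boldsymbol{L}\mid_{X_M}$ to a basis vector $\Omega(p^M|x-I^{(c)}|_p)$ through the integral representation (\ref{Eq_Op_T}) with kernel $J_N$. Splitting the integrand into the gain part $\varphi(y)$ and the loss part $\varphi(x)$, and using that the ball $I^{(c)}+p^M\mathbb{Z}_p$, of measure $p^{-M}$, is contained in $I+p^N\mathbb{Z}_p$, only the index $K=I$ survives in $J_N$; thus the gain part yields a multiple of $\sum_J A_{JI}\Omega(p^N|x-J|_p)$, while the loss part $\varphi(x)\int_{\mathcal{K}_N}J_N(x,y)\,dy$ contributes $\gamma_I\,\Omega(p^M|x-I^{(c)}|_p)$, exactly as the function $g$ in the proof of Lemma \ref{Lemma1}. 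I would then rewrite the coarse indicator via the subdivision identity (\ref{Eq_25}), $\Omega(p^N|x-J|_p)=\sum_{c'\in C}\Omega(p^M|x-J^{(c')}|_p)$, so that everything is expressed against the $(J,c')$-basis and the matrix entries can be read off.

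With the entries in hand I would group the basis vectors by their position index $c$: for fixed $c$ the subfamily $\{\Omega(p^M|x-I^{(c)}|_p)\}_{I\in G_N^0}$ is a copy of the vertex set of $\mathcal{G}$, that is, a copy of $X_N$. The factor $p^{N-M}$ produced by the gain term multiplies the adjacency entries $A_{JI}$, while the diagonal carries $\gamma_I$ from the loss term, which is precisely the entry pattern of $\mathbb{A}_{N;M}=[p^{N-M}A_{JI}-\gamma_I\delta_{JI}]_{J,I\in G_N^0}$. Collecting the $p^{M-N}$ values of $c$ would then display $\mathbb{A}^{(M)}$ as the announced block matrix $\mathrm{diag}(\mathbb{A}_{N;M},\dots,\mathbb{A}_{N;M})$, each block being a rescaled replica of the Laplacian of the original network (\ref{Eq_6}).

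The step I expect to be the main obstacle is the decoupling across the position index $c$. The gain term of $\boldsymbol{L}_M\Omega(p^M|x-I^{(c)}|_p)$ carries the factor $\sum_J A_{JI}\Omega(p^N|x-J|_p)=\sum_J A_{JI}\sum_{c'}\Omega(p^M|x-J^{(c')}|_p)$, which a priori spreads over every $c'$ and not only over $c'=c$; obtaining the block-diagonal form therefore hinges on reading $\mathbb{A}^{(M)}$ through the pairing of sub-balls that realises $p^{M-N}$ disjoint rescaled copies of $\mathcal{G}$ rather than their complete blow-up. Making this pairing explicit, and verifying that with it each basis vector contributes only to the block of its own index $c$, is the heart of the argument; once it is in place, the identification of each diagonal block with $\mathbb{A}_{N;M}$ and the matching of the constants $p^{N-M}$ and $\gamma_I$ are routine bookkeeping.
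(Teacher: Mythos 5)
Your computation of the matrix of $\boldsymbol{L}_{M}$ in the basis $\{\Omega(p^{M}|x-I_{j}|_{p})\}_{I_{j}\in G_{N}^{M}}$ agrees with the paper's: the gain term spreads $p^{N-M}A_{JI}$ over \emph{all} $p^{M-N}$ sub-balls of each target vertex $J$, and the loss term puts $\gamma_{I}$ on the diagonal, giving the matrix $[\,p^{N-M}A_{JI}-\gamma_{I}\delta_{J_{j}I_{j}}\,]_{J_{j},I_{j}\in G_{N}^{M}}$. The step you flag as ``the heart of the argument'' and then defer --- finding a pairing of sub-balls under which each basis vector contributes only to its own replica --- is a genuine gap, and it cannot be closed: no such pairing exists. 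A renaming of basis elements is a permutation $P$, and if $P^{-1}\mathbb{A}^{(M)}P$ were block diagonal with blocks $\mathbb{A}_{N;M}$, then, since $\boldsymbol{L}_{M}$ annihilates the constant function $\mathbf{1}=\sum_{I_{j}}\Omega(p^{M}|\cdot-I_{j}|_{p})$ and $P$ fixes the all-ones vector, each block would have to annihilate the all-ones vector of its own size; but $\mathbb{A}_{N;M}\mathbf{1}=(p^{N-M}-1)\,(\gamma_{J})_{J\in G_{N}^{0}}\neq 0$ whenever $M>N$ and $\mathcal{G}$ has an edge. Concretely, for the graph with two vertices and one edge, $p=2$, $M=N+1$, the true matrix has eigenvalues $0,-2,-1,-1$ (in agreement with Theorem \ref{Theorem4}: the $\mu_{I}$ together with the $-\gamma_{I}$), while two copies of $\mathbb{A}_{N;M}$ have eigenvalues $-1/2,-1/2,-3/2,-3/2$; no change of basis, let alone a relabelling, relates the two.

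The published proof makes exactly the leap you were suspicious of: it asserts that $p^{N-M}A_{JI}-\gamma_{I}\delta_{J_{j}I_{j}}$ is constant as $(J_{j},I_{j})$ ranges over $G_{J}^{M}\times G_{I}^{M}$ --- false on the diagonal blocks $J=I$ because of the Kronecker delta --- and then concludes block-diagonality, which would not follow even from constancy: a matrix that is constant on each block $G_{J}^{M}\times G_{I}^{M}$ is the $p^{M-N}$-fold all-ones blow-up of a $\#G_{N}^{0}\times\#G_{N}^{0}$ matrix, not a direct sum of $p^{M-N}$ copies of it. So your instinct to isolate the decoupling as the crux was correct; what is missing from your proposal is the realization that the obstacle is fatal to the statement as written. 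The accurate description of $\boldsymbol{L}_{M}$, consistent with your own computation, is the blow-up matrix $[\,p^{N-M}A_{JI}-\gamma_{I}\delta_{J_{j}I_{j}}\,]$, whose diagonalization is the one provided by Theorem \ref{Theorem4} restricted to $X_{M}$, not a collection of independent scaled replicas of the network.
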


\begin{proof}
We first note that kernel $J_{N}(x,y)$, see (\ref{Eq_Kernel_J_N}), can be
rewritten as%
\begin{gather*}
J_{N}(x,y)=\\
p^{N}%
{\textstyle\sum\limits_{J\in G_{N}^{0}}}
{\textstyle\sum\limits_{K\in G_{N}^{0}}}
{\textstyle\sum\limits_{J_{j}\in G_{J}^{M}}}
{\textstyle\sum\limits_{K_{j}\in G_{K}^{M}}}
A_{JK}\Omega\left(  p^{M}\left\vert x-J_{j}\right\vert _{p}\right)
\Omega\left(  p^{M}\left\vert y-K_{j}\right\vert _{p}\right)  \text{, }%
\end{gather*}
for $\left(  x,y\right)  \in\mathbb{Q}_{p}$. In order to compute the matrix
$\mathbb{A}^{\left(  M\right)  }$, it is necessary to compute
\[
\boldsymbol{L}_{M}\Omega\left(  p^{M}\left\vert x-I_{j}\right\vert
_{p}\right)  \text{ \ for \ }I_{j}\in G_{N}^{M}.
\]
We first compute%
\begin{gather}
\int\limits_{\mathcal{K}_{N}}\Omega\left(  p^{M}\left\vert y-I_{j}\right\vert
_{p}\right)  J_{N}(x,y)dy=\label{Eq_34}\\
p^{N}%
{\textstyle\sum\limits_{J\in G_{N}^{0}}}
{\textstyle\sum\limits_{J_{j}\in G_{J}^{M}}}
A_{JI}\Omega\left(  p^{M}\left\vert x-J_{j}\right\vert _{p}\right)
\int\limits_{\mathcal{K}_{N}}\Omega\left(  p^{M}\left\vert y-I_{j}\right\vert
_{p}\right)  dy\nonumber\\
=p^{N-M}%
{\textstyle\sum\limits_{J_{j}\in G_{N}^{M}}}
A_{JI}\Omega\left(  p^{M}\left\vert x-J_{j}\right\vert _{p}\right)  .\nonumber
\end{gather}
We now compute%
\begin{gather}
\Omega\left(  p^{M}\left\vert x-I_{j}\right\vert _{p}\right)  \int
\limits_{\mathcal{K}_{N}}J_{N}(x,y)dy=\label{Eq_35}\\
p^{N-M}\left\{
{\textstyle\sum\limits_{K\in G_{N}^{0}}}
{\textstyle\sum\limits_{K_{j}\in G_{K}^{M}}}
A_{IK}\right\}  \Omega\left(  p^{M}\left\vert x-I_{j}\right\vert _{p}\right)
=\nonumber\\
p^{N-M}\left\{
{\textstyle\sum\limits_{K\in G_{N}^{0}}}
p^{M-N}A_{IK}\right\}  \Omega\left(  p^{M}\left\vert x-I_{j}\right\vert
_{p}\right)  =\gamma_{I}\Omega\left(  p^{M}\left\vert x-I_{j}\right\vert
_{p}\right)  .\nonumber
\end{gather}
From (\ref{Eq_34})-(\ref{Eq_35}) we have%
\[
\boldsymbol{L}_{M}\Omega\left(  p^{M}\left\vert x-I_{j}\right\vert
_{p}\right)  =%
{\textstyle\sum\limits_{J_{j}\in G_{N}^{M}}}
\left[  p^{N-M}A_{JI}-\gamma_{I}\delta_{J_{j}I_{j}}\right]  \Omega\left(
p^{M}\left\vert x-J_{j}\right\vert _{p}\right)  .
\]
Consequently, the matrix of $\boldsymbol{L}_{M}$ is%
\begin{equation}
\left[  p^{N-M}A_{JI}-\gamma_{I}\delta_{J_{j}I_{j}}\right]  _{J_{j},I_{j}\in
G_{N}^{M}}. \label{Eq_matrix_T_M}%
\end{equation}
Given $I\in G_{N}^{0}$, which corresponds to the center of a ball of type
$I+p^{N}\mathbb{Z}_{p}$, there are $p^{M-N}$, $I_{j}$s in $G_{I}^{M}$, which
correspond to the centers of the balls $I_{j}+p^{M}\mathbb{Z}_{p}\subset
I+p^{N}\mathbb{Z}_{p}$, then given any pair $\left(  J_{j},I_{j}\right)  \in
G_{J}^{M}\times G_{I}^{M}$, the expression $p^{N-M}A_{JI}-\gamma_{I}%
\delta_{J_{j}I_{j}}$ is constant. More precisely, this value occurs
$p^{2\left(  M-N\right)  }$-times when $\left(  I_{j},J_{j}\right)  $ runs
through $G_{N}^{M}\times G_{N}^{M}$. This implies, that after renaming the
elements of the basis of $X_{M}$, the matrix of $\boldsymbol{L}_{M}$ can be
rewritten as (\ref{Eq_Matrix_A_M}).
\end{proof}

The matrix $\mathbb{A}^{\left(  M\right)  }$, see (\ref{Eq_Matrix_A_M}),
corresponds to a network constructed by using $p^{M-N}$ \textit{replicas of
the original network}, each of these replicas correspond to a network having a
diffusion operator of type
\[
\left[  p^{N-M}A_{JI}-\gamma_{I}\delta_{JI}\right]  _{J,I\in G_{N}^{0}%
}=p^{N-M}\left[  A_{JI}-p^{M-N}\gamma_{I}\delta_{JI}\right]  _{J,I\in
G_{N}^{0}},
\]
and the corresponding reaction-diffusion equation is
\begin{equation}
\left\{
\begin{array}
[c]{l}%
\frac{\partial u^{\left(  N\right)  }\left(  x,t\right)  }{\partial
t}=f(u^{\left(  N\right)  }\left(  x,t\right)  ,v^{\left(  N\right)  }\left(
x,t\right)  )+\varepsilon^{\prime}\boldsymbol{L}_{N,\lambda}u^{\left(
N\right)  }\left(  x,t\right) \\
\\
\frac{\partial v^{\left(  N\right)  }\left(  x,t\right)  }{\partial
t}=g(u^{\left(  N\right)  }\left(  x,t\right)  ,v^{\left(  N\right)  }\left(
x,t\right)  )+\varepsilon^{\prime}d\boldsymbol{L}_{N,\lambda}v^{\left(
N\right)  }\left(  x,t\right)  ,
\end{array}
\right.  \label{Eq_36}%
\end{equation}
where $\varepsilon^{\prime}=p^{N-M}\varepsilon$, $\lambda=p^{M-N}$, where
$\boldsymbol{L}_{N,\lambda}:X_{N}\rightarrow X_{N}$ is defined in
(\ref{Eq_ope_L_lambda_XM}). By using the results of Section
\ref{Section_selsimilarity_XM} and Remark \ref{Nota_replica}, we say that
system (\ref{Eq_36}) is a scaled replica of the original system (\ref{Eq_6}).

In conclusion, system (\ref{Eq_14}) is the `limit' of system (\ref{Eq_15})
when $M$ tends to infinity. This means that both systems have solutions in the
time interval $\left[  0,\tau_{0}\right)  $, and these two solutions are
numerically very close for $M$ sufficiently large (Theorem \ref{Theorem3}). In
turn, any solution of system (\ref{Eq_15}) corresponds to $p^{M-N}$ solutions
of $p^{M-N}$ systems of type (\ref{Eq_35}), each of them is a scaled version
(a replica) of the original system (\ref{Eq_6}).

\section{\label{Section_spectrum_L}The spectrum of operator $\boldsymbol{L}$}

\begin{remark}
From now on, we assume that $\mathcal{G}$ is an unoriented graph, with a
symmetric adjacency matrix $\left[  A_{JI}\right]  _{J,I\in G_{N}^{0}}$ such
\ that its diagonal contains only zeros. In this case, the spectrum of the
discrete Laplacian $\left[  L_{JI}\right]  _{J,I\in G_{N}^{0}}$ is
well-understood, see e.g. \cite[Section 4.1]{Van Mieghem}. Since the adjacency
matrix $\left[  A_{JI}\right]  _{J,I\in G_{N}^{0}}$ is symmetric, the
eigenvalues, $\mu_{I}$, $I\in G_{N}^{0}$, of $\left[  L_{JI}\right]  _{J,I\in
G_{N}^{0}}$ are non-positive and $\max_{I\in G_{N}^{0}}\left\{  \mu
_{I}\right\}  =0$. We denote by $mult(\mu_{I})$ the multiplicity of the
eigenvalue $\mu_{I}$. We set $X_{\bullet}\otimes\mathbb{C}$ for the
complexification of $X_{\bullet}$. In particular, $X_{\infty}\otimes
\mathbb{C=}C\left(  \mathcal{K}_{N},\mathbb{C}\right)  $, with the $L^{\infty
}$-norm. Then $\boldsymbol{L}:X_{\infty}\otimes\mathbb{C\rightarrow}X_{\infty
}\otimes\mathbb{C}$ is a linear bounded operator. We set $\boldsymbol{L}%
_{M}:=\boldsymbol{L}\mid_{X_{M}\otimes\mathbb{C}}$.
\end{remark}

\begin{lemma}
\label{Lemma5}The operator $\boldsymbol{L}$ has a unique extension to
$L^{2}(\mathcal{K}_{N},\mathbb{C})$ as a bounded linear operator.
\end{lemma}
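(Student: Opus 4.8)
The plan is to invoke the bounded‑linear‑transformation principle. Since $\mathcal{K}_{N}$ is compact (hence of finite Haar measure), the space $\mathcal{D}(\mathcal{K}_{N},\mathbb{C})\subset C(\mathcal{K}_{N},\mathbb{C})$ is dense in $L^{2}(\mathcal{K}_{N},\mathbb{C})$, so $C(\mathcal{K}_{N},\mathbb{C})$ is itself dense there. Consequently it suffices to establish a single estimate, namely that $\boldsymbol{L}$ is bounded with respect to the $\left\Vert \cdot\right\Vert _{2}$-norm on the dense subspace $C(\mathcal{K}_{N},\mathbb{C})$ on which it is already defined; the extension to $L^{2}(\mathcal{K}_{N},\mathbb{C})$ is then obtained and is automatically unique.

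To carry this out I would split $\boldsymbol{L}$ into its integral and multiplicative parts. Setting $g(x):=\int_{\mathcal{K}_{N}}J_{N}(x,y)\,dy=\sum_{I\in G_{N}^{0}}\gamma_{I}\Omega\left(  p^{N}\left\vert x-I\right\vert _{p}\right)$ exactly as in the proof of \lemref{Lemma1}, one has, for $\varphi\in C(\mathcal{K}_{N},\mathbb{C})$,
\[
\boldsymbol{L}\varphi(x)=\int_{\mathcal{K}_{N}}\varphi(y)J_{N}(x,y)\,dy-g(x)\varphi(x)=:\boldsymbol{L}_{1}\varphi(x)-\boldsymbol{L}_{2}\varphi(x).
\]
For the multiplicative part the bound is immediate: $\left\Vert \boldsymbol{L}_{2}\varphi\right\Vert _{2}=\left\Vert g\varphi\right\Vert _{2}\leq\left\Vert g\right\Vert _{\infty}\left\Vert \varphi\right\Vert _{2}\leq\gamma_{\mathcal{G}}\left\Vert \varphi\right\Vert _{2}$. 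For the integral part the key observation is that the kernel $J_{N}(x,y)$ is a test function on $\mathcal{K}_{N}\times\mathcal{K}_{N}$, being a finite linear combination of products of characteristic functions of balls; it is therefore bounded with compact support, hence square-integrable, and the Cauchy–Schwarz inequality yields the Hilbert–Schmidt bound
\[
\left\Vert \boldsymbol{L}_{1}\varphi\right\Vert _{2}^{2}=\int_{\mathcal{K}_{N}}\left\vert \int_{\mathcal{K}_{N}}J_{N}(x,y)\varphi(y)\,dy\right\vert ^{2}dx\leq\left(  \int_{\mathcal{K}_{N}}\int_{\mathcal{K}_{N}}\left\vert J_{N}(x,y)\right\vert ^{2}dx\,dy\right)  \left\Vert \varphi\right\Vert _{2}^{2}.
\]
Combining the two gives $\left\Vert \boldsymbol{L}\varphi\right\Vert _{2}\leq C\left\Vert \varphi\right\Vert _{2}$ for all $\varphi\in C(\mathcal{K}_{N},\mathbb{C})$, with $C:=\left(  \int_{\mathcal{K}_{N}}\int_{\mathcal{K}_{N}}\left\vert J_{N}(x,y)\right\vert ^{2}dx\,dy\right)^{1/2}+\gamma_{\mathcal{G}}$.

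By the bounded-linear-transformation theorem, $\boldsymbol{L}$ then admits a unique bounded linear extension to $L^{2}(\mathcal{K}_{N},\mathbb{C})$. In fact this extension is given by the very same integral formula, which makes sense for every $\varphi\in L^{2}(\mathcal{K}_{N},\mathbb{C})$ because $L^{2}(\mathcal{K}_{N},\mathbb{C})\subset L^{1}(\mathcal{K}_{N},\mathbb{C})$ (finite measure) and both $J_{N}$ and $g$ lie in $L^{\infty}$; uniqueness comes solely from density of $C(\mathcal{K}_{N},\mathbb{C})$. There is no serious obstacle in this argument: the only point meriting care is the square-integrability of the kernel $J_{N}$, which is transparent from its explicit form, together with the observation that the multiplicative term $g\varphi$ does not destroy $L^{2}$-boundedness. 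I note in passing that the Hilbert–Schmidt estimate for $\boldsymbol{L}_{1}$ also shows the extension is compact, which will be relevant for the spectral analysis in \thmref{Theorem4}.
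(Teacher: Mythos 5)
Your proof is correct and follows essentially the same route as the paper's: decompose $\boldsymbol{L}$ into the integral part and the multiplication-by-$g$ part, bound each in the $\left\Vert\cdot\right\Vert_{2}$-norm via Cauchy--Schwarz and the explicit form of $J_{N}$, and extend by density of $C(\mathcal{K}_{N},\mathbb{C})$ (the paper merely computes the explicit constant $2\sqrt{\sum_{J}\gamma_{J}^{2}}$ instead of quoting the Hilbert--Schmidt bound). One caution on your closing aside: the Hilbert--Schmidt estimate gives compactness only of $\boldsymbol{L}_{1}$, not of $\boldsymbol{L}=\boldsymbol{L}_{1}-\boldsymbol{L}_{2}$, since the multiplication operator $\boldsymbol{L}_{2}$ is not compact on $L^{2}(\mathcal{K}_{N},\mathbb{C})$, so compactness of the full extension does not follow from this estimate alone.
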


\begin{proof}
We first establish the following assertion:

\textbf{Claim 1. }Let $f\in C\left(  \mathcal{K}_{N},\mathbb{C}\right)  $,
then $\left\Vert \boldsymbol{L}f\right\Vert _{2}\leq2\sqrt{\sum_{J\in
G_{N}^{0}}\gamma_{J}^{2}}\left\Vert f\right\Vert _{2}$.

\noindent By using that
\[
\boldsymbol{L}f(x)=%
{\textstyle\int\limits_{\mathcal{K}_{N}}}
f\left(  y\right)  J_{N}(x,y)dy-f\left(  x\right)
{\textstyle\int\limits_{\mathcal{K}_{N}}}
J_{N}(x,y)dy,
\]
we get%
\[
\left\Vert \boldsymbol{L}f\right\Vert _{2}\leq\left\Vert
{\textstyle\int\limits_{\mathcal{K}_{N}}}
f\left(  y\right)  J_{N}(x,y)dy\right\Vert _{2}+\left\Vert f\left(  x\right)
{\textstyle\int\limits_{\mathcal{K}_{N}}}
J_{N}(x,y)dy\right\Vert _{2}=:\mathcal{I}_{1}+\mathcal{I}_{2}.
\]
We consider first the term $\mathcal{I}_{1}$. By using Cauchy-Schwarz
inequality,%
\begin{gather*}
\mathcal{I}_{1}^{2}=%
{\textstyle\int\limits_{\mathcal{K}_{N}}}
\left\vert
{\textstyle\int\limits_{\mathcal{K}_{N}}}
f\left(  y\right)  J_{N}(x,y)dy\right\vert ^{2}dx\leq%
{\textstyle\int\limits_{\mathcal{K}_{N}}}
\left\{
{\textstyle\int\limits_{\mathcal{K}_{N}}}
\left\vert f\left(  y\right)  \right\vert J_{N}(x,y)dy\right\}  ^{2}dx\\
=%
{\textstyle\int\limits_{\mathcal{K}_{N}}}
\left\{  p^{N}%
{\textstyle\sum\limits_{J,I\in G_{N}^{0}}}
A_{JI}\Omega\left(  p^{N}\left\vert x-J\right\vert _{p}\right)
{\textstyle\int\limits_{\mathcal{K}_{N}}}
\left\vert f\left(  y\right)  \right\vert \Omega\left(  p^{N}\left\vert
y-I\right\vert _{p}\right)  dy\right\}  ^{2}dx\\
\leq%
{\textstyle\int\limits_{\mathcal{K}_{N}}}
\left\{  \left\Vert f\right\Vert _{2}p^{\frac{N}{2}}%
{\textstyle\sum\limits_{J,I\in G_{N}^{0}}}
A_{JI}\Omega\left(  p^{N}\left\vert x-J\right\vert _{p}\right)  \right\}
^{2}dx\\
=\left\Vert f\right\Vert _{2}^{2}p^{N}%
{\textstyle\sum\limits_{J}}
\gamma_{J}^{2}%
{\textstyle\int\limits_{\mathcal{K}_{N}}}
\Omega\left(  p^{N}\left\vert x-J\right\vert _{p}\right)  dx=\left\Vert
f\right\Vert _{2}^{2}\sum_{J\in G_{N}^{0}}\gamma_{J}^{2}.
\end{gather*}
We now consider $\mathcal{I}_{2}$:%
\begin{gather*}
\mathcal{I}_{2}^{2}=%
{\textstyle\int\limits_{\mathcal{K}_{N}}}
\left\vert f\left(  x\right)  \right\vert ^{2}\left\{
{\textstyle\int\limits_{\mathcal{K}_{N}}}
J_{N}(x,y)dy\right\}  ^{2}dx\\
=%
{\textstyle\int\limits_{\mathcal{K}_{N}}}
\left\vert f\left(  x\right)  \right\vert ^{2}\left\{
{\textstyle\sum\limits_{J\in G_{N}^{0}}}
\gamma_{J}\text{ }\Omega\left(  p^{N}\left\vert x-J\right\vert _{p}\right)
\right\}  ^{2}dx\\
=%
{\textstyle\sum\limits_{J\in G_{N}^{0}}}
\gamma_{J}^{2}%
{\textstyle\int\limits_{\mathcal{K}_{N}}}
\left\vert f\left(  x\right)  \right\vert ^{2}\Omega\left(  p^{N}\left\vert
x-J\right\vert _{p}\right)  dx\leq\left\Vert f\right\Vert _{2}^{2}%
{\textstyle\sum\limits_{J\in G_{N}^{0}}}
\gamma_{J}^{2}.
\end{gather*}
The announced result follows from Claim 1, by using the fact that
$\mathcal{D}\left(  \mathcal{K}_{N},\mathbb{C}\right)  \subset C\left(
\mathcal{K}_{N},\mathbb{C}\right)  $ is dense in $L^{2}(\mathcal{K}%
_{N},\mathbb{C})$, see e.g. \cite[Proposition 4.3.3]{Alberio et al}.
\end{proof}

\begin{remark}
\label{Nota_basi_1} The eigenvalues of $\boldsymbol{L}\mid_{X_{N}%
\otimes\mathbb{C}}=\boldsymbol{L}_{N}$ are exactly the eigenvalues of the
matrix $\left[  L_{JI}\right]  _{J,I\in G_{N}^{0}}$, which are $\mu_{I}\leq0$,
$I\in G_{N}^{0}$ with $\max_{I\in G_{N}^{0}}\left\{  \mu_{I}\right\}  =0$. We
denote the eigenfunctions of $\left[  L_{JI}\right]  _{J,I\in G_{N}^{0}}$ as
$\varphi_{I}$, $I\in G_{N}^{0}$.

Let $\left[  c_{J}^{I}\right]  _{J\in G_{N}^{0}}$ be an eigenvector
corresponding to $\mu_{I}$, by identifying it with the function
\[
\varphi_{I}\left(  x\right)  :=\sum\limits_{J\in G_{N}^{0}}c_{J}^{I}%
\Omega\left(  p^{N}\left\vert x-J\right\vert _{p}\right)  \in X_{N}%
\otimes\mathbb{C}\text{, }c_{J}^{I}\in\mathbb{C}\text{,}%
\]
and by using that $X_{N}\otimes\mathbb{C\hookrightarrow}X_{\infty}%
\otimes\mathbb{C}$ and that $\boldsymbol{L}:X_{N}\otimes\mathbb{C\rightarrow
}X_{N}\otimes\mathbb{C}$, we have%
\[
\left\{
\begin{array}
[c]{c}%
\varphi_{I}\in X_{\infty}\otimes\mathbb{C};\\
\\
\boldsymbol{L}\varphi_{I}=\mu_{I}\varphi_{I}.
\end{array}
\right.
\]
The $\varphi_{I}$s form a $\mathbb{C}$-vector space of dimension mult$\left(
\mu_{I}\right)  $.
\end{remark}

\begin{remark}
\label{Nota_th_Kozyrev}We now recall that the set of functions $\left\{
\Psi_{rnj}\right\}  $ defined as%
\begin{equation}
\Psi_{rnj}\left(  x\right)  =p^{\frac{-r}{2}}\chi_{p}\left(  p^{r-1}jx\right)
\Omega\left(  \left\vert p^{r}x-n\right\vert _{p}\right)  , \label{Eq_40}%
\end{equation}
where $r\in\mathbb{Z}$, $j\in\left\{  1,\cdots,p-1\right\}  $, and $n$ runs
through a fixed set of representatives of $\mathbb{Q}_{p}/\mathbb{Z}_{p}$, is
an orthonormal basis of $L^{2}(\mathbb{Q}_{p})$. Furthermore,
\begin{equation}
\int_{\mathbb{Q}_{p}}\Psi_{rnj}\left(  x\right)  dx=0. \label{Eq_41}%
\end{equation}
This result is due to Kozyrev see e.g. \cite[Theorem 3.29]{KKZuniga} or
\cite[Theorem 9.4.2]{Alberio et al}.
\end{remark}

\begin{remark}
\label{Nota_basis_2}Any function of the the form $\Psi_{rnj}\left(  x\right)
$ supported in $\mathcal{K}_{N}=\bigsqcup\nolimits_{I\in G_{N}^{0}}\left(
I+p^{N}\mathbb{Z}_{p}\right)  $ satisfies
\begin{equation}
\boldsymbol{L}\Psi_{rnj}\left(  x\right)  =-\gamma_{I}\Psi_{rnj} \label{Eq_43}%
\end{equation}
for some $I\in G_{N}^{0}$, $j\in\left\{  1,\cdots,p-1\right\}  $.
\end{remark}

\begin{theorem}
\label{Theorem4}The elements of the set:%
\[
\left\{  \mu_{I};I\in G_{N}^{0}\setminus\left\{  I_{0}\right\}  \right\}
{\textstyle\bigsqcup}
\left\{  -\gamma_{I};I\in G_{N}^{0}\right\}  \subset\left(  -\infty,0\right)
,
\]
where $\left\{  \mu_{I}\right\}  _{I\in G_{N}^{0}\setminus\left\{
I_{0}\right\}  }$ are the non-zero eigenvalues of matrix $\left[
L_{JI}\right]  _{J,I\in G_{N}^{0}}$, are the non-zero eigenvalues of
$\boldsymbol{L}$. The corresponding eigenfunctions are%
\begin{equation}
\left\{  \frac{\varphi_{I}}{\left\Vert \varphi_{I}\right\Vert _{2}};I\in
G_{N}^{0}\right\}
{\textstyle\bigsqcup}
\left\{  \Psi_{rnj};\text{ supp}\Psi_{rnj}\subset\mathcal{K}_{N}\right\}  ,
\label{Eq_Basis}%
\end{equation}
where the functions $\varphi_{I}$, $\Psi rnj$ are defined in Remarks
\ref{Nota_basi_1}, \ref{Nota_basis_2}, respectively. Furthermore, the set
(\ref{Eq_Basis}) is an orthonormal basis of $L^{2}(\mathcal{K}_{N}%
,\mathbb{C})$, and
\begin{equation}
L^{2}(\mathcal{K}_{N},\mathbb{C})=X_{N}\otimes\mathbb{C}\text{ }%
\mathbb{\oplus}\text{ }\mathcal{L}_{0}^{2}(\mathcal{K}_{N},\mathbb{C}),
\label{Eq_50}%
\end{equation}
where
\[
\mathcal{L}_{0}^{2}(\mathcal{K}_{N},\mathbb{C}):=\left\{  f\in L^{2}%
(\mathcal{K}_{N},\mathbb{C});\int_{\mathcal{K}_{N}}fdx=0\right\}  .
\]

\end{theorem}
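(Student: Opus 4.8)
The plan is to apply the spectral theorem for compact self-adjoint operators and then to match the abstract eigendata against the two explicit families already produced in Remarks \ref{Nota_basi_1} and \ref{Nota_basis_2}. Compactness on $L^{2}(\mathcal{K}_{N},\mathbb{C})$ is \lemref{Lemma6}, so the first point to settle is self-adjointness. Since $\mathcal{G}$ is now undirected, the adjacency matrix is symmetric and hence the kernel $J_{N}(x,y)$ of (\ref{Eq_Kernel_J_N}) satisfies $J_{N}(x,y)=J_{N}(y,x)$. Writing $\boldsymbol{L}f(x)=\int_{\mathcal{K}_{N}}f(y)J_{N}(x,y)\,dy-g(x)f(x)$ with $g(x)=\sum_{I\in G_{N}^{0}}\gamma_{I}\Omega(p^{N}|x-I|_{p})$ real-valued, the multiplication term is visibly self-adjoint, and Fubini together with $J_{N}(x,y)=J_{N}(y,x)$ gives $\langle\boldsymbol{L}f,h\rangle=\langle f,\boldsymbol{L}h\rangle$ for $f,h\in L^{2}(\mathcal{K}_{N},\mathbb{C})$. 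Thus $\boldsymbol{L}$ is compact and self-adjoint, and the spectral theorem furnishes an orthonormal basis of real eigenfunctions whose eigenvalues are real and can accumulate only at $0$.

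Next I would identify the eigenfunctions. The first family arises from the finite matrix $[L_{JI}]_{J,I\in G_{N}^{0}}$: each eigenvector, regarded as $\varphi_{I}\in X_{N}\otimes\mathbb{C}$ as in Remark \ref{Nota_basi_1}, satisfies $\boldsymbol{L}\varphi_{I}=\mu_{I}\varphi_{I}$; because $\boldsymbol{L}\mid_{X_{N}\otimes\mathbb{C}}$ is symmetric these may be taken orthonormal after dividing by $\|\varphi_{I}\|_{2}$, and they span $X_{N}\otimes\mathbb{C}$. The second family is the Kozyrev wavelets of (\ref{Eq_43}): by Remark \ref{Nota_basis_2} one has $\boldsymbol{L}\Psi_{-N(p^{-N}I)j}=-\gamma_{I}\Psi_{-N(p^{-N}I)j}$, and these are orthonormal since they sit inside Kozyrev's orthonormal system (Remark \ref{Nota_th_Kozyrev}). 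The two families are mutually orthogonal: each $\Psi_{-N(p^{-N}I)j}$ is supported on the single ball $I+p^{N}\mathbb{Z}_{p}$ and has zero integral there by (\ref{Eq_41}), while every $\varphi_{I}$ is constant on that ball, so the inner product reduces to a constant multiple of $\int\Psi_{-N(p^{-N}I)j}\,dx=0$.

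The main obstacle is completeness — establishing that the span of (\ref{Eq_Basis}) is dense and yields the decomposition (\ref{Eq_50}). I would approach it through the orthogonal splitting $L^{2}(\mathcal{K}_{N},\mathbb{C})=\bigoplus_{I\in G_{N}^{0}}L^{2}(I+p^{N}\mathbb{Z}_{p},\mathbb{C})$ and, on each ball, separate the constants (whose joint span over all balls is $X_{N}\otimes\mathbb{C}$, carrying the $\mu_{I}$) from the mean-zero part $\mathcal{L}_{0}^{2}(I+p^{N}\mathbb{Z}_{p},\mathbb{C})$. Here the delicate point surfaces: since $J_{N}(x,y)$ is constant on each product of balls, a direct computation shows that every mean-zero function on $I+p^{N}\mathbb{Z}_{p}$ is killed by the integral part of $\boldsymbol{L}$ and is therefore a $-\gamma_{I}$-eigenfunction, so the entire subspace $\mathcal{L}_{0}^{2}(I+p^{N}\mathbb{Z}_{p},\mathbb{C})$ lies in the $-\gamma_{I}$-eigenspace. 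Consequently a genuinely complete orthonormal eigenbasis of $L^{2}(\mathcal{K}_{N},\mathbb{C})$ must use all Kozyrev wavelets $\Psi_{rnj}$ supported in $\mathcal{K}_{N}$ (those with $r\leq-N$, for which the same computation gives eigenvalue $-\gamma_{I}$), not only the scale $r=-N$ functions listed in (\ref{Eq_Basis}); equivalently, the multiplicity of $-\gamma_{I}$ equals $\dim\mathcal{L}_{0}^{2}(I+p^{N}\mathbb{Z}_{p},\mathbb{C})$. This is exactly the step I expect to require the most care, since such an infinite-dimensional eigenspace attached to a nonzero eigenvalue has to be reconciled with the compactness asserted in \lemref{Lemma6}; I would prove completeness by invoking that the ball-indicators together with the full Kozyrev family supported in $\mathcal{K}_{N}$ form an orthonormal basis of $L^{2}(\mathcal{K}_{N},\mathbb{C})$.
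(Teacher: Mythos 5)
Your argument is correct up to and including the identification of the eigenvalues, the eigenfunctions, and the mutual orthogonality of the two families, and at the completeness step it genuinely diverges from the paper's proof --- in a way that exposes a defect in the statement rather than in your reasoning. The paper's own proof rests on Remark \ref{Nota_basis_2}, which asserts that the scale $r=-N$ wavelets $\Psi_{-N(p^{-N}I)j}$ are \emph{all} the Kozyrev wavelets supported in $\mathcal{K}_{N}$, and then claims that their closed span is $\mathcal{L}_{0}^{2}(\mathcal{K}_{N},\mathbb{C})$, so that $f-\frac{1}{Vol(\mathcal{K}_{N})}\int_{\mathcal{K}_{N}}f\,dx$ expands in these finitely many functions. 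That cannot be right: the set (\ref{Eq_Basis}) has only $p\cdot\#G_{N}^{0}$ elements (its closed span is $X_{N+1}\otimes\mathbb{C}$), while $L^{2}(\mathcal{K}_{N},\mathbb{C})$ and $\mathcal{L}_{0}^{2}(\mathcal{K}_{N},\mathbb{C})$ are infinite dimensional; the wavelets $\Psi_{rnj}$ with $r<-N$ supported in sub-balls of $\mathcal{K}_{N}$ are missing. Your computation that the locally constant kernel $J_{N}$ annihilates every mean-zero function on $I+p^{N}\mathbb{Z}_{p}$, so that all of $\mathcal{L}_{0}^{2}(I+p^{N}\mathbb{Z}_{p},\mathbb{C})$ is a $-\gamma_{I}$-eigenspace, is correct, and your per-ball splitting into constants plus local mean-zero parts is the right replacement for (\ref{Eq_50}), whose two summands as literally defined are neither orthogonal nor in direct sum once $\#G_{N}^{0}>1$.

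Your closing worry should not be smoothed over: a nonzero eigenvalue of infinite multiplicity is incompatible with compactness, and indeed \lemref{Lemma6} fails. Its proof breaks at the Cauchy--Schwarz step $\left\Vert \mathcal{F}^{-1}\left(  \varphi_{m}\right)  \right\Vert _{1}\leq\left(  \sqrt{\int_{\mathcal{K}_{N}}dx}\right)  \left\Vert \mathcal{F}^{-1}\left(  \varphi_{m}\right)  \right\Vert _{2}$, which treats $\mathcal{F}^{-1}\left(  \varphi_{m}\right)$ as if it were supported in $\mathcal{K}_{N}$. Structurally, $\boldsymbol{L}$ is a finite-rank integral operator minus multiplication by $g(x)=\sum_{I\in G_{N}^{0}}\gamma_{I}\Omega\left(  p^{N}\left\vert x-I\right\vert _{p}\right)$, and the multiplication part acts as $\gamma_{I}$ times the identity on the infinite-dimensional subspace $L^{2}(I+p^{N}\mathbb{Z}_{p},\mathbb{C})$, so $\boldsymbol{L}$ is not compact on $L^{2}$ as soon as some $\gamma_{I}>0$. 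The upshot is that the theorem cannot be proved as stated, and your proposal correctly locates both points where it must be amended: take all wavelets with $r\leq-N$ in (\ref{Eq_Basis}) (equivalently, accept that $-\gamma_{I}$ has infinite multiplicity) and drop the compactness claim on $L^{2}$. The eigenvalue list $\left\{  \mu_{I}\right\}  \sqcup\left\{  -\gamma_{I}\right\}$ itself, which is what the Turing analysis of Section \ref{Section_Turing_criteria} actually uses, survives your corrected argument.
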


\begin{proof}
By Remarks (\ref{Nota_basi_1}), (\ref{Nota_basis_2}), it is sufficient to show
(\ref{Eq_50}). This assertion follows from Propositions 1, 2 in
\cite{Zuniga-Galindo-PNAS}.
\end{proof}

\section{\label{Section_Turing_criteria}Turing Instability}

\subsection{Turing Criteria}

We now consider a homogeneous steady state $\left(  u_{0},v_{0}\right)  $ of
(\ref{Eq_14}), which is a nonnegative solution of
\begin{equation}
f(u,v)=g(u,v)=0. \label{EQ_6}%
\end{equation}
Since $u$, $v$ are real-valued functions, to study the linear stability of
$\left(  u_{0},v_{0}\right)  $, we can use the classical results. Following
Turing, in the absence of any spatial variation, the homogeneous state must be
linearly stable. With no spatial variation $u$, $v$ satisfy%
\begin{equation}
\left\{
\begin{array}
[c]{c}%
\frac{\partial u}{\partial t}(x,t)=f\left(  u,v\right) \\
\\
\frac{\partial v}{\partial t}(x,t)=g\left(  u,v\right)  .
\end{array}
\right.  \label{EQ_7}%
\end{equation}
Notice that (\ref{EQ_7}) \ is an ordinary system of differential equations in
$\mathbb{R}^{2}$.

Now, for $\delta>0$ sufficiently small and $\left(  u_{0},v_{0}\right)  $ as
in (\ref{EQ_6}), we define%
\begin{multline*}
U_{\delta,u_{0}}\oplus U_{\delta,v_{0}}=\\
\left\{  u_{1}\oplus u_{2}\in C\left(  \mathcal{K}_{N},\mathbb{R}\right)
\oplus C\left(  \mathcal{K}_{N},\mathbb{R}\right)  ;\left\Vert u_{1}%
-u_{0}\right\Vert _{\infty}<\delta\text{, }\left\Vert v_{1}-v_{0}\right\Vert
_{\infty}<\delta\right\}  .
\end{multline*}
Then, the Cauchy \ problem:%
\begin{equation}
\left\{
\begin{array}
[c]{l}%
u\oplus v\in C^{1}\left(  \left[  0,\tau_{0}\right)  ,U_{\delta,u_{0}}\oplus
U_{\delta,v_{0}}\right)  ;\\
\\
\frac{\partial}{\partial t}\left[
\begin{array}
[c]{l}%
u\left(  t\right) \\
\\
v\left(  t\right)
\end{array}
\right]  =\left[
\begin{array}
[c]{l}%
f(u\left(  t\right)  ,v\left(  t\right)  )\\
\\
g(u\left(  t\right)  ,v\left(  t\right)  )
\end{array}
\right]  +\varepsilon\boldsymbol{L}\mathbb{D}\left[
\begin{array}
[c]{l}%
u\left(  t\right) \\
\\
v\left(  t\right)
\end{array}
\right]  \text{;}\\
\\
u\left(  0\right)  \oplus v\left(  0\right)  \in U_{\delta,u_{0}}\oplus
U_{\delta,v_{0}},
\end{array}
\right.  \label{EQ_2}%
\end{equation}
where%
\[
\mathbb{D}{\scriptsize =}\left[
\begin{array}
[c]{ccc}%
1 &  & 0\\
&  & \\
0 &  & d
\end{array}
\right]  ,
\]
has a classical solution, cf. Theorem \ref{Theorem2} and Lemma
\ref{Lemma_mild_sol}. Our goal is to give an asymptotic profile as $t$ tends
infinity of this mild solution (the Turing instability criteria). We linearize
system (\ref{EQ_2}) about the steady state $\left(  u_{0},v_{0}\right)  $, by
setting%
\begin{equation}
\boldsymbol{w}=\left[
\begin{array}
[c]{c}%
w_{1}\\
\\
w_{2}%
\end{array}
\right]  =\left[
\begin{array}
[c]{c}%
u-u_{0}\\
\\
v-v_{0}%
\end{array}
\right]  . \label{EQ_8}%
\end{equation}
By using the fact that $f$ and $g$ are differentiable, and assuming that
$\left\Vert \boldsymbol{w}\right\Vert =\left\Vert w_{1}\oplus w_{2}\right\Vert
$ is sufficiently small, then (\ref{EQ_7}) can be approximated as%
\begin{equation}
\frac{\partial\boldsymbol{w}}{\partial t}(x,t)=\mathbb{J}\boldsymbol{w}\text{,
} \label{EQ_9}%
\end{equation}
where \
\[
\mathbb{J}_{u_{0},v_{0}}=:\mathbb{J}=\left[
\begin{array}
[c]{ccc}%
\frac{\partial f}{\partial u} &  & \frac{\partial f}{\partial v}\\
&  & \\
\frac{\partial g}{\partial u} &  & \frac{\partial g}{\partial v}%
\end{array}
\right]  \left(  u_{0},v_{0}\right)  =:\left[
\begin{array}
[c]{ccc}%
f_{u_{0}} &  & f_{v_{0}}\\
&  & \\
g_{u_{0}} &  & g_{v_{0}}%
\end{array}
\right]  .
\]
We now look for solutions of (\ref{EQ_9}) of the form%
\begin{equation}
\boldsymbol{w}\left(  t;\lambda\right)  =e^{\lambda t}\boldsymbol{w}_{0}.
\label{EQ_10}%
\end{equation}
By substituting (\ref{EQ_10}) in (\ref{EQ_9}), the eigenvalues $\lambda$ are
the solutions of
\[
\det\left(  \mathbb{J}-\lambda\mathbb{I}\right)  =0,
\]
i.e.
\begin{equation}
\lambda^{2}-\left(  \mathit{Tr}\mathbb{J}\right)  \lambda+\det\mathbb{J}=0.
\label{EQ_10AB}%
\end{equation}
Consequently%
\begin{equation}
\lambda=\frac{1}{2}\left\{  \mathit{Tr}\mathbb{J}\text{ }\pm\sqrt{\left(
\mathit{Tr}\mathbb{J}\right)  ^{2}-4\det\mathbb{J}}\right\}  . \label{EQ_10AA}%
\end{equation}
The steady state $\boldsymbol{w}=\boldsymbol{0}$ is linearly stable if
$\operatorname{Re}\lambda<0$, this last condition is guaranteed if
\begin{equation}
\mathit{Tr}\mathbb{J}<0\text{ \ and \ }\det\mathbb{J}>0. \label{EQ_10A}%
\end{equation}
We now consider the full reaction-ultradiffusion system (\ref{EQ_2}). We
linearize it about the steady state, which with (\ref{EQ_8}) is
$\boldsymbol{w}=\boldsymbol{0}:\boldsymbol{=}\left[
\begin{array}
[c]{c}%
0\\
0
\end{array}
\right]  $, to get%
\begin{equation}
\left\{
\begin{array}
[c]{l}%
u\oplus v\in C^{1}\left(  \left[  0,\tau\right)  ,U_{\delta,u_{0}}\oplus
U_{\delta,v_{0}}\right)  \text{;}\\
\\
\frac{\partial}{\partial t}\boldsymbol{w}(x,t)=\left(  \mathbb{J}%
+\varepsilon\boldsymbol{L}\mathbb{D}\right)  \boldsymbol{w}(x,t)\text{, }%
t\in\left[  0,\tau\right)  \text{;}\\
\\
u\left(  0\right)  \oplus v\left(  0\right)  \in U_{\delta,u_{0}}\oplus
U_{\delta,v_{0}},
\end{array}
\right.  \label{EQ_12}%
\end{equation}
where $\mathbb{J}+\varepsilon\boldsymbol{L}\mathbb{D}$ is a strongly
continuous semigroup on $C\left(  \mathcal{K}_{N},\mathbb{R}\right)  \oplus
C\left(  \mathcal{K}_{N},\mathbb{R}\right)  $, see e.g. \cite[Corollary
5.1.3]{Milan}, then (\ref{EQ_12}) has a mild solution, which is differentiable
and unique, see e.g. \cite[Theorems 5.1.2, 4.3.1]{Milan}. Furthermore,
(\ref{EQ_12}), has also a unique solution, when $\boldsymbol{L}$ is considered
as an operator on $L^{2}\left(  \mathcal{K}_{N},\mathbb{C}\right)  $, cf.
Lemma \ref{Lemma5}, for this reason, we can use the orthonormal basis given in
Theorem \ref{Theorem4} to solve (\ref{EQ_12}) in $L^{2}\left(  \mathcal{K}%
_{N},\mathbb{C}\right)  $, by using the separation of variables method, then,
the solution of the original problem is exactly the real part of the solution
of (\ref{EQ_12}) in $L^{2}\left(  \mathcal{K}_{N},\mathbb{C}\right)  $.

To solve the system (\ref{EQ_12}) in $L^{2}\left(  \mathcal{K}_{N}%
,\mathbb{C}\right)  $, we first consider the following eigenvalue pro\-blem:%
\begin{equation}
\left\{
\begin{array}
[c]{l}%
\boldsymbol{L}\mathbb{D}\boldsymbol{w}_{\kappa}(x)=\kappa\boldsymbol{w}%
_{\kappa}(x)\\
\\
\boldsymbol{w}_{\kappa}\in L^{2}\left(  \mathcal{K}_{N},\mathbb{C}\right)
\oplus L^{2}\left(  \mathcal{K}_{N},\mathbb{C}\right)  ,
\end{array}
\right.  \label{EQ_14}%
\end{equation}
which has a solution $\boldsymbol{w}_{\kappa}=w_{\kappa,1}\oplus w_{\kappa,2}$
due to Theorem \ref{Theorem4}, where
\[
w_{\kappa,1},w_{\kappa,2}\in\left\{  \frac{\varphi_{I}}{\left\Vert \varphi
_{I}\right\Vert _{2}};I\in G_{N}^{0}\right\}
{\textstyle\bigsqcup}
\left\{  \Psi_{rnj};\text{supp}\Psi_{rnj}\subset\mathcal{K}_{N}\right\}  .
\]
We look for an solution of type
\begin{equation}
\boldsymbol{w}(x,t)=\sum\limits_{rnj}\boldsymbol{a}_{rnj}e^{\lambda t}%
\Psi_{rnj}+\sum\limits_{I\in G_{N}^{0}}\boldsymbol{b}_{I}\varphi
_{I}\label{EQ_15A}%
\end{equation}
where the vectors $\boldsymbol{a}_{rnj}$, $\boldsymbol{b}_{I}$ are determined
by the Fourier expansion of the initial conditions. Substituting
(\ref{EQ_15A}) with (\ref{EQ_14}) in (\ref{EQ_12}), we obtain that the
existence of a non-trivial solution $\boldsymbol{w}(x,t)$ requires that the
$\lambda$s satisfy%
\begin{equation}
\det\left(  \lambda\mathbb{I}-\mathbb{J}-\varepsilon\kappa\mathbb{D}\right)
=0,\label{EQ_16A}%
\end{equation}
i.e.,
\begin{equation}
\lambda^{2}-\left\{  \left(  1+d\right)  \varepsilon\kappa+\mathit{Tr}%
\mathbb{J}\right\}  \lambda+h\left(  \kappa\right)  =0,\label{EQ_16}%
\end{equation}
where%
\begin{equation}
h\left(  \kappa\right)  :=\varepsilon^{2}d\kappa^{2}+\varepsilon\kappa\left(
df_{u_{0}}+g_{v_{0}}\right)  +\det\mathbb{J}.\label{EQ_17}%
\end{equation}
Notice that condition (\ref{EQ_16A}) becomes condition (\ref{EQ_10AB}) when
$\kappa=0$. By using the classical reasoning, see e.g. \cite[Chapter
2]{Murray}, one obtains the necessary conditions (T1)-(T4) given in Theorem
\ref{Theorem5}.

The analysis of the sufficient conditions is similar to the classical case,
see e.g. \cite[Chapter 2]{Murray}. One shows that the condition $h\left(
\kappa\right)  <0$, for some $\kappa\neq0$, requires that
\begin{equation}
\frac{\left(  df_{u_{0}}+g_{v_{0}}\right)  ^{2}}{4d}>\det\mathbb{J}.
\label{EQ_21}%
\end{equation}
Furthermore, there exists a critical diffusion $d_{c}$, which is given as an
appropriate root of
\begin{equation}
f_{u_{0}}^{2}d_{c}^{2}+2\left(  2f_{v_{0}}g_{u_{0}}-f_{u_{0}}g_{v_{0}}\right)
d_{c}+g_{v_{0}}^{2}=0. \label{EQ_26B}%
\end{equation}
For $d>d_{c}$ model ((\ref{EQ_2})) exhibits Turing instability, while for
$d<d_{c}$ no. When $d>d_{c}$, there exists a range of unstable of positive
wavenumbers \ $\kappa_{1}<\kappa<\kappa_{2}$, where $\kappa_{1}$, $\kappa_{2}$
are the zeros of $h\left(  \kappa\right)  =0$, see (\ref{EQ_17}) and
(\ref{EQ_21}):
\[
\kappa_{2}=\frac{-1}{2d\varepsilon}\left\{  \left(  df_{u_{0}}+g_{v_{0}%
}\right)  -\sqrt{\left(  df_{u_{0}}+g_{v_{0}}\right)  ^{2}-4d\det\mathbb{J}%
}\right\}  <0,
\]%
\[
\kappa_{1}=\frac{-1}{2d\varepsilon}\left\{  \left(  df_{u_{0}}+g_{v_{0}%
}\right)  +\sqrt{\left(  df_{u_{0}}+g_{v_{0}}\right)  ^{2}-4d\det\mathbb{J}%
}\right\}  <0.
\]
Notice that, within the unstable range, $\operatorname{Re}\lambda\left(
\kappa\right)  >0$ has a maximum for the wavenumber $\kappa_{\text{min}}%
=\frac{-\left(  df_{u_{0}}+g_{v_{0}}\right)  }{2\varepsilon d_{c}}$.

In the solution $\boldsymbol{w}\left(  x,t\right)  $ given by (\ref{EQ_15A}),
the dominant contributions as $t$ increases are the modes for which
$\operatorname{Re}\lambda\left(  \kappa\right)  >0$ since the other modes tend
to zero exponentially, thus, if
\[
\left\{  \kappa\in\sigma\left(  L\right)  \smallsetminus\left\{  0\right\}
;\kappa_{1}<\kappa<\kappa_{2}\right\}  \neq\emptyset,
\]
then
\begin{gather}
\boldsymbol{w}\left(  x,t\right)  \sim\sum\limits_{\kappa_{1}<\kappa
<\kappa_{2}}\sum\limits_{I}A_{I\kappa}e^{\lambda t}\Omega\left(
p^{N}\left\vert x-I\right\vert _{p}\right)  +\label{EQ_expansion}\\
\sum\limits_{\kappa_{1}<\kappa<\kappa_{2}}\text{ }\sum\limits_{I}%
{\displaystyle\sum\limits_{rnj}}
B_{rnj}e^{\lambda t}p^{\frac{r}{2}}\cos\left(  \left\{  p^{-r-1}jx\right\}
_{p}\right)  \Omega\left(  p^{r}\left\vert x-I\right\vert _{p}\right)
+\nonumber\\
\sum\limits_{\kappa_{1}<\kappa<\kappa_{2}}\sum\limits_{I}%
{\displaystyle\sum\limits_{rnj}}
C_{rnj}e^{\lambda t}p^{\frac{r}{2}}\sin\left(  \left\{  p^{-r-1}jx\right\}
_{p}\right)  \Omega\left(  p^{r}\left\vert x-I\right\vert _{p}\right)  \text{
}\nonumber
\end{gather}
for $t\rightarrow+\infty$, where $\lambda=\lambda\left(  \kappa\right)  $,
$r=r(I,\kappa)$, $n=(I,\kappa)$.

Expansion (\ref{EQ_expansion}) implies that%
\begin{equation}
\int\nolimits_{0}^{\boldsymbol{\tau}}\left\Vert f\left(  u\left(  t\right)
,v\left(  t\right)  \right)  \right\Vert _{\infty}dt<\infty\text{ \ and
\ }\int\nolimits_{0}^{\boldsymbol{\tau}}\left\Vert g\left(  u\left(  t\right)
,v\left(  t\right)  \right)  \right\Vert _{\infty}dt<\infty,
\label{EQ_integrals}%
\end{equation}
for any $\boldsymbol{\tau}<\infty$. Which implies that $\tau_{\max}=+\infty$,
for any initial data in $U_{\delta,u_{0}}\oplus U_{\delta,v_{0}}$. Notice that
in (\ref{EQ_integrals}), we can replace $\left\Vert \cdot\right\Vert _{\infty
}$ by $\left\Vert \cdot\right\Vert _{2}$. We now formulate the Turing
instability criteria for our $p$-adic continuous model.

\begin{remark}
In the case of reaction-diffusion systems on $X_{M}$, $M\geq N$, the Turing
pattern (\ref{EQ_expansion}) does not contain the sine and cosine functions.
\end{remark}

\begin{theorem}
\label{Theorem5}Consider the reaction-diffusion system (\ref{EQ_12}). The
steady state $(u_{0},v_{0})$ is linearly unstable (Turing unstable) if the
following conditions hold:

\noindent(T1) $\mathit{Tr}\mathbb{J=}f_{u_{0}}+g_{v_{0}}<0$;

\noindent(T2) $\det\mathbb{J}=f_{u_{0}}g_{v_{0}}-f_{v_{0}}g_{u_{0}}>0$;

\noindent(T3) $df_{u_{0}}+g_{v_{0}}>0$;

\noindent(T4) the derivatives $f_{u_{0}}$ and $g_{v_{0}}$ must have opposite signs;

\noindent(T5) $\left(  df_{u_{0}}+g_{v_{0}}\right)  ^{2}-4d\left(  f_{u_{0}%
}g_{v_{0}}-f_{v_{0}}g_{u_{0}}\right)  >0$;

\noindent(T6) $\left\{  \kappa\in\sigma\left(  \boldsymbol{L}\right)
\smallsetminus\left\{  0\right\}  ;\kappa_{1}<\kappa<\kappa_{2}\right\}
\neq\emptyset$;

Furthermore in (\ref{EQ_2}), we can take $\tau_{0}=+\infty$, for any initial
data in $U_{\delta,u_{0}}\oplus U_{\delta,v_{0}}$.
\end{theorem}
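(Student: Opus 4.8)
The plan is to follow the classical linear-stability (Turing) analysis of \cite[Chapter 2]{Murray}, with the single essential modification that the continuum of Fourier wavenumbers is replaced by the discrete, real, non-positive spectrum of $\boldsymbol{L}$ supplied by Theorem \ref{Theorem4}. Since $\mathcal{G}$ is now undirected with symmetric adjacency matrix, $\boldsymbol{L}$ extends to a compact operator on $L^{2}(\mathcal{K}_{N},\mathbb{C})$ (Lemma \ref{Lemma5} and the remark following it) with the orthonormal eigenbasis (\ref{Eq_Basis}) and eigenvalues $\kappa\in\sigma(\boldsymbol{L})\subset(-\infty,0]$. First I would expand both components of a perturbation $\boldsymbol{w}=w_{1}\oplus w_{2}$ in this eigenbasis; because $\mathbb{D}=\mathrm{diag}(1,d)$ acts componentwise and each basis element is an eigenfunction of $\boldsymbol{L}$, the linearized evolution (\ref{EQ_12}) decouples into a family of $2\times 2$ constant-coefficient systems indexed by $\kappa$, with coefficient matrix $\mathbb{J}+\varepsilon\kappa\mathbb{D}$. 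This reduces linear instability of $(u_{0},v_{0})$ to finding a single $\kappa\in\sigma(\boldsymbol{L})$ for which $\mathbb{J}+\varepsilon\kappa\mathbb{D}$ has an eigenvalue $\lambda$ with $\operatorname{Re}\lambda>0$.

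The next step is the sign analysis of the dispersion relation (\ref{EQ_16}), with $h(\kappa)$ as in (\ref{EQ_17}). I would observe that, under (T1), the trace term $(1+d)\varepsilon\kappa+\mathit{Tr}\,\mathbb{J}$ is strictly negative for every $\kappa\le 0$, so the sum of the two roots is negative and a root with positive real part can arise only through a \emph{real} root, forcing the product $h(\kappa)<0$. Thus instability is equivalent to $h$ taking a negative value on $\sigma(\boldsymbol{L})\setminus\{0\}$. Here (T1)--(T2) give $h(0)=\det\mathbb{J}>0$ and stability of the homogeneous mode; (T3) places the vertex $\kappa_{\min}=-(df_{u_{0}}+g_{v_{0}})/(2\varepsilon d)$ of the upward-opening parabola $h$ at a strictly negative abscissa, i.e. inside the admissible spectral range; and (T5), which is exactly (\ref{EQ_21}) rewritten, guarantees $h(\kappa_{\min})=\det\mathbb{J}-(df_{u_{0}}+g_{v_{0}})^{2}/(4d)<0$, so that $h$ has two distinct real roots $\kappa_{1}<\kappa_{2}<0$ and $h<0$ precisely on $(\kappa_{1},\kappa_{2})$. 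Condition (T4) merely records the activator--inhibitor sign pattern $f_{u_{0}}>0$, $g_{v_{0}}<0$ that is consistent with (T1) and (T3).

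At this point the genuinely $p$-adic ingredient enters: in the continuous theory the band $(\kappa_{1},\kappa_{2})$ automatically contains wavenumbers, whereas here $\sigma(\boldsymbol{L})$ is the finite set $\{\mu_{I}\}\sqcup\{-\gamma_{I}\}$ of Theorem \ref{Theorem4}, which need not meet the band. Condition (T6) is exactly the requirement that it does; for any such $\kappa$ one has $h(\kappa)<0$, the quadratic acquires a root $\lambda_{+}>0$, and the corresponding eigenmode in the expansion (\ref{EQ_15A}) grows like $e^{\lambda_{+}t}$, exhibiting a growing perturbation and establishing Turing instability. I expect the main obstacle to be not any single estimate but the bookkeeping that ties the spectral decomposition of the \emph{coupled} operator $\boldsymbol{L}\mathbb{D}$ to the scalar dispersion relation, together with the correct handling of the sign convention $\kappa\le 0$ (the $p$-adic Laplacian eigenvalues play the role of $-k^{2}$ in the classical theory); once this is in place, (T1)--(T5) are the verbatim classical inequalities and (T6) is the new discreteness hypothesis.

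Finally, for the assertion $\tau_{0}=+\infty$ I would use that the perturbation is the globally defined finite exponential sum (\ref{EQ_15A}), whose asymptotic profile is (\ref{EQ_expansion}); since this is a finite combination of terms $e^{\lambda t}$ times bounded spatial functions and $f,g\in C^{1}$, the integral bounds (\ref{EQ_integrals}) hold on every bounded time interval. Invoking the maximal-interval criterion recorded for (\ref{Eq_18}) then rules out the blow-up alternatives $\int_{0}^{\tau_{\max}}\|f\|_{\infty}\,dt=\infty$ and $\int_{0}^{\tau_{\max}}\|g\|_{\infty}\,dt=\infty$, so that $\tau_{\max}=+\infty$ for every initial datum in $U_{\delta,u_{0}}\oplus U_{\delta,v_{0}}$, as claimed, via Theorem \ref{Theorem2} and Lemma \ref{Lemma_mild_sol}.
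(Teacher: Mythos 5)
Your proposal is correct and follows essentially the same route as the paper: linearization about $(u_{0},v_{0})$, expansion of the perturbation in the eigenbasis of $\boldsymbol{L}$ from Theorem \ref{Theorem4}, reduction to the quadratic dispersion relation (\ref{EQ_16})--(\ref{EQ_17}) whose sign analysis yields (T1)--(T5), with (T6) supplying the genuinely discrete requirement that $\sigma(\boldsymbol{L})\smallsetminus\{0\}$ meet the band $(\kappa_{1},\kappa_{2})$, and the finite exponential expansion giving (\ref{EQ_integrals}) and hence $\tau_{\max}=+\infty$. The only cosmetic difference is that you decouple the two components directly via the diagonal structure of $\mathbb{D}$ rather than writing the eigenvalue problem (\ref{EQ_14}) for $\boldsymbol{L}\mathbb{D}$, which leads to the same dispersion relation.
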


\begin{remark}
\label{Nota_Theorem5}Theorem \ref{Theorem5} is also valid for
reaction-diffusion systems on $X_{M}$, for $M\geq N$. In this case condition
\ (T6) should be replaced by $\left\{  \kappa\in\sigma\left(  \boldsymbol{L}%
_{M}\right)  \smallsetminus\left\{  0\right\}  ;\kappa_{1}<\kappa<\kappa
_{2}\right\}  \neq\emptyset$. Now since%
\[
\left\{  \kappa\in\sigma\left(  \boldsymbol{L}_{M}\right)  \smallsetminus
\left\{  0\right\}  ;\kappa_{1}<\kappa<\kappa_{2}\right\}  \subset\left\{
\kappa\in\sigma\left(  \boldsymbol{L}\right)  \smallsetminus\left\{
0\right\}  ;\kappa_{1}<\kappa<\kappa_{2}\right\}  ,
\]
the existence of Turing patterns in $X_{M}$ implies the existence of Turing
patterns in $X_{\infty}$,\ but the converse is not necessarily true such as
the examples presented in the next section show.
\end{remark}

\subsection{Turing patterns for the $p$-adic Brusselator}

In this section, we study the existence of Turing patterns for the $p$-adic
Brusselator in $X_{M}$, see Section \ref{Sec_Brusselator}. There is only a
homogeneous steady state $u_{0}=A$, $v_{0}=\frac{B}{A}$, the Jacobian matrix
of $\left(  f,g\right)  $ at $\left(  u_{0},v_{0}\right)  $ is
\[
\mathbb{J}=\left[
\begin{array}
[c]{ll}%
B-1 & A^{2}\\
-B & -A^{2}%
\end{array}
\right]  ,
\]
and thus $\mathit{Tr}\mathbb{J}=B-1-A^{2}$ and $\det\mathbb{J}=A^{2}$.
Condition (T1) in Theorem \ref{Theorem5} implies that
\begin{equation}
B<1+A^{2}. \label{Eq_54}%
\end{equation}
The conditions (T2) holds true immediately. By taking $B>1$, condition (T4)
holds true. Condition (T3) implies that%
\begin{equation}
B>\frac{A^{2}}{d}+1. \label{Eq_55}%
\end{equation}
Notice that $d>1$. Condition (T6) requires\ that
\begin{equation}
\left(  d(B-1)-A^{2}\right)  ^{2}-4dA^{2}>0\Leftrightarrow d(B-1)>A^{2}%
+2\sqrt{d}A. \label{Eq_55AA}%
\end{equation}
To verify condition (T6), we compute $\kappa_{1}$, $\kappa_{2}$ as
\begin{align*}
\kappa_{2,1}  &  =\frac{-1}{2d\varepsilon}\left\{  \left(  df_{u_{0}}%
+g_{v_{0}}\right)  \mp\sqrt{\left(  df_{u_{0}}+g_{v_{0}}\right)  ^{2}%
-4d\det\mathbb{J}}\right\} \\
&  =\frac{1}{2d\varepsilon}\left\{  \left(  A^{2}-d(B-1\right)  )\pm
\sqrt{\left(  A^{2}-d(B-1\right)  )^{2}-4dA^{2}}\right\} \\
&  =\frac{1}{2d\varepsilon}\left\{  \left(  A^{2}-d(B-1\right)  )\mp\left(
A^{2}-d(B-1\right)  )\sqrt{1-\frac{4dA^{2}}{\left(  A^{2}-d(B-1\right)  )^{2}%
}}\right\} \\
&  =\frac{\left(  A^{2}-d(B-1\right)  )}{2d\varepsilon}\left\{  1\mp
\sqrt{1-\frac{4dA^{2}}{\left(  A^{2}-d(B-1\right)  )^{2}}}\right\} \\
&  =\frac{\left(  \left(  \frac{A}{\sqrt{d}}\right)  ^{2}-(B-1)\right)
}{2\varepsilon}\left\{  1\mp\sqrt{1-\left(  \frac{2\left(  \frac{A}{\sqrt{d}%
}\right)  }{\left(  \frac{A}{\sqrt{d}}\right)  ^{2}-(B-1)}\right)  ^{2}%
}\right\}  ,
\end{align*}
where we used that $\left(  B-1\right)  d-A^{2}>0$, see (\ref{Eq_55}). Now, we
assume that $d$ is sufficiently large, so we can \ use the Taylor expansion:%
\[
\sqrt{1-\left(  \frac{2\left(  \frac{A}{\sqrt{d}}\right)  }{\left(  \frac
{A}{\sqrt{d}}\right)  ^{2}-(B-1)}\right)  ^{2}}\approx1-\frac{1}{2}\left(
\frac{2\left(  \frac{A}{\sqrt{d}}\right)  }{\left(  \frac{A}{\sqrt{d}}\right)
^{2}-(B-1)}\right)  ^{2},
\]
then%
\[
\kappa_{2}\approx\frac{-A^{2}}{\varepsilon d(B-1)}\text{ and \ }\kappa
_{1}\approx-\frac{(B-1)}{\varepsilon}.
\]
Now since $\kappa_{1}<\kappa_{2}$, i.e. $\frac{d\left(  B-1\right)  }{A^{2}%
}>\frac{1}{B-1}$ $\Leftrightarrow\left(  B-1\right)  >\frac{A^{2}}{d\left(
B-1\right)  }$and by (\ref{Eq_55}), we have $B-1>\max\left\{  \frac{A^{2}}%
{d},\frac{A^{2}}{d\left(  B-1\right)  }\right\}  $. We pick
\begin{equation}
\frac{A^{2}}{d}>\frac{A^{2}}{d\left(  B-1\right)  }\Leftrightarrow B>2.
\label{Eq_56}%
\end{equation}
Now (\ref{Eq_56}) implies that $\kappa_{2}=\frac{-A^{2}}{\varepsilon
d(B-1)}>\frac{-A^{2}}{\varepsilon d}$, \ and consequently, when verifying
condition (T6) we can check the condition:%
\begin{equation}
\left\{  \kappa\in\sigma\left(  \boldsymbol{L}_{\cdot}\right)  \smallsetminus
\left\{  0\right\}  ;-\frac{(B-1)}{\varepsilon}<\kappa<\frac{-A^{2}%
}{\varepsilon d}\right\}  \neq\emptyset. \label{Eq_58}%
\end{equation}
Finally, under the hypothesis $B_{>}2$, condition (\ref{Eq_55}) is satisfied
if $A^{2}<d$. In turn, since $A^{2}+2\sqrt{d}A<3d$, (\ref{Eq_55AA}) is
satisfied if $B>4$.

\subsubsection{\label{Section_complete_graphs}Complete graphs}

A complete graph $K_{n}$ is a connected graph on $n$ vertices \ where all
vertices are of degree $n-1$. A complete graph has $\frac{n\left(  n-1\right)
}{2}$\ edges. The eigenvalues of the Laplacian $\left[  L_{JI}\right]
_{J,I\in G_{N}^{0}}$ are $\mu_{I_{0}}=0$, $\mu_{I_{j}}$ $=-n$ for $1\leq j\leq
n-1$, see e.g. \cite[Section 5.1]{Van Mieghem}. In the case $\boldsymbol{L}%
_{M}$ condition (\ref{Eq_58}) becomes
\[
\frac{A^{2}}{d}<n\varepsilon<B-1.
\]
This last condition is satisfied if $1<n\varepsilon<3$, since we have not
imposed conditions on $\varepsilon$, we can pick $\varepsilon$ satisfying
$\frac{1}{n}<\varepsilon<\frac{3}{n}$.\ In this case, there exist Turing
patterns in $X_{N}$, $X_{M}$ with $M>N$ and in $X_{\infty}$.

In $X_{\infty}$, $\sigma\left(  \boldsymbol{L}\right)  \smallsetminus\left\{
0\right\}  =\left\{  -n\right\}
{\textstyle\bigsqcup}
\left\{  -n+1\right\}  $. Then condition (\ref{Eq_58}) becomes%
\[
\frac{A^{2}}{d}<n\varepsilon<B-1\text{ and }\frac{A^{2}}{d}<\left(
n-1\right)  \varepsilon<B-1.
\]
These two conditions cannot be satisfied simultaneously \ for some
$\varepsilon>0$. We pick $\frac{3}{n}<\varepsilon<\frac{3}{n-1}$, then there
exists a Turing Pattern in $X_{\infty}$ but no in $X_{M}$ with $M\geq N$.

\bigskip

\end{document}